\theoremstyle{thmstyleone}%
\numberwithin{equation}{section}
\newtheorem{theorem}{Theorem}[section]%  meant for continuous numbers
\newtheorem{proposition}[theorem]{Proposition}%
\newtheorem{corollary}{Corollary}[section]
\theoremstyle{thmstyletwo}%
\newtheorem{example}{Example}%
\newtheorem{remark}[theorem]{Remark}%
\theoremstyle{thmstylethree}%
\newtheorem{definition}[theorem]{Definition}%
\begin{document}

\title[Article Title]{Curvature of left-invariant complex Finsler metric on Lie groups
}

%%=============================================================%%
%% Prefix	-> \pfx{Dr}
%% GivenName	-> \fnm{Joergen W.}
%% Particle	-> \spfx{van der} -> surname prefix
%% FamilyName	-> \sur{Ploeg}
%% Suffix	-> \sfx{IV}
%% NatureName	-> \tanm{Poet Laureate} -> Title after name
%% Degrees	-> \dgr{MSc, PhD}
%% \author*[1,2]{\pfx{Dr} \fnm{Joergen W.} \spfx{van der} \sur{Ploeg} \sfx{IV} \tanm{Poet Laureate}
%%                 \dgr{MSc, PhD}}\email{iauthor@gmail.com}
%%=============================================================%%

\author{\fnm{Kuankuan} \sur{Luo}}\email{luokk@stu.xmu.edu.cn}
\author{\fnm{Wei} \sur{Xiao}}\email{wxiaomath@126.com}
\author{\fnm{Chunping} \sur{Zhong}}\email{zcp@xmu.edu.cn}
%\author[2,3]{\fnm{Second} \sur{Author}}\email{iiauthor@gmail.com}
%\equalcont{These authors contributed equally to this work.}

%\author[1,2]{\fnm{Third} \sur{Author}}\email{iiiauthor@gmail.com}
%\equalcont{These authors contributed equally to this work.}

\affil{\orgdiv{School of Mathematical Sciences}, \orgname{Xiamen
University},  \city{Xiamen}, \postcode{361005}, \country{China}}

%\affil[2]{\orgdiv{Department}, \orgname{Organization}, \orgaddress{\street{Street}, \city{City}, \postcode{10587}, \state{State}, \country{Country}}}

%\affil[3]{\orgdiv{Department}, \orgname{Organization}, \orgaddress{\street{Street}, \city{City}, \postcode{610101}, \state{State}, \country{Country}}}

%%==================================%%
%% sample for unstructured abstract %%
%%==================================%%

\abstract{

 Let $ G $ be a connected Lie group  with real Lie algebra $ \mathfrak{g}$. Suppose $G$  is also a complex manifold. We obtain explicit holomorphic sectional and bisectional curvature formulas  of  left-invariant strongly pseudoconvex complex Finsler metrics $F$ on $G$ in terms of the complex Lie algebra $\mathfrak{g}^{1,0}$; we also obtain  a  necessary and sufficient condition for $F$  to be a K\"ahler-Finsler metric and  a weakly K\"ahler-Finsler metric, respectively.
As an application, we obtain the rigidity result: if $F$ is a left-invariant  strongly pseudoconvex complex Finsler metric on a complex Lie group $G$, then $F$ must be a complex Berwald metric with vanishing holomorphic bisectional curvature; moreover, $F$ is a K\"ahler-Berwald metric iff $G$ is an Abelian complex Lie group.
}

\keywords{Lie group;\ left-invariant;\ complex Finsler metric;\ holomorphic sectional and bisectional curvature;\ K\"ahler-Berwald metric.}

%%\pacs[JEL Classification]{D8, H51}

\pacs[MSC Classification]{53C30, 53C60}

\maketitle

\section{Introduction and main results}

As  S.-S. Chern pointed out, Finsler geometry is just Riemannian geometry without quadratic restriction \cite{ChernSSquadraticrestriction}.
Thus complex Finsler geometry  is Hermitian geometry without Hermitian quadratic restriction.

In the geometric function theory of several complex variables, intrinsic metrics such as the Kobayashi metric and the  Carath\'eodory  metric are natural complex Finsler metrics. In general however, they are only upper semi-continuous  \cite{AbateAndPatrizio,KobayashiMetric} so that they do not admit to study from the viewpoint of differential geometry.
In \cite{Lempert1,Lempert2}, Lempert showed that
on strongly convex bounded domains $D\subset\subset\mathbb{C}^n$, the Kobayashi metric coincides with the Carath\'eodory metric and they are smooth outside of the zero section of the holomorphic tangent bundle $T^{1,0}D$.
Moreover, they are weakly K\"ahler-Finsler metrics with holomorphic sectional curvature $-4$.
In 2023,  Zhong \cite{ZhongResults} explicitly constructed a family
of holomorphically invariant metrics of non-Hermitian quadratic on the unit polydisk in $\mathbb{C}^n(n\geq 2)$, which are proved to be K\"ahler-Berwald metrics in the sense of Abate and Patrizio \cite{AbateAndPatrizio, Aikou-b}.
Furthermore Zhong \cite{zhong2025characterization} proved that on the classical domains, every holomorphically invariant strongly pseudoconvex complex Finsler metric is necessarily a K\"ahler-Berwald metric and it enjoys  very similar holomorphic sectional curvature property as that of the Bergman metric.

The study of left-invariant metrics on Lie groups is a central and fruitful topic in differential geometry. In Riemannian geometry, Milnor's seminal work \cite{JMilnor} elegantly demonstrated that the curvature of a left-invariant Riemannian metric on a Lie group can be reformulated in terms of its Lie algebra structure.
His derivation of a famous algebraic criterion for non-negative sectional curvature has served as a paradigm for subsequent research. It is also important to study left-invariant Finsler metrics. For instance,  Bao and  Shen  \cite{bao2002finsler} examined the existence of Finsler metrics with constant positive
flag curvature on the Lie group $S^3$, while Latifi \cite{latifi2013existence} studied bi-invariant Finsler metrics on compact Lie groups.
In \cite{HuangLB2}, Huang constructed a family of left-invariant Finsler metrics on $S^3$ with constant Ricci curvature but non-constant flag curvature. Furthermore,
 Huang \cite{HuangLB2} proved that on any non-commutative nilpotent Lie group, the Ricci curvature can be positive, negative, or zero in different directions,
thereby refuting a related conjecture of  Chern.

In Hermitian geometry, Yang \cite{YangBoHermitianmanifold} explored the relationship between the complexified Levi-Civita connection and the Chern connection via structural equations.
He also relates the $s$-Gauduchon connection on Lie groups to Lie algebra structure constants, thus leading to a partial proof of the Fino-Vezzoni
conjecture \cite{YangBoLieGroup}.

%The preceding discussions demonstrate that the study of invariant metrics, both quadratic and Finslerian on Lie groups can be effectively reduced to the Lie algebra level.
A natural question one may ask is: whether can we study the differential geometry of strongly pseudoconvex complex Finsler metrics on a Lie group which is also a complex manifold?
%This question
%The study of left invariant complex Finsler metrics on Lie groups, which lies at the intersection of differential geometry and Lie group theory,
%has received considerable attention in recent years. A central theme in this area is establishing precise correspondences between global geometric structures
%and convexity conditions on the associated Lie algebras. First, we provide a fundamental characterization of left-invariant complex Finsler metrics:
As a first result of this paper, we prove the following
\begin{proposition}
 Let $ G $ be a connected real Lie group with Lie algebra $ \mathfrak{g} $. Suppose $G$ is also a complex manifold and $\mathfrak{g}^\mathbb{C}=\mathfrak{g}^{1,0}\oplus\mathfrak{g}^{0,1}$.
Then there is a one-to-one correspondence between left-invariant complex Finsler metrics $ \widetilde{F}:T^{1,0}G\rightarrow[0,+\infty)$ on $ G $ and
complex Minkowski norms $ F:\mathfrak{g}^{1,0}\rightarrow[0,+\infty) $ on $ \mathfrak{g}^{1,0} $.
\end{proposition}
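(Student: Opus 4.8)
The plan is to exploit the global trivialization of $T^{1,0}G$ furnished by left translation, which reduces the entire statement to transporting a single pointwise norm at the identity across the group. The left-invariant complex structure $J$ (this left-invariance is precisely what guarantees that the left-translate of a $(1,0)$-vector is again of type $(1,0)$, so that the notion of a left-invariant $\widetilde{F}$ is well-posed) satisfies $J_{g}\circ L_{g*}=L_{g*}\circ J_{e}$, so each differential $L_{g*}$ preserves type and restricts to a $\mathbb{C}$-linear isomorphism $L_{g*}\colon\mathfrak{g}^{1,0}=T^{1,0}_{e}G\to T^{1,0}_{g}G$; moreover $g\mapsto L_{g*}$ depends smoothly on $g$ because group multiplication is smooth. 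First I would record this as a smooth trivialization $\Phi\colon G\times\mathfrak{g}^{1,0}\to T^{1,0}G$, $(g,v)\mapsto L_{g*}v$, under which any basis of $\mathfrak{g}^{1,0}$ yields a global smooth frame of $T^{1,0}G$.

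For the forward direction, given a left-invariant complex Finsler metric $\widetilde{F}$, I would set $F:=\widetilde{F}|_{T^{1,0}_{e}G}\colon\mathfrak{g}^{1,0}\to[0,+\infty)$. The defining axioms of a strongly pseudoconvex complex Finsler metric — positivity off the origin, the absolute homogeneity $\widetilde{F}(\lambda w)=|\lambda|\,\widetilde{F}(w)$ for $\lambda\in\mathbb{C}$, smoothness on the complement of the zero section, and positive definiteness of the complex Hessian of $\widetilde{F}^{2}$ — hold in particular on the fiber over $e$, where they coincide verbatim with the axioms of a complex Minkowski norm on $\mathfrak{g}^{1,0}$. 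Hence $F$ is a well-defined complex Minkowski norm.

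For the backward direction, given a complex Minkowski norm $F$ on $\mathfrak{g}^{1,0}$, I would define $\widetilde{F}$ by $\widetilde{F}\bigl(\Phi(g,v)\bigr)=F(v)$, equivalently $\widetilde{F}(w)=F\bigl((L_{g*})^{-1}w\bigr)$ for $w\in T^{1,0}_{g}G$, and then run three checks. Left-invariance follows from the chain rule $L_{hg*}=L_{h*}\circ L_{g*}$ induced by $L_{hg}=L_{h}\circ L_{g}$: for $w\in T^{1,0}_{g}G$ one computes $\widetilde{F}(L_{h*}w)=F\bigl((L_{hg*})^{-1}L_{h*}w\bigr)=F\bigl((L_{g*})^{-1}w\bigr)=\widetilde{F}(w)$. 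The fiberwise Minkowski axioms transfer from $F$ unchanged because each $(L_{g*})^{-1}$ is a $\mathbb{C}$-linear isomorphism, so homogeneity, positivity, and positivity of the Hessian are all preserved. The single point that genuinely uses the manifold structure rather than fiberwise linear algebra is global smoothness on $T^{1,0}G\setminus\{0\}$; this is immediate in the trivialization $\Phi$, since there $\widetilde{F}$ is represented by $(g,v)\mapsto F(v)$, which is independent of $g$ and smooth for $v\neq 0$ by hypothesis on $F$.

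Finally I would check that the two assignments are mutually inverse. Composing backward then forward returns the original norm because $L_{e*}=\mathrm{id}$ gives $\widetilde{F}|_{T^{1,0}_{e}G}=F$; composing forward then backward reproduces the original metric on every fiber precisely by the left-invariance of $\widetilde{F}$. I expect the only real subtlety to be the global smoothness in the backward direction, and this is exactly what the left-invariant trivialization $\Phi$ dissolves; the remainder is bookkeeping in fiberwise linear algebra.
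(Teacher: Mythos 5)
Your proposal is correct and takes essentially the same route as the paper: both exploit the left-translation trivialization $T^{1,0}G\cong G\times\mathfrak{g}^{1,0}$ (the paper phrases it via a global left-invariant frame $\{e_i\}$ coming from a basis of $\mathfrak{g}^{1,0}$ and observes that $dF$ has no base components, while you phrase it via the map $\Phi(g,v)=L_{g*}v$) to reduce the metric to its restriction on the identity fiber. Your write-up is somewhat more explicit about verifying that the two assignments are mutually inverse and that smoothness transfers, details the paper leaves implicit, but the underlying argument is the same.
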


%The above Proposition reduces the study of left-invariant complex Finsler metrics on $ G $ to study of complex Minkowski norms on $ \mathfrak{g}^{1,0} $.
% To investigate the differential-geometric properties of these metrics,
%we derive explicit formulas for the  complex Chern-Rund connection and curvature in terms of the Lie algebra structure.
%Specifically, we have:

%$$
%\begin{aligned}
%\Gamma^j_{\ ik} &= \lambda^{\ j}_{i\  k} - C^{\ j}_{i \ l} N^l_k + C^{\ j}_{i\ \bar t} \lambda^{\bar t}_{\ \bar o k}, \quad \Gamma^j_{\ i\bar k} = -\lambda^j_{\ i\bar k}; \\
%T^j_{\ ik} &= \frac{1}{2} \bigl\{ \lambda^{\ j}_{k\  i} - \lambda^j_{\ ik} - \lambda^{\ j}_{i \  k} + C^{\ j}_{i\  l} N^l_k - C^{\ j}_{i\ \bar t} \lambda^{\bar t}_{\ \bar o k} - C^{\ j}_{k\  l} N^l_i + C^{\ j}_{k\ \bar t} %\lambda^{\bar t}_{\ \bar o i} \bigr\},
%\end{aligned}
%$$
%where $ N^j_{\;k} = \lambda^{\ j}_{o \  k} + g^{\bar q j} g_{\bar q\bar t} \lambda^{\bar t}_{\ \bar o k} $.
Let $u, w \in \mathfrak{g}^{1,0}$ and
$$
g_v(\mathcal{N}(w), u):= g_v(v, [u, \bar w]^{1, 0}) + \bar{\mathcal{S}}_v(u, [v, \bar w]^{1, 0}),
$$
where $ \mathcal{N}$ is the connection operator on $\mathfrak{g}^{1,0}$  and $\mathcal{S}_v$ is the symmetric product on $\mathfrak{g}^{1,0}$ (see Definition \ref{linear operator}).

The operator $\mathcal{N}$ plays a crucial role in characterizing left-invariant complex Finsler metrics on $G$ to be K\"ahler-Finsler metrics and weakly K\"ahler-Finsler metrics. More precisely, we have
\begin{theorem}
Let $ G $ be a connected real Lie group with Lie algebra $ \mathfrak{g} $. Suppose $G$ is also a complex manifold, and $ F:T^{1,0}G\rightarrow[0,+\infty) $ a left-invariant strongly pseudoconvex complex Finsler metric on $ G $. Then

 (1) $ F $ is a K\"ahler-Finsler metric iff
    $$
    \begin{aligned}
    &g_v(w, [u, \bar v]^{1, 0}) - g_v([v, w]^{1, 0}, u) - g_v(v, [u, \bar w]^{1, 0}) \\
    &- \bar{\mathcal{S}}_v(u, [v, \bar w]^{1, 0}) - \mathcal{C}_v^+(w, u, \mathcal{N}(v)) + \mathcal{C}_v^-(w, u, [v, \bar v]^{1, 0}) = 0
    \end{aligned}
    $$
    for all $ u, w \in \mathfrak{g}^{1,0} $.

 (2) $ F $ is a weakly K\"ahler-Finsler metric iff
    $$
    g_v(w, [v, \bar v]^{1, 0}) - g_v([v, w]^{1, 0}, v) - g_v(v, [v, \bar w]^{1, 0}) - \mathcal{S}_v(\mathcal{N}(v), w) = 0
    $$
    for all $ w \in \mathfrak{g}^{1,0} $.
\end{theorem}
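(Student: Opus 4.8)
The plan is to reduce both statements to the standard coordinate characterizations of the K\"ahler-type conditions in complex Finsler geometry and then to convert every base-coordinate derivative into a Lie bracket by exploiting left-invariance, in the spirit of Milnor's reformulation of the Riemannian case. Write $G=F^2$ for the fundamental function, $g_{\alpha\bar\beta}=\partial^2 G/\partial v^\alpha\partial\bar v^\beta$ for the fundamental tensor, $\Gamma^\mu_\gamma=g^{\bar\nu\mu}\,\partial^2 G/\partial z^\gamma\partial\bar v^\nu$ for the complex nonlinear-connection coefficients, and $\delta_\gamma=\partial/\partial z^\gamma-\Gamma^\nu_\gamma\,\partial/\partial v^\nu$ for the horizontal derivative. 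Recall that, with the Chern--Finsler coefficients $\Gamma^\mu_{\alpha;\gamma}=g^{\bar\nu\mu}\,\delta_\gamma g_{\alpha\bar\nu}$, the metric $F$ is K\"ahler-Finsler iff $(\Gamma^\mu_{\alpha;\gamma}-\Gamma^\mu_{\gamma;\alpha})v^\alpha=0$ and weakly K\"ahler-Finsler iff $G_\mu(\Gamma^\mu_{\alpha;\gamma}-\Gamma^\mu_{\gamma;\alpha})v^\alpha=0$, where $G_\mu=\partial G/\partial v^\mu=g_{\mu\bar\nu}\bar v^\nu$. Thus part (2) is the contraction of the tensor equation of (1) by $G_\mu$ on the index lowered and paired with $w$, the surviving free index being paired with the sole test vector $w$ of (2).

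First I would fix a left-invariant holomorphic frame $\{Z_\alpha\}$ of $T^{1,0}G$ coming from a basis of $\mathfrak g^{1,0}$ and write the fiber coordinate of $v$ relative to this frame. By the Proposition the fundamental function, expressed in this frame, depends only on the fiber variable, so its naive base derivatives vanish; however the frame is non-holonomic, $[Z_\alpha,Z_\beta]=c^\gamma_{\alpha\beta}Z_\gamma$, and it is precisely the resulting Maurer--Cartan terms that replace each occurrence of $\partial/\partial z^\gamma$ by a contraction of $g_v$ with a bracket $[\,\cdot\,,\,\bar\cdot\,]^{1,0}$. This is the step that turns the analytic conditions above into algebraic ones on $\mathfrak g^{1,0}$.

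The key intermediate point is to compute the nonlinear-connection coefficients of the left-invariant metric and to recognize the associated spray as the operator $\mathcal N$ of Definition \ref{linear operator}: one checks that $\Gamma^\alpha_\gamma v^\gamma Z_\alpha=\mathcal N(v)$, i.e. $g_v(\mathcal N(v),u)=g_v(v,[u,\bar v]^{1,0})+\bar{\mathcal S}_v(u,[v,\bar v]^{1,0})$ for all $u$, the two contributions arising respectively from the bracket produced by the non-holonomy of the frame and from the Cartan (symmetric-product) term generated by the $v$-dependence of $g_{\alpha\bar\nu}$. With this identification, expanding $\delta_\gamma g_{\alpha\bar\nu}$ via the Maurer--Cartan structure, antisymmetrizing in the two indices and contracting with $v^\alpha$ yields the four bracket and symmetric-product terms $g_v(w,[u,\bar v]^{1,0})-g_v([v,w]^{1,0},u)-g_v(v,[u,\bar w]^{1,0})-\bar{\mathcal S}_v(u,[v,\bar w]^{1,0})$ together with the two Cartan corrections $-\mathcal C^+_v(w,u,\mathcal N(v))+\mathcal C^-_v(w,u,[v,\bar v]^{1,0})$, the latter coming from the $-\Gamma^\nu_\gamma\,\partial/\partial v^\nu$ part of the horizontal derivative with the spray $\mathcal N(v)$ inserted into the Cartan tensor. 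Lowering and pairing against $w$ and $u$ gives the identity of part (1).

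Part (2) then follows by the extra contraction with $G_\mu$ described above. The decisive simplification is that the fundamental tensor is homogeneous of degree zero in $v$, so the Cartan tensor satisfies $v^\alpha C_{\alpha\bar\mu\nu}=0$; consequently contracting the $w$-slot of $\mathcal C^\pm_v$ against $v$ makes both Cartan terms vanish, which is why no $\mathcal C^\pm_v$ appears in (2). Using the defining relation for $\mathcal N$ together with the Hermitian symmetry of $g_v$ to recombine the surviving terms into $\mathcal S_v(\mathcal N(v),w)$ then yields the stated weakly K\"ahler-Finsler identity. I expect the main obstacle to be organizational rather than conceptual: tracking which derivative of the non-holonomic frame produces which bracket, and keeping the $(1,0)$-projections $[\,\cdot\,,\,\bar\cdot\,]^{1,0}$ and the conjugations in the Hermitian pairing consistent throughout -- in particular verifying that the left-invariant spray really equals $\mathcal N(v)$ and not some other bracket combination, since every subsequent identification rests on that equality.
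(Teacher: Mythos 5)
Your strategy for part (1) is broadly parallel to the paper's (reduce to vanishing of the $v$-contracted torsion via Chen--Shen, identify the nonlinear connection with $\mathcal{N}$, convert base derivatives into brackets by left-invariance), but two steps in your plan are genuinely wrong, not merely organizational. First, there is no ``left-invariant holomorphic frame'' in this setting: left-translating a basis of $\mathfrak{g}^{1,0}$ gives a frame of $(1,0)$ vector fields which is holomorphic only when all mixed structure constants vanish, i.e.\ $[Z_\alpha,\bar Z_\beta]^{1,0}=0$; that is exactly the complex Lie group case, where $\mathcal{N}\equiv 0$ and all the mixed-bracket terms in (1) and (2) disappear, so taking holomorphy literally trivializes the theorem. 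This matters because your starting characterization $\Gamma^\mu_{\alpha;\gamma}=g^{\bar\nu\mu}\delta_\gamma g_{\alpha\bar\nu}$, and ``K\"ahler iff the contracted $\Gamma$'s are symmetric,'' are valid only in holomorphic frames or coordinates. In the left-invariant frame the connection $1$-forms acquire $(0,1)$-parts, $\Gamma^i_{\,j\bar k}=-\lambda^i_{\,j\bar k}\neq 0$, and the antisymmetrized coefficients are no longer the torsion: one has $\Gamma^i_{\,jk}-\Gamma^i_{\,kj}=-2T^i_{\,jk}-\lambda^i_{\,jk}$, so symmetry of coefficients is frame-dependent. Bridging the holomorphic-coordinate definition with the non-holomorphic invariant frame is the actual technical content of the proof (the paper does it through the Chern--Rund structure equations combined with the Maurer--Cartan equation $d\varphi^i=-\tfrac12\lambda^i_{\,jk}\varphi^j\wedge\varphi^k-\lambda^i_{\,j\bar k}\varphi^j\wedge\bar\varphi^k$); your sketch asserts the outcome of that conversion, and moreover attributes the mixed brackets $[\,\cdot\,,\bar{\cdot}\,]^{1,0}$ to the non-holonomy $[Z_\alpha,Z_\beta]=c^\gamma_{\alpha\beta}Z_\gamma$, when in fact they come from the non-holomorphy of the frame.

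Second, your derivation of part (2) would fail as described. The contraction with $G_\mu$ acts on the torsion's upper index, which in (1) is the slot paired with $u$; thus (2) is (1) with $u=v$, the free slot remaining with $w$. Contracting the $w$-slot with $v$, as you propose, yields $0=0$ identically by antisymmetry of the torsion: setting $w=v$ in (1), the first and third terms cancel, the second vanishes since $[v,v]=0$, $\mathcal{C}^+_v(v,u,\mathcal{N}(v))=0$ by the Euler identity $C_{i\bar\jmath l}v^i=0$, and $\mathcal{C}^-_v(v,u,[v,\bar v]^{1,0})=\bar{\mathcal{S}}_v(u,[v,\bar v]^{1,0})$ cancels the fourth term. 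Under the correct contraction $u=v$, it is false that ``both Cartan terms vanish'': one has $C_{i\bar\jmath\bar l}\bar v^j=0$ and $\bar{\mathcal{S}}_v(v,\cdot)=0$, but $C_{i\bar\jmath l}\bar v^j=g_{il}$, whence $\mathcal{C}^+_v(w,v,\mathcal{N}(v))=\mathcal{S}_v(\mathcal{N}(v),w)$; the $\mathcal{C}^+_v$ term does not disappear but is precisely the origin of the symmetric-product term in (2). Your proposed repair --- recombining surviving terms into $\mathcal{S}_v(\mathcal{N}(v),w)$ via the defining relation of $\mathcal{N}$ and Hermitian symmetry --- cannot work: that relation produces only Hermitian pairings $g_v(\mathcal{N}(v),\cdot)$ and conjugate symmetric products, never the symmetric product $\mathcal{S}_v$, and the bracket terms of (1) already pass to (2) unchanged, leaving nothing to recombine.
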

\begin{remark}
	If $ F $ comes from a left-invariant Hermitian metric on $ G $, the above two conditions reduce to the standard K\"ahler condition in Hermitian geometry \cite{YangBoLieGroup}.
\end{remark}
We are also able to use the structure constants of $\mathfrak{g}^{\mathbb{C}}$  to compute the holomorphic sectional and bisectional curvatures of the left-invariant complex Finsler metrics on $G$.

Let $\mathcal{D}$ be the \textbf{flat connection} $\mathcal{D}$ on $\mathfrak{g}^{1,0}\setminus\{0\}$ which is defined by the usual \textbf{directional derivative} \eqref{direction  derivative}.

\begin{theorem}
  Let $ G $ be a connected real Lie group with Lie algebra $ \mathfrak{g}$. Suppose $G$ is also a complex manifold,
 and $ F:T^{1,0}G\rightarrow[0,+\infty) $ a left-invariant complex Finsler metric on $ G $. Then the holomorphic bisectional curvature in the directions $ v, w \in \mathfrak{g}^{1,0}\setminus\{0\} $ is given by
 $$
 B(v, w) = \frac{g_v(R(w, \bar w) v, v)}{g_v(v, v) \, g_v(w, w)} ,
 $$
 and the holomorphic sectional curvature in the direction $ v \in \mathfrak{g}^{1,0} \setminus\{0\}$ is given by
 $$
 K(v) = 2\frac{g_v(R(v, \bar v) v, v)}{F^4(v)} ,
 $$
 where $ R(w, \bar w) v $ is given by
 \begin{equation*}
 \begin{split}
 R(w, \bar w) v = &\mathcal{D}_{[v, \bar w]^{1, 0}}(\mathcal{N}(w)) - \mathcal{D}_{\bar{\mathcal{N}}(\bar w)}(\mathcal{N}(w)) + \mathcal{N}([w, \bar w]^{1, 0}) \\
 &- [\mathcal{N}(w), \bar w]^{1, 0} + [v, [\bar w, w]^{0, 1}]^{1, 0}.
 \end{split}
 \end{equation*}
\end{theorem}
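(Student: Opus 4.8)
The plan is to trivialise the holomorphic tangent bundle by left translations, $T^{1,0}G \cong G \times \mathfrak{g}^{1,0}$, so that, by Proposition~1, the fundamental tensor $g_v = (g_{i\bar j}(v))$ depends only on the fibre variable $v \in \mathfrak{g}^{1,0}$ and not on the base point. Fix a basis $\{e_i\}$ of $\mathfrak{g}^{1,0}$ and let $\{E_i\}$ be the associated left-invariant $(1,0)$-frame on $G$. I would first recall the general expressions for the holomorphic bisectional and sectional curvatures of a strongly pseudoconvex complex Finsler metric in terms of the curvature of the Chern--Finsler connection contracted with $g_v$; this reduces everything to computing the horizontal curvature operator $R(w,\bar w)v$ in this frame. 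The decisive simplification, exactly as in Milnor's Riemannian computation, is that all base derivatives of $g_{i\bar j}$ vanish, so that the connection and curvature become purely algebraic expressions on $\mathfrak{g}^{1,0}$ assembled from the structure constants, i.e.\ from the brackets together with their $(1,0)$- and $(0,1)$-projections.

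The first step is to determine the nonlinear (horizontal) part of the Chern--Finsler connection in the frame $\{E_i\}$. Because this frame is non-holonomic, the base-derivative terms in the connection coefficients are replaced by bracket terms; carrying out this ``Finsler--Koszul'' computation should recover precisely the connection operator $\mathcal{N}$ via its defining relation
$$
g_v(\mathcal{N}(w), u) = g_v(v, [u, \bar w]^{1, 0}) + \bar{\mathcal{S}}_v(u, [v, \bar w]^{1, 0}),
$$
the symmetric product $\mathcal{S}_v$ entering through the non-quadratic dependence of $g_v$ on $v$, a contribution that would be absent in the Hermitian case. Identifying $\mathcal{N}$ allows me to write the horizontal lift of a holomorphic direction $w$ as $\widehat{w} = w - \mathcal{D}_{\mathcal{N}(w)}$, where the first summand acts as the left-invariant base field and the second is the vertical correction given by the flat directional derivative $\mathcal{D}$; correspondingly $\widehat{\bar w} = \bar w - \mathcal{D}_{\bar{\mathcal{N}}(\bar w)}$.

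The curvature is then read off from the second structure equation, i.e.\ from the commutator of horizontal lifts, schematically $R(w,\bar w) = [\widehat{w}, \widehat{\bar w}] - \widehat{[w,\bar w]}$. Expanding this commutator and sorting the terms by type, the five summands of the asserted formula for $R(w,\bar w)v$ emerge in turn: the two derivative-of-connection contributions $\mathcal{D}_{[v,\bar w]^{1,0}}(\mathcal{N}(w))$ and $-\mathcal{D}_{\bar{\mathcal{N}}(\bar w)}(\mathcal{N}(w))$, the bracket-of-horizontal correction $\mathcal{N}([w,\bar w]^{1,0})$ coming from the $(1,0)$-part of $[w,\bar w]$, and the two genuinely non-holonomic terms $-[\mathcal{N}(w),\bar w]^{1,0}$ and $[v,[\bar w,w]^{0,1}]^{1,0}$. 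Finally, contracting with $g_v(\,\cdot\,,v)$ and normalising gives $B(v,w)$, and specialising to $w=v$ together with $g_v(v,v) = F^2(v)$ gives $K(v)$.

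The main obstacle I anticipate is the consistent bookkeeping of the $(1,0)/(0,1)$ decomposition of the brackets through the non-holonomic frame, and the clean separation of the vertical (fibre) contributions, handled by $\mathcal{D}$, from the horizontal (base) ones, handled by $\mathcal{N}$ and the brackets. Since $\mathfrak{g}^{\mathbb{C}}$ need not be a complex Lie algebra, the bracket mixes the two types; this is precisely what produces the last term $[v,[\bar w,w]^{0,1}]^{1,0}$, which has no counterpart in the integrable Hermitian setting, and tracking it correctly is the delicate point of the argument.
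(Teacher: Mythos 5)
Your strategy is in substance the paper's own: left-invariance collapses all dependence onto the fibre variable $v\in\mathfrak{g}^{1,0}$, the operator $\mathcal{N}$ is extracted from the structure equations of the complex Chern--Rund connection written in the left-invariant frame, and the quantity to be contracted with $g_v$ is the horizontal $(1,1)$-curvature of that connection. The paper executes the last step with Cartan structure equations, expanding $\Omega^i=d\boldsymbol{\varphi}^{n+i}-\boldsymbol{\varphi}^{n+j}\wedge\omega^i_j$ in the coframe and reading off the $\varphi^k\wedge\bar\varphi^j$ coefficient (its \eqref{OOOO}--\eqref{R(w,bar w)v}), whereas you propose the dual computation via commutators of horizontal lifts; these are the same calculation in two languages, so the route is essentially the paper's.

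There is, however, one concrete gap: your horizontal lifts are incomplete, and with them the commutator expansion would not produce the asserted formula. Because the left-invariant frame is not holomorphic, the first structure equation forces the mixed connection coefficients $\Gamma^i_{\;j\bar k}=-\lambda^i_{\;j\bar k}$ (the paper's \eqref{Gamma_ijk}), so the nonlinear connection has horizontal components of \emph{both} types, $\boldsymbol{\varphi}^{n+i}=dv^i+N^i_{\;k}\varphi^k+N^i_{\;\bar\jmath}\bar\varphi^j$ with $N^i_{\;\bar\jmath}=-\lambda^i_{\;s\bar\jmath}v^s$, i.e.\ $\mathcal{N}(\bar w)=-[v,\bar w]^{1,0}$. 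Hence the correct lifts are $\widehat{w}=w-\mathcal{D}_{\mathcal{N}(w)}-\mathcal{D}_{\bar{\mathcal{N}}(w)}$ and $\widehat{\bar w}=\bar w-\mathcal{D}_{\mathcal{N}(\bar w)}-\mathcal{D}_{\bar{\mathcal{N}}(\bar w)}$, not the ones you wrote, and likewise $\widehat{[w,\bar w]}$ must carry the $(1,0)$-vertical correction attached to the component $[w,\bar w]^{0,1}$. Running your commutator with the lifts exactly as written yields only $-\mathcal{D}_{\bar{\mathcal{N}}(\bar w)}(\mathcal{N}(w))+\mathcal{N}([w,\bar w]^{1,0})$; the remaining three terms of the claimed formula are precisely the contributions of the omitted pieces, namely
\begin{equation*}
\mathcal{D}_{[v,\bar w]^{1,0}}(\mathcal{N}(w))=-\mathcal{D}_{\mathcal{N}(\bar w)}(\mathcal{N}(w)),\qquad
-[\mathcal{N}(w),\bar w]^{1,0}=\mathcal{D}_{\mathcal{N}(w)}(\mathcal{N}(\bar w)),
\end{equation*}
which arise from commuting the two $(1,0)$-vertical corrections against each other, and $[v,[\bar w,w]^{0,1}]^{1,0}=\mathcal{N}([w,\bar w]^{0,1})$, which is the missing correction inside $\widehat{[w,\bar w]}$. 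So the $(1,0)/(0,1)$ bookkeeping you flag as ``the delicate point'' is not a refinement to be added at the end: the lifts must be completed \emph{before} expanding, exactly as encoded in \eqref{Gamma_ijk} and \eqref{N^i_k}, after which your expansion reproduces the paper's formula term by term. (A minor further point: specialising $w=v$ with $g_v(v,v)=F^2(v)$ gives $g_v(R(v,\bar v)v,v)/F^4(v)$; the statement carries an extra factor $2$ from the paper's normalisation of holomorphic sectional curvature, which your write-up should account for explicitly.)
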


If in particular, $ G $ is a complex Lie group, the above formulas can be greatly simplified, and we obtain
\begin{theorem}\label{mth-3}
	Suppose $G$ is a complex Lie group and $F:T^{1,0}G\rightarrow[0,+\infty)$ is a left-invariant complex Finsler metric on $G$. Then
	$F$ must be a complex Berwald metric with vanishing holomorphic bisectional curvature. In particular,  $F$ is a K\"ahler-Berwald metric iff $G$ is an Abelian complex Lie group.
\end{theorem}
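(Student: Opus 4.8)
The plan is to exploit the defining feature of a complex Lie group: the Lie bracket is $\mathbb{C}$-bilinear and hence respects the splitting $\mathfrak{g}^{\mathbb{C}}=\mathfrak{g}^{1,0}\oplus\mathfrak{g}^{0,1}$, so that $[\mathfrak{g}^{1,0},\mathfrak{g}^{1,0}]\subseteq\mathfrak{g}^{1,0}$, $[\mathfrak{g}^{0,1},\mathfrak{g}^{0,1}]\subseteq\mathfrak{g}^{0,1}$, and the mixed brackets vanish identically, $[\mathfrak{g}^{1,0},\mathfrak{g}^{0,1}]=0$. First I would record the consequences of this for the quantities appearing in the earlier theorems: for all $u,w\in\mathfrak{g}^{1,0}$ one has $[u,\bar v]^{1,0}=[u,\bar w]^{1,0}=[v,\bar w]^{1,0}=[v,\bar v]^{1,0}=[w,\bar w]^{1,0}=0$ and $[\bar w,w]^{0,1}=0$, since each is the projection of a mixed bracket, while $[v,w]^{1,0}=[v,w]$ because $[v,w]\in\mathfrak{g}^{1,0}$ already.

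Feeding the identities $[u,\bar w]^{1,0}=0$ and $[v,\bar w]^{1,0}=0$ into the defining relation $g_v(\mathcal{N}(w),u)=g_v(v,[u,\bar w]^{1,0})+\bar{\mathcal{S}}_v(u,[v,\bar w]^{1,0})$ makes the right-hand side vanish for every $u\in\mathfrak{g}^{1,0}$; since $g_v$ is non-degenerate by strong pseudoconvexity, this forces $\mathcal{N}\equiv 0$. The vanishing of the connection operator is the engine driving both conclusions. For the curvature I would substitute $\mathcal{N}\equiv 0$ together with the vanishing mixed brackets into the formula of the preceding theorem: the two $\mathcal{D}$-terms vanish because $\mathcal{N}(w)=0$ (and $\bar{\mathcal{N}}(\bar w)=0$), the term $\mathcal{N}([w,\bar w]^{1,0})$ vanishes for the same reason, $[\mathcal{N}(w),\bar w]^{1,0}=0$ since $\mathcal{N}(w)=0$, and $[v,[\bar w,w]^{0,1}]^{1,0}=0$ since $[\bar w,w]^{0,1}=0$. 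Hence $R(w,\bar w)v=0$ for all $v,w$, and the curvature formulas give $B(v,w)\equiv 0$ (and $K(v)\equiv 0$) at once.

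For the Berwald property I would return to the explicit Chern--Finsler connection data obtained while proving the curvature theorem. These coefficients split into a part carrying the fibre dependence on $v$ — built from $\mathcal{N}$ and the Cartan tensors $\mathcal{C}_v^{\pm}$ contracted against the mixed brackets — and a purely algebraic part governed by the structure constants of $\mathfrak{g}^{1,0}$. Because $\mathcal{N}\equiv 0$ and every mixed bracket vanishes, the first part drops out entirely, leaving connection coefficients independent of the direction $v$, which is exactly the defining condition for $F$ to be a complex Berwald metric. Conceptually, the left-invariant $(1,0)$-frame is holomorphic on a complex Lie group and the metric is constant in it, so the only surviving contribution comes from the non-holonomy $[e_i,e_j]=c^k_{ij}e_k$, which is holomorphic and $v$-free. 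I expect this step to be the main obstacle, since identifying precisely which pieces of the connection survive and verifying their $v$-independence requires the connection data underlying the curvature theorem, not merely the curvature formula itself.

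Finally, for the rigidity statement I would invoke the K\"ahler-Finsler criterion (1). Substituting $[u,\bar v]^{1,0}=[u,\bar w]^{1,0}=[v,\bar w]^{1,0}=[v,\bar v]^{1,0}=0$ and $\mathcal{N}(v)=0$, the Cartan-tensor terms $\mathcal{C}_v^{\pm}(w,u,\cdot)$ vanish and the criterion collapses to $g_v([v,w],u)=0$ for all $u,w\in\mathfrak{g}^{1,0}$ and all $v\neq 0$. By non-degeneracy of $g_v$ this is equivalent to $[v,w]=0$ for all $v,w$, i.e. to $[\mathfrak{g}^{1,0},\mathfrak{g}^{1,0}]=0$, which is precisely the condition that $G$ be an Abelian complex Lie group. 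Since $F$ is already complex Berwald, being K\"ahler-Berwald is equivalent to being K\"ahler-Finsler, so this gives the claimed equivalence; the converse is transparent, for if $G$ is Abelian then all brackets vanish, $F$ reduces to a complex Minkowski metric pulled back to $G$, and criterion (1) holds trivially, whence $F$ is K\"ahler-Berwald.
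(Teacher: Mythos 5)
Your proposal is correct and follows essentially the same route as the paper: the mixed brackets $[\mathfrak{g}^{1,0},\mathfrak{g}^{0,1}]$ vanish (the paper verifies this from $\mathrm{ad}_x\circ I=I\circ\mathrm{ad}_x$), hence $\mathcal{N}\equiv 0$, the connection coefficients lose all $v$-dependence (Berwald), the curvature formula collapses to zero, and the Abelian characterization follows from the K\"ahler condition, which the paper reads off the torsion $T^i_{\;jk}=-\tfrac{1}{2}\lambda^i_{\;jk}$ rather than from criterion (1), an equivalent route since that criterion was derived from the torsion. One small correction to your Berwald step: in the paper's formula for $\Gamma^j_{\;ik}$ \emph{every} term is built from mixed structure constants, so on a complex Lie group the connection coefficients vanish identically --- the structure constants of $\mathfrak{g}^{1,0}$ survive only in the torsion, not in the connection as your ``purely algebraic part'' suggests --- but this imprecision is harmless, since zero is certainly $v$-independent.
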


\begin{remark}
In \cite{X}, Xu and his coauthor have independently obtained Theorem \ref{mth-3}. Our method in proving Theorem \ref{mth-3} is based on moving frame and complex Chern-Rund connection, which is different from \cite{X}. 
\end{remark}

The above results provide an algebraic framework for further investigating the differential geometry of left-invariant complex Finsler metrics on real Lie groups which are also complex manifolds.

This article is arranged as follows. In Section \ref{Preliminaries}, we  recall some necessary and fundamental concepts in complex Finsler geometry.
Let $F:T^{1,0}M\rightarrow [0,+\infty)$ be a strongly pseudoconvex complex Finsler metric on a complex manifold $M$. In Section \ref{pullback bundle}, we derive the complex Chern-Rund connection of a strongly pseudoconvex complex Finsler metric on the pull-back bundle, including its curvature and torsion, and introduce the definitions of K\"ahler-Finsler metrics
and weakly K\"ahler-Finsler metrics.
Let $G$ be a real Lie group which is simultaneously a complex manifold. In Section \ref{LieGroup}, we establish a necessary and sufficient condition for a left-invariant complex Finsler metric $F:T^{1,0}G\rightarrow [0,+\infty)$ to be a K\"ahler-Finsler metric and a weakly  K\"ahler-Finsler metric, respectively, and we also derive  the holomorphic sectional and bisectional curvature formulas of $F$ on $G$.
In particular on a complex Lie group $G$, we  show that a left-invariant complex Finsler metric  is a K\"ahler-Berwald metric iff $G$ is an Abelian complex Lie group.

\section{Preliminaries}\label{Preliminaries}
\hspace{1em}
In this section,  we recall some basic facts and definitions in complex differential geometry.
For Hermitian geometry, we refer to \cite{FoundationOfDG2},  for real and complex Finsler geometry, refer to \cite{AbateAndPatrizio} and \cite{bao2012introduction}, respectively.

Let $M$ be a smooth manifold of real dimension $2n$ with real tangent bundle $TM$. An \textbf{almost complex structure} on $M$ is a smooth bundle endomorphism
$J: TM \to TM$ satisfying $J \circ J = -\text{Id}$. Using $J$,  we extend the scalar multiplication on $TM$ to the complex number field $\mathbb{C}$ by defining
$$
(a + b\sqrt{-1})X = aX + bJX
$$
for any $X \in TM$ and $a,  b \in \mathbb{R}$. This allows us to complexify $TM$, obtaining $T^{\mathbb{C}}M = TM \otimes \mathbb{C}$.
The map $J$ extends complex linearly to $T^{\mathbb{C}}M$ and admits two eigenbundles corresponding to the eigenvalues $+\sqrt{-1}$ and $-\sqrt{-1}$, respectively, leading to the decomposition:
$$
T^{\mathbb{C}}M = T^{1, 0}M \oplus T^{0, 1}M,
$$
where
$$
T^{1, 0}M=\{X-\sqrt{-1}JX,X\in TM\},\; T^{0,1}M=\{X+\sqrt{-1}JX,X\in TM\},
$$
$T^{1, 0}M$ and $T^{0, 1}M$ are called the holomorphic and anti-holomorphic tangent bundle which corresponding to the eigenvalues $+\sqrt{-1}$ and $-\sqrt{-1}$,  respectively.
\begin{definition}\label{conjugation map}
In the complexified tangent bundle \( T^{\mathbb{C}}M = T^{1,0}M \oplus T^{0,1}M \), the conjugation map with respect to the almost complex
	structure $J$
\[
\sigma_{TM} : T^{\mathbb{C}}M \longrightarrow T^{\mathbb{C}}M
\]
defined by
\[
\sigma_{TM}(X - \sqrt{-1} J X) = X + \sqrt{-1} J X, \quad
\sigma_{TM}(X + \sqrt{-1} J X) = X - \sqrt{-1} J X ,\forall X \in TM.
\]
\end{definition}
Thus the conjugation map sends $T^{1,0}M$ to $T^{0,1}M$. For simplicity, let's use $\overline{X}$ instead of $\sigma_{TM}(X)$ for $X\in T^{1,0}M$.

\begin{definition}
	The \textbf{Nijenhuis torsion} (or simply \textbf{torsion}) of an almost complex structure $J$ is the tensor $N_J: TM \times TM \to TM$ defined by
$$
N_J(X,  Y) := [X,  Y] - [JX,  JY] + J[JX,  Y] + J[X,  JY]
$$
for any vector fields $X,  Y$ on $M$.
\end{definition}

The almost complex structure $J$ is said to be \textbf{integrable} if one of the following equivalent conditions holds:
\begin{enumerate}
\item[(1)] $N_J \equiv 0$;

\item[(2)] The Lie bracket of any two vector fields of type $(1,0)$ is again a vector field of type $(1,0)$,  i.e.,  $T^{1, 0}M$ is involutive.
\end{enumerate}

A fundamental result in complex differential geometry is the \textbf{Newlander-Nirenberg Theorem},  which states that $J$ is integrable iff $N_J \equiv 0$.
In this case,  $M$ admits a unique complex manifold structure such that $J$ corresponds to the multiplication by $\sqrt{-1}$ in the holomorphic tangent bundle.

%To study the geometry of a complex manifold equipped with a connection,  we introduce local frames and the corresponding structure equations.

Let $\{\eta _1,  \ldots,  \eta_n\}$ be a local frame of the holomorphic tangent bundle $T^{1, 0}M$ over an open set $U \subseteq M$. Let $\eta = {}^t(\eta_1,  \ldots,  \eta_n)$ denote
the column vector of this frame,  and let $\varphi = {}^t(\varphi^1,  \ldots,  \varphi^n)$ be the column vector of the dual coframe,  where each $\varphi^i$ is a $(1, 0)$-form.

Given a complex linear connection $\nabla$ on $T^{1, 0}M$,  we define the following local differential forms relative to the frame $\eta$:
\par
$\bullet$ the \textbf{connection 1-form matrix} $\theta = (\theta_i^j)$;
\par
$\bullet$ the \textbf{curvature 2-form matrix} $\Theta = (\Theta_i^j)$;
\par
$\bullet$ the \textbf{torsion 2-form column vector} $\tau = (\tau^i)$.

These forms satisfy the \textbf{Cartan structure equations}:
$$
\begin{aligned}
d\varphi &= -\theta \wedge \varphi + \tau,  \\
d\theta &= \theta \wedge \theta + \Theta.
\end{aligned}
$$

%The behavior of a connection under a change of frame is fundamental. Let $\{e_i\}$ and $\{\tilde{e}_i\}$ be two local frames related by $\tilde{e}_i = A_i^j e_j$,
%where $(A_i^j)$ is a smooth invertible matrix-valued function. Then the connection form $\tilde{\theta}$ relative to $\{\tilde{e}_i\}$ is given by:
%\begin{equation}
%\tilde{\theta} = dA \cdot A^{-1} + A \cdot \theta \cdot A^{-1}. \label{2.3}
%\end{equation}

%Although the connection form itself depends on the choice of frame,  the associated \textbf{torsion} and \textbf{curvature} tensors are frame-independent. This is a crucial fact,
%as it ensures that these geometric objects are globally defined on the manifold. This property will be essential when we later study the Chern-Rund connection on the pull-back bundle.

\begin{proposition}\cite{FoundationOfDG1}\label{TR independent frame}
	The \textbf{torsion} $T$ and \textbf{curvature} $R$ of a connection $\nabla$,  defined by
$$
\begin{aligned}
T(X,  Y) &= \nabla_X Y - \nabla_Y X - [X,  Y],  \\
R(X,  Y)Z &= \nabla_X \nabla_Y Z - \nabla_Y \nabla_X Z - \nabla_{[X,  Y]} Z
\end{aligned}
$$
for all $X,  Y,  Z \in T^{\mathbb{C}}M$,  are tensorial and thus independent of the choice of frame.
\end{proposition}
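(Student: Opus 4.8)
The plan is to derive tensoriality directly from the defining formulas, using the elementary principle that a map which is additive and $C^\infty$-linear in each of its arguments is automatically a tensor, and that a tensor is by its very construction a frame-independent object. Additivity of both $T$ and $R$ in every slot is immediate, since $\nabla$ and the Lie bracket are additive. The entire content therefore lies in verifying $C^\infty$-homogeneity, namely that a smooth (complex-valued) function $f$ may be pulled out of each argument; I would check this slot by slot using only the two structural identities $\nabla_{fX}Y = f\nabla_X Y$, $\nabla_X(fY) = (Xf)Y + f\nabla_X Y$, together with the bracket rules $[fX,Y] = f[X,Y] - (Yf)X$ and $[X,fY] = f[X,Y] + (Xf)Y$.

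For the torsion I would compute $T(fX,Y) = \nabla_{fX}Y - \nabla_Y(fX) - [fX,Y]$. Substituting the identities above, the term $-(Yf)X$ produced by the Leibniz rule and the term $+(Yf)X$ produced by $-[fX,Y]$ cancel, leaving exactly $fT(X,Y)$. Since $T(X,Y) = -T(Y,X)$ is manifest from the definition, linearity in the second argument follows at once, so the torsion part is routine.

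The genuinely delicate slot, and where I expect the only real obstacle, is the curvature endomorphism in its last argument, $R(X,Y)(fZ)$. Here the Leibniz rule must be applied twice, so second-order terms $X(Yf)$ and $Y(Xf)$ appear. Expanding $\nabla_X\nabla_Y(fZ)$ and $\nabla_Y\nabla_X(fZ)$ and subtracting, the first-derivative cross terms $(Yf)\nabla_X Z$ and $(Xf)\nabla_Y Z$ cancel in pairs, leaving the second-order term $\big(X(Yf) - Y(Xf)\big)Z = ([X,Y]f)Z$ together with $f(\nabla_X\nabla_Y - \nabla_Y\nabla_X)Z$. The key point is that the surviving second-order term $([X,Y]f)Z$ is precisely cancelled by the corresponding term from $-\nabla_{[X,Y]}(fZ) = -([X,Y]f)Z - f\nabla_{[X,Y]}Z$. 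This is exactly why the bracket-correction term $-\nabla_{[X,Y]}Z$ is built into the definition of $R$: it converts the second-order differential operator into the purely pointwise expression $fR(X,Y)Z$. Linearity of $R$ in its first two arguments is then the same computation as for $T$ (using $[fX,Y] = f[X,Y] - (Yf)X$ and $\nabla_Y(f\nabla_X Z) = (Yf)\nabla_X Z + f\nabla_Y\nabla_X Z$), where again the unwanted $(Yf)\nabla_X Z$ terms cancel, and antisymmetry $R(X,Y) = -R(Y,X)$ handles the remaining slot.

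Finally I would invoke the standard tensor characterization: once $T$ and $R$ are $C^\infty$-multilinear, their values at a point $p$ depend only on the values $X_p, Y_p, Z_p$ of the arguments at $p$, not on their extensions to vector fields. Consequently they define honest tensor fields, which transform under change of frame by the usual multilinear law and carry a meaning independent of any chosen local frame $\{\eta_1,\ldots,\eta_n\}$; re-expressing the arguments in a different frame merely pulls the coefficient functions through by multilinearity and reproduces the same value. I should note that every computation goes through verbatim over the complexification $T^{\mathbb{C}}M$ with $f$ complex-valued, since $\nabla$ is a complex linear connection and the Lie bracket extends complex-bilinearly, so no modification is needed for the complex setting of the paper.
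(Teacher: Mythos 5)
Your proof is correct and complete at the usual level of rigor: the slot-by-slot cancellations you describe (the $-(Yf)X$ from the Leibniz rule against $+(Yf)X$ from the bracket identity for $T$; the first-order cross terms and then $\bigl(X(Yf)-Y(Xf)\bigr)Z$ against $-([X,Y]f)Z$ for $R$) are exactly right, and antisymmetry correctly disposes of the remaining arguments. The paper, however, does not argue this way: it states the proposition as a standard fact cited from Kobayashi--Nomizu and indicates only the alternative route of expressing $T$ and $R$ in a local frame $\{\eta_i\}$ via the torsion and curvature forms $\tau$, $\Theta$ of the Cartan structure equations, and checking that under a change of frame the inhomogeneous term in the transformation of the connection $1$-forms cancels, so that the coefficients transform by the multilinear tensorial law. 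Your route is frame-free and establishes pointwise well-definedness of $T_p$ and $R_p$ directly from $C^\infty$-multilinearity, whereas the transformation-law route, though more computational, produces exactly the local coefficient objects and change-of-frame formulas that the rest of the paper works with (as in the later proof that the complex Berwald condition is frame-independent). Two points you should make explicit rather than implicit: first, pulling a complex-valued $f$ out of the direction slot presupposes that the complexified connection has been extended $\mathbb{C}$-linearly in the direction argument --- the paper defines $\nabla^{\mathbb{C}}$ on $\Gamma(E^{\mathbb{C}})\times\mathfrak{X}(M)$ with \emph{real} directions, so this extension deserves a sentence; second, the ``standard tensor characterization'' you invoke (that $C^\infty$-multilinearity implies pointwise dependence) itself rests on a bump-function argument, which is the one lemma you quote rather than prove. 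Neither point affects correctness.
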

%\begin{proof}
The tensorial nature of $T$ and $R$ can be verified by checking their transformation law under a change of local frame.
%	Let $\{\tilde{e}_i\}$ and $\{e_i\}$ be two local frames related by $\tilde{e}_i = A_i^j e_j$, where $(A_i^j)$ is a smooth,
%	invertible matrix-valued function. A direct computation shows that the components of $T$ and $R$ transform as
%$$
%\begin{aligned}
%\tilde{T}_{\ ij}^k &= (A^{-1})_k^p A_i^q A_j^r T_{\ qr}^p, \\
%\tilde{R}_{\ ijl}^k &= (A^{-1})_k^p A_i^q A_j^r A_l^s R_{\ qrs}^p,
%\end{aligned}
%$$
%which are the correct transformation rules for a $(1,2)$- and a $(1,3)$-tensor field, respectively. Therefore, $T$ and $R$ are independent of the choice of frame.
%\end{proof}
%\begin{remark}
which is a standard fact in differential geometry \cite{FoundationOfDG1}.
%	Equivalently, one may also establish the tensoriality by verifying $C^\infty(M, \mathbb{C})$-linearity in all arguments, a fundamental property that likewise guarantees
%	frame independence.
%\end{remark}
On a Hermitian manifold,  a particularly important connection is the Chern connection,  which is uniquely determined by the Hermitian structure and holomorphic data.

\begin{proposition}\cite{FoundationOfDG1} \label{HermitianconnetionTorsion}
	Let $(M, h)$ be a Hermitian manifold and $\nabla$ a Hermitian connection (i.e., $\nabla J = 0$ and $\nabla h = 0$). Then the following conditions are equivalent:
	
	\begin{enumerate}
		\item[(i)] The $(0, 1)$-part of $\nabla$ coincides with the Dolbeault operator: $\nabla^{(0, 1)} = \bar{\partial}$.
		\item[(ii)] The $(1, 1)$-component of the torsion tensor $T$ vanishes.
	\end{enumerate}
\end{proposition}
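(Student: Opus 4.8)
The plan is to exploit the hypothesis $\nabla J=0$, which guarantees that $\nabla$ preserves the splitting $T^{\mathbb{C}}M=T^{1,0}M\oplus T^{0,1}M$ and hence restricts to a linear connection on the holomorphic bundle $T^{1,0}M$. Splitting $\nabla=\nabla^{(1,0)}+\nabla^{(0,1)}$ according to the form-type of the covariant derivative, the operator $\nabla^{(0,1)}$ and the Dolbeault operator $\bar\partial$ on $T^{1,0}M$ are both first-order operators obeying the same Leibniz rule, so their difference $\beta:=\nabla^{(0,1)}-\bar\partial$ is $C^\infty(M)$-linear, i.e.\ a tensor with values in $\Lambda^{0,1}M\otimes\mathrm{End}(T^{1,0}M)$. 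Thus condition (i) is precisely the statement $\beta=0$, and the whole proposition amounts to identifying $\beta$ with a target-component of the $(1,1)$-part of the torsion.

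First I would compute the $(1,1)$-part of $T$ directly. For $X,Y\in\Gamma(T^{1,0}M)$ the only mixed-type pairing is $T(X,\bar Y)=\nabla_X\bar Y-\nabla_{\bar Y}X-[X,\bar Y]$; projecting onto $T^{1,0}M$ and using $\nabla_X\bar Y\in T^{0,1}M$ gives $\big(T(X,\bar Y)\big)^{1,0}=-\nabla^{(0,1)}_{\bar Y}X-[X,\bar Y]^{1,0}$. The key identity, which I would establish in local holomorphic coordinates, is that the Dolbeault operator on $T^{1,0}M$ is realized by the bracket projection, $(\bar\partial X)(\bar Y)=[\bar Y,X]^{1,0}=-[X,\bar Y]^{1,0}$; substituting this yields $\big(T(X,\bar Y)\big)^{1,0}=-\big(\nabla^{(0,1)}-\bar\partial\big)_{\bar Y}X=-\beta_{\bar Y}X$. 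Because $\nabla$ and $T$ are real (the complexification of a metric connection on $TM$), conjugation interchanges the $T^{1,0}$- and $T^{0,1}$-valued parts of the $(1,1)$-torsion, so the $(1,1)$-component of $T$ vanishes iff $\beta=0$, which is exactly the equivalence (i)$\iff$(ii).

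Alternatively, and more in keeping with the Cartan-structure-equation formalism set up above, I would verify the equivalence in a local holomorphic frame $\{\eta_i\}$ of $T^{1,0}M$, with dual holomorphic $(1,0)$-coframe $\{\varphi^i\}$. Here $\nabla J=0$ forces $\nabla\eta_i=\theta_i^{\,j}\eta_j$ with no $\bar\eta$-terms; writing $\theta_i^{\,j}=\theta_i^{\,j\,\prime}+\theta_i^{\,j\,\prime\prime}$ for its $(1,0)$ and $(0,1)$ parts, condition (i) reads $\theta_i^{\,j\,\prime\prime}=0$, since $\bar\partial\eta_i=0$ in a holomorphic frame. In such a frame $d\varphi^i=\partial\varphi^i$ is of pure type $(2,0)$, so matching the $(1,1)$-parts of the structure equation $d\varphi=-\theta\wedge\varphi+\tau$ gives $\tau^i_{(1,1)}=\theta_j^{\,i\,\prime\prime}\wedge\varphi^j$, while the $(0,2)$-part yields $\tau^i_{(0,2)}=0$ automatically.

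Finally I would read off the equivalence: condition (ii) is $\tau^i_{(1,1)}=0$, i.e.\ $\theta_j^{\,i\,\prime\prime}\wedge\varphi^j=0$; expanding $\theta_j^{\,i\,\prime\prime}$ in the anti-holomorphic coframe and using the linear independence of the $(1,1)$-forms $\varphi^j\wedge\bar\varphi^l$ forces $\theta_j^{\,i\,\prime\prime}=0$, which is condition (i), and the reverse implication is immediate. The conclusion, established in a convenient frame, holds globally because $T$ is tensorial by Proposition~\ref{TR independent frame}. The main obstacle I anticipate is bookkeeping rather than depth: one must decompose the torsion simultaneously by form-type ($(2,0),(1,1),(0,2)$) and by target-type ($T^{1,0}$ versus $T^{0,1}$), and invoke integrability of $J$ at the right places (to kill the $(0,2)$-torsion and to ensure $d\varphi^i$ contributes nothing spurious to the $(1,1)$-matching), together with the slightly delicate identification of the intrinsic Dolbeault operator with the bracket projection.
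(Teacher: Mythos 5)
Your proposal is correct, but there is no proof in the paper to compare it against: the paper states this proposition as a quoted standard fact, citing Kobayashi--Nomizu \cite{FoundationOfDG1}, and supplies no argument of its own. Judged on its merits, your proof is sound in both of its variants. The invariant version correctly observes that $\nabla J=0$ makes $\nabla$ restrict to $T^{1,0}M$, that $\beta=\nabla^{(0,1)}-\bar\partial$ is tensorial by the shared Leibniz rule, and that the key identity $(\bar\partial X)(\bar Y)=[\bar Y,X]^{1,0}$ (a local holomorphic-coordinate computation, legitimately using integrability of $J$, which holds since $M$ is a complex manifold) gives $\bigl(T(X,\bar Y)\bigr)^{1,0}=-\beta_{\bar Y}X$; your conjugation step then correctly disposes of the $T^{0,1}$-valued half of the $(1,1)$-torsion, since $\nabla$ is the complexification of a real connection, so the two target-type components are conjugate and vanish simultaneously. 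The frame version is the same equivalence in Cartan form: in a holomorphic frame $\bar\partial\eta_i=0$ identifies condition (i) with $\theta''=0$, the structure equation with $d\varphi^i$ of pure type $(2,0)$ yields $\tau^i_{(1,1)}=\theta_j^{\,i\,\prime\prime}\wedge\varphi^j$, and linear independence of the mixed-type monomials $\bar\varphi^l\wedge\varphi^j$ forces $\theta''=0$ from $\tau_{(1,1)}=0$ (no Cartan-lemma symmetrization subtlety arises, precisely because the two wedge factors have different types); this is the same mechanism the paper later exploits in the Finsler setting in Remark \ref{holomorphicframe}, where holomorphic frames kill $\Gamma^i_{\;j\bar l}$. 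One observation worth making explicit: you never use $\nabla h=0$, so your argument in fact proves the equivalence for an arbitrary real connection satisfying $\nabla J=0$, which is consistent with, and slightly more general than, the statement as quoted.
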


A Hermitian connection satisfying $\nabla^{(0, 1)} = \bar{\partial}$ is called the \textbf{Chern connection},  denoted by $\nabla^c$. A key property of the Chern connection
is that its torsion form is of type $(2, 0)$. This characterization will be crucial in defining the analogous Chern-Rund connection on the pull-back bundle $\pi^\ast T^{1,0}M$ in the Finsler geometry.

Now we introduce the fundamental concepts of complex Finsler geometry. We start with a linear algebra counterpart.

\begin{definition}
	Let $V$ be an $n$-dimensional complex vector space. A \textbf{complex Minkowski norm} on $V$ is a continuous function $F:V\rightarrow[0,+\infty)$ which is smooth on  $V\setminus \{0\}$ and satisfies
$$
F(\lambda v) = \vert\lambda\vert F(v)
$$
for all $v \in V$ and $\lambda \in \mathbb{C}$.
\end{definition}
The pair $(V,  F)$ is called a \textbf{complex Minkowski space}.

Given a basis $\{\alpha_i\}$ of $V$,  we write $v = v^i \alpha_i$ and $F(v) = F(v^1,  \ldots,  v^n)$. The \textbf{Levi matrix} $(g_{i\bar{\jmath}})$ associated to $F$ is defined by
$$
g_{i\bar{\jmath}}(v) = \frac{\partial^2 F^2}{\partial v^i \partial \bar{v}^j}.
$$
\begin{definition}
	A \textbf{complex Finsler metric} on a complex manifold $M$ of complex dimension $n$ is a continuous function $F: T^{1, 0}M \to[0,+\infty)$ such that its
	restriction $F_z(v):=F(z;v)$ to each fiber $T_z^{1, 0}M\cong\{z\}\times \mathbb{C}^n$ is a complex Minkowski norm. The pair $(M,  F)$ is called a \textbf{complex Finsler manifold}. In the special case
	where each $F_z$ arises from a Hermitian inner product,  $(M,  F)$ reduces to a Hermitian manifold.

\end{definition}
\begin{definition}
	A complex Finsler metric $F:T^{1,0}M\rightarrow[0,+\infty)$ on a complex manifold $M$ is called \textbf{strongly pseudoconvex} if its Levi matrix $(g_{i\bar{\jmath}}(z,  v))$ is positive definite for all $z \in M$ and
	all nonzero $v \in T_z^{1, 0}M$.

\end{definition}
\textbf{Convention:} Unless stated otherwise,  all complex Finsler metrics considered  in the following will be assumed to be strongly pseudoconvex.

\section{The complex linear connection on the pull-back tangent bundle}\label{pullback bundle}
\hspace{1em}

The complex Chern-Rund connection is a complex linear connection that acts the pull-back bundle $\pi^\ast T^{1,0}M$, sitting over  $\tilde{M}=T^{1,0}M\setminus\{\mbox{zero section}\}$.
We begin with the notion of complex linear connection on complex vector bundles.

First let $M$ be a smooth manifold of even real dimension  and $\mathfrak{X}(M)$ the set of all smooth vector fields on $M$.
We first study the connection coefficients of a complex linear connection $\nabla$ on a complex vector bundle over \( M \).
Let $\pi:E\rightarrow M$ be a real vector bundle of rank \(2n\) over  \(M\), equipped with
a real linear connection \(\nabla^{\mathbb{R}}: \Gamma(E) \times \mathfrak{X}(M) \to \Gamma(E)\) and
an almost complex structure \(J: E \to E\) satisfying \(J^2 = -\mathrm{Id}_E\).

The complexification of \(E\) is defined as
$
E^{\mathbb{C}} = E \otimes_{\mathbb{R}} \mathbb{C}.
$
The almost complex structure \(J\) extends naturally to \(E^{\mathbb{C}}\) as a \(\mathbb{C}\)-linear operator:
\[
J^{\mathbb{C}}(v \otimes \mu) = J(v) \otimes \mu, \quad v \in E, \ \mu\in \mathbb{C}.
\]

The real connection \(\nabla^{\mathbb{R}}\) extends to a complexified linear connection
\[
\nabla^{\mathbb{C}}: \Gamma(E^{\mathbb{C}}) \times \mathfrak{X}(M) \to \Gamma(E^{\mathbb{C}})
\]
defined by
\[
\nabla^{\mathbb{C}}_X(\xi \otimes\phi) = (\nabla^{\mathbb{R}}_X \xi) \otimes \phi + \xi \otimes X(\phi),
\]
for \(\xi \in \Gamma(E)\) and \(\phi\in C^\infty(M,\mathbb{C})\).

Let \(\{f_1, \dots, f_n, Jf_1, \dots, Jf_n\}\) be a local real frame for \(E\) adapted to \(J\). Define the complex local frame of $E^{\mathbb{C}}$
as follow
\[
\mathcal{E} _i := f_i - \sqrt{-1}Jf_i, \quad \bar{\mathcal{E}}_i := f_i + \sqrt{-1}Jf_i, \quad i=1,\dots,n.
\]
which satisfy
$
J^{\mathbb{C}}(\mathcal{E}_i) = \sqrt{-1} \mathcal{E}_i, J^{\mathbb{C}}(\bar{\mathcal{E}}_i) = -\sqrt{-1} \bar{\mathcal{E}}_i.
$
This gives the decomposition
\[
E^{\mathbb{C}} = E^{1,0} \oplus E^{0,1}.
\]

Any section \(\xi \in \Gamma(E^{\mathbb{C}})\) can be written uniquely as
\[
\xi = \xi^i \mathcal{E}_i + \bar{\zeta}^i \bar{\mathcal{E}}_i,
\]
where \(\xi^i\) and \(\bar{\zeta}^i\) are  complex-valued functions on \(M\), note that $\xi\in\Gamma(E)$ iff $\bar{\xi}^i=\bar{\zeta}^i$.

Define the complex-valued connection 1-forms by
\[
\nabla^{\mathbb{C}} \mathcal{E}_j = \omega_j^i \otimes \mathcal{E}_i + \omega_j^{\bar{\imath}} \otimes \bar{\mathcal{E}}_i,\quad
\nabla^{\mathbb{C}} \bar{\mathcal{E}}_j = \omega_{\bar{\jmath}}^i \otimes \mathcal{E}_i + \omega_{\bar{\jmath}}^{\bar{\imath}} \otimes \bar{\mathcal{E}}_i.
\]
Then for any section \(\xi = \xi^i \mathcal{E}_i + \bar{\zeta}^{i} \bar{\mathcal{E}}_i\), we have
\[
\begin{aligned}
\nabla^{\mathbb{C}} \xi &= \left[d\xi^i + \xi^j \omega_j^i + \bar{\zeta}^{j} \omega_{\bar{\jmath}}^i\right] \otimes \mathcal{E}_i
+ \left[d\bar{\zeta}^{i} + \bar{\zeta}^{j} \omega_{\bar{\jmath}}^{\bar{\imath}} + \xi^j \omega_j^{\bar{\imath}}\right] \otimes \bar{\mathcal{E}}_i,
\end{aligned}
\]
where \(\omega_j^i, \omega_{\bar{\jmath}}^{\bar{\imath}}, \omega_{\bar{\jmath}}^i, \omega_j^{\bar{\imath}}\) are local complex-valued 1-forms on \(M\).

The complexified connection \(\nabla^{\mathbb{C}}\) is said to be \textbf{complex linear or compatible with \(J^{\mathbb{C}}\)} if
\begin{equation}\label{complexified connection}
	\nabla^{\mathbb{C}}_X \circ J^{\mathbb{C}} = J^{\mathbb{C}} \circ \nabla^{\mathbb{C}}_X \quad \text{for all } X \in \mathfrak{X}(M).
\end{equation}

Note that
\begin{eqnarray*}
\nabla^{\mathbb{C}}_X(J^{\mathbb{C}} \mathcal{E}_j) &=& \sqrt{-1}\left[\omega_j^i(X) \mathcal{E}_i + \omega_j^{\bar{\imath}}(X) \bar{\mathcal{E}}_i\right],\\
J^{\mathbb{C}}(\nabla^{\mathbb{C}}_X \mathcal{E}_j) &=& \sqrt{-1}\left[\omega_j^i(X) \mathcal{E}_i -\omega_j^{\bar{\imath}}(X) \bar{\mathcal{E}}_i\right].
\end{eqnarray*}
Thus \eqref{complexified connection} holds iff $\omega_j^{\bar{\imath}} \equiv 0$ and $\omega_{\bar{\jmath}}^i \equiv 0$.

Thus, for a complex linear connection, the mixed-type connection $1$-forms vanish, and for any section \(\xi = \xi^i \mathcal{E}_i + \bar{\zeta}^{i} \bar{\mathcal{E}}_i\),
the local expression simplifies to
\[
\nabla^{\mathbb{C}} \xi = \left[d\xi^i + \xi^j \omega_j^i\right] \otimes \mathcal{E}_i + \left[d\bar{\zeta}^{i} + \bar{\zeta}^{j} \omega_{\bar{\jmath}}^{\bar{\imath}}\right] \otimes \bar{\mathcal{E}}_i.
\]
Since \(\nabla^{\mathbb{C}}\) arises from a real connection, it preserves complex conjugation, namely \(\nabla^{\mathbb{C}} \bar{\xi} = \overline{\nabla^{\mathbb{C}} \xi}\). This implies
$
\omega_{\bar{\jmath}}^{\bar{\imath}} = \overline{\omega_j^i}.
$
Therefore, the connection is completely determined by a set of complex-valued 1-forms \(\{\omega_j^i\}\). From now on, we denote this complex linear connection $\nabla^{\mathbb{C}}$ simply by \(\nabla\).

The \textbf{curvature forms} $\{\Omega_j^i\}$ of a complex linear connection $\nabla$  with respect to the frame $\{\mathcal{E}_i\}$ are then defined by the following structure equation
$$
\Omega_j^i := d\omega_j^i - \omega_j^k \wedge \omega_k^i.
$$

Now we introduce the notion of complex Chern-Rund connection which is a complex linear connection on the pull-back tangent bundle $\pi^\ast T^{1,0}M$. More precisely, let $M$ be a connected complex manifold equipped with a canonical complex structure $J$, so that the complexified tangent bundle splits as $T^{\mathbb{C}}M = T^{1,0}M \oplus T^{0,1}M$.

The \textbf{slit holomorphic tangent bundle} is defined as $\tilde{M} := T^{1,0}M \setminus \{\text{zero section}\}$. The natural projection $\pi: \tilde{M} \to M$ allows us to define
the \textbf{pull-back tangent bundle} $\pi^*T^{1,0}M$, whose fiber at a point $v \in \tilde{M}$ is
$$
\pi^*T^{1,0}M\vert_{v} := \{ (z, v; w) \mid w \in T_z^{1,0}M \} \cong T_z^{1,0}M.
$$
Similarly, one defines the \textbf{pull-back cotangent bundle} $\pi^*T^{*1,0}M$, which is naturally dual to $\pi^* T^{1,0}M$ via the dual pairing $\langle (z,v;\theta), (z,v;w) \rangle = \theta(w)$.

Consider a local coordinate system $\{U, (z^i, v^i)\}$ on the holomorphic tangent bundle $T^{1,0}M$. Here, $(z^i)$ is a local holomorphic coordinate chart on the base manifold $M$,
and the coefficients $v^i$ are such that a vector $v \in T^{1,0}_z M$ is expressed as $v = v^i \left.\frac{\partial}{\partial z^i}\right\vert_z$.
The set $\left\{\frac{\partial}{\partial z^i}, \frac{\partial}{\partial v^i}\right\}$ forms the natural local frame for the tangent bundle $T^{1,0}\tilde{M}$,
while $\{dz^i, dv^i\}$ is the corresponding local coframe for the cotangent bundle $T^{*1,0}\tilde{M}$.

%Let $\{V, (\tilde{z}^i, \tilde{v}^i)\}$ be another local coordinate system on $T^{1,0}M$. On the intersection $U \cap V$, the transition relations for the holomorphic tangent
%bundle $T^{1,0}(T^{1,0}M_o)$ are given by
%\begin{equation}
%\begin{bmatrix}
%\frac{\partial}{\partial \tilde{z}^j} \\
%\frac{\partial}{\partial \tilde{v}^j}
%\end{bmatrix}
%=
%\begin{bmatrix}
%\frac{\partial z^i}{\partial \tilde{z}^j} & 0 \\
%\frac{\partial^2 z^i}{\partial \tilde{z}^j \partial \tilde{z}^k} \tilde{v}^k  & \frac{\partial z^i}{\partial \tilde{z}^j}
%\end{bmatrix}
%\begin{bmatrix}
%\frac{\partial}{\partial z^i} \\
%\frac{\partial}{\partial v^i}
%\end{bmatrix}.
%\end{equation}
Let \(\{V, (\tilde{z}^i, \tilde{v}^i)\}\) be  another holomorphic coordinate chart on \(\tilde{M}\), the coordinates on $\pi^{-1}(U\cap V)$ are related by
\begin{equation}
	z^i=z^i(\tilde{z}) , \quad v^i = \frac{\partial z^i}{\partial \tilde{z}^j} \tilde{v}^j,
\end{equation}
where \(\left( \frac{\partial z^i}{\partial \tilde{z}^j} \right)\) are the transition functions of the holomorphic tangent bundle \(T^{1,0}M\) of the base manifold \(M\).

Now, we define a local section of the \textbf{pull-back bundle} $\pi^* T^{1,0}M$ by
$$
\partial_i := \left(z, v; \left.\frac{\partial}{\partial z^i}\right\vert_z\right).
$$
Note that $\pi^* T^{1,0}M$ possesses a canonical section $\iota$, defined by $\iota\vert_{v} = (z, v; v)$, namely $\iota = v^i \partial_i$.

Let $F$ be a complex Finsler metric on $M$ and let $\pi : T^{1,0}M \to M$ be the natural projection.
Denote by $\{\varepsilon_i\}$ an arbitrary local frame of $T^{1,0}M$ and define the pulled-back frame of $\pi^* T^{1,0}M$ by
\[
e_i := \pi^* \varepsilon_i, \qquad i = 1, \dots, n.
\]

Thus, for a point $v \in T^{1,0}M$, we have
\[
e_i\vert_{v} = \bigl(p, v;\, \varepsilon_i\vert_p\bigr) \in \pi^* T^{1,0}M\vert_{ v} \cong T^{1,0}_p M.
\]
Let $\{\varphi^i\}$ be the dual coframe of $\pi^* T^{*1,0}M$ corresponding to $\{e_i\}$.

Define the following coefficients from the derivatives of $F^2$ with respect to fiber coordinates:
\[
g_{i\bar{\jmath}} := \frac{\partial^2 F^2}{\partial v^i \partial \bar{v}^j}(z,v), \quad
C_{i\bar{\jmath}l} := \frac{\partial^3 F^2}{\partial v^i \partial \bar{v}^j \partial v^l}(z,v), \quad
C_{i\bar{\jmath}\bar{l}} := \frac{\partial^3 F^2}{\partial v^i \partial \bar{v}^j \partial \bar{v}^l}(z,v),
\]
which are smooth on $\tilde{M}$. It is easy  to check that
\[
\begin{aligned}
&\mathcal{G} := g_{i\bar{\jmath}} \, \varphi^i \otimes \bar{\varphi}^j, \\
\mathcal{C}_v^+ := C_{i\bar{\jmath}l} \, &\varphi^i \otimes \bar{\varphi}^j \otimes \varphi^l,
\mathcal{C}_v^- := C_{i\bar{\jmath}\bar{l}} \, \varphi^i \otimes \bar{\varphi}^j \otimes \bar{\varphi}^l
\end{aligned}
\]
 are well-defined on $\tilde{M}$. $\mathcal{G}$ is called the \textbf{fundamental tensor} of $F$, $\mathcal{C}_v^+$ and $\mathcal{C}_v^-$ are called the \textbf{complex Cartan tensor} and the \textbf{conjugate complex Cartan tensor} of $F$, respectively.

Now we are able to state the following theorem
\begin{theorem}[Complex Chern-Rund]\label{Chern Rund Theorem}
	Let $(M,F)$ be a complex Finsler manifold of complex dimension $n$. Then for any local frame $\{e_i\}$ for $\pi^* T^{1,0}M$ and
its dual coframe $\{\varphi^i\}$ for $\pi^* T^{*1,0}M$, there exists a unique set of horizontal connection 1-forms $\{\omega_j^i\}$ defined on $\tilde{M}$  satisfying
\begin{enumerate}
\item[(1)]  \textbf{First Structure Equation:}
\begin{equation}\label{Rund_sturcture_equation}
	d\varphi^i = \varphi^j \wedge \omega_j^i + \tau^i,
\end{equation}
where the $(1,1)$-components of $\tau^i$ vanish identically.

\item[(2)]  \textbf{Almost Metric Compatibility:}
\begin{equation}\label{Rund metric compatibiity}
	dg_{i\bar{\jmath}} = g_{l\bar{\jmath}} \omega^l_i + g_{i\bar{l}} \omega^{\bar{l}}_{\bar{\jmath}} + C_{i\bar{\jmath}l} \boldsymbol{\varphi}^{n+l} + C_{i \bar{\jmath} \bar{l}} \bar{\boldsymbol{\varphi}}^{n+l},
\end{equation}
where
$$g_{i\bar{\jmath}} = \mathcal{G}(e_i, \bar{e}_j), \quad C_{i\bar{\jmath}l} = \mathcal{C}_v^+(e_i, \bar{e}_j, e_l),\quad C_{\bar{\imath} j \bar{l}} = \overline{C_{i \bar{\jmath} l}} $$
and $\boldsymbol{\varphi}^{n+i} = dv^i + v^j \omega^i_j$ and $\bar{\boldsymbol{\varphi}}^{n+i} = d\bar{v}^i+\bar{v}^s \omega^{\bar{\imath}}_{\bar{s}}$.
\end{enumerate}
\end{theorem}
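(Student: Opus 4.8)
The plan is to follow the classical existence-and-uniqueness proof of the Hermitian Chern connection, tracking the additional terms contributed by the complex Cartan tensor $\mathcal{C}_v^+$, which are exactly what separates the Finsler case from the Hermitian one. I would carry out the computation in a local holomorphic coordinate chart, where the pulled-back coframe is $\varphi^i=dz^i$ and hence $d\varphi^i=0$. Since the torsion is tensorial (Proposition \ref{TR independent frame}) and the almost metric compatibility is an intrinsic condition on $\nabla$, a set of forms satisfying (1)--(2) in a coordinate frame satisfies them in every frame; it therefore suffices to produce and uniquely determine the connection coefficients in a chart.

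For uniqueness, write the horizontal forms as $\omega_j^i=\Gamma^i_{jk}\,dz^k+\Gamma^i_{j\bar k}\,d\bar z^k$ and decompose the first structure equation \eqref{Rund_sturcture_equation} by bidegree. As $d\varphi^i=0$, one has $\tau^i=-\varphi^j\wedge\omega_j^i$, whose $(1,1)$-component is $-\Gamma^i_{j\bar k}\,dz^j\wedge d\bar z^k$; its vanishing forces $\Gamma^i_{j\bar k}=0$, so $\omega_j^i$ is of type $(1,0)$. I would then expand \eqref{Rund metric compatibiity} after substituting $\boldsymbol{\varphi}^{n+l}=dv^l+v^m\omega_m^l$. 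The $dv^l$- and $d\bar v^l$-coefficients match automatically from $C_{i\bar{\jmath}l}=\partial g_{i\bar{\jmath}}/\partial v^l$ and $C_{i\bar{\jmath}\bar l}=\partial g_{i\bar{\jmath}}/\partial\bar v^l$, while the $dz^k$-coefficients yield the key identity
\[
\frac{\partial g_{i\bar{\jmath}}}{\partial z^k}=g_{l\bar{\jmath}}\,\Gamma^l_{ik}+C_{i\bar{\jmath}l}\,v^m\Gamma^l_{mk},
\]
the $d\bar z^k$-equation being its conjugate.

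The main obstacle is the coupling between $\Gamma^l_{ik}$ and the combination $\Gamma^l_k:=v^m\Gamma^l_{mk}$ caused by the Cartan term, a coupling absent in the Hermitian case where $\mathcal{C}_v^+\equiv0$. To break it I would invoke the homogeneity of $F$, which gives $v^iC_{i\bar{\jmath}l}=0$. Contracting the key identity with $v^i$ annihilates the Cartan term and leaves $v^i\,\partial g_{i\bar{\jmath}}/\partial z^k=g_{l\bar{\jmath}}\,\Gamma^l_k$; strong pseudoconvexity guarantees that $(g_{i\bar{\jmath}})$ is invertible, so with inverse $(g^{\bar{\jmath}l})$ this determines $\Gamma^l_k=g^{\bar{\jmath}l}\,v^i\,\partial g_{i\bar{\jmath}}/\partial z^k$ uniquely. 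Substituting back produces the closed form
\[
\Gamma^l_{ik}=g^{\bar{\jmath}l}\!\left(\frac{\partial g_{i\bar{\jmath}}}{\partial z^k}-C_{i\bar{\jmath}s}\,\Gamma^s_k\right)=g^{\bar{\jmath}l}\,\frac{\delta g_{i\bar{\jmath}}}{\delta z^k},\qquad \frac{\delta}{\delta z^k}:=\frac{\partial}{\partial z^k}-\Gamma^s_k\frac{\partial}{\partial v^s},
\]
which proves uniqueness; consistency, i.e. $v^i\Gamma^l_{ik}=\Gamma^l_k$, follows once more from $v^iC_{i\bar{\jmath}l}=0$.

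For existence I would reverse the argument: take the formulas above as the definition of $\Gamma^l_{ik}$ and set $\omega_j^i=\Gamma^i_{jk}\,dz^k$. Then \eqref{Rund metric compatibiity} holds because its $dz^k$-, $d\bar z^k$-, $dv^k$- and $d\bar v^k$-coefficients all match by construction, and \eqref{Rund_sturcture_equation} holds because $\tau^i=-dz^j\wedge\Gamma^i_{jk}\,dz^k$ is of pure type $(2,0)$, so its $(1,1)$-part vanishes. Finally I would check the transformation law of $\Gamma^l_{ik}$ under a change of holomorphic coordinates to confirm that the locally defined forms glue into a global connection on $\pi^*T^{1,0}M$, and, using Proposition \ref{TR independent frame}, that conditions (1)--(2) continue to hold in an arbitrary frame $\{e_i\}$, completing the proof.
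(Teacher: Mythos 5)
Your coordinate-frame computation is correct and, in that frame, reproduces the paper's formulas exactly: the vanishing of the $(1,1)$-part of $\tau^i$ forces $\Gamma^i_{\;j\bar k}=0$, and the metric-compatibility equation, decoupled by the homogeneity identity $v^iC_{i\bar{\jmath}l}=0$, yields first $N^l_{\;k}$ and then $\Gamma^l_{\;ik}=g^{\bar{\jmath}l}\bigl(\partial g_{i\bar{\jmath}}/\partial z^k-C_{i\bar{\jmath}s}N^s_{\;k}\bigr)$, which is precisely \eqref{ccc}; the paper uses the same identity implicitly when it contracts \eqref{G_{jk}+N} with $v^i$ and the term $C_{i\bar{\jmath}l}v^t\Gamma^l_{\;tm}a^m_kv^i$ drops out. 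Within the coordinate frame your existence and uniqueness logic is sound.

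The gap is in the reduction step. The theorem quantifies over an arbitrary smooth local frame $\{e_i\}$ of $\pi^*T^{1,0}M$, and the paper's proof works directly in such a frame, $e_i=b_i^j(z,v)\partial_j$, $\varphi^i=a^i_j(z,v)\,dz^j$. Two phenomena appear there that your argument cannot see. First, the structure equation forces $\partial a^i_j/\partial v^k=\partial a^i_j/\partial\bar v^k=0$ (equation \eqref{aaa}): the conditions constrain which frames admit a horizontal solution at all, and for a genuinely fiber-dependent frame the transformation law you invoke destroys horizontality, so nothing transfers. Second, for a fiber-independent but non-holomorphic frame the unique solution has nonzero mixed coefficients $\Gamma^i_{\;j\bar l}=-b^m_j\bar b^k_l\,\partial a^i_m/\partial\bar z^k$ (equation \eqref{BBB}), so the type-$(1,0)$ conclusion you obtain in coordinates is false in general frames; compare Remark \ref{holomorphicframe}. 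Your appeal to Proposition \ref{TR independent frame} does not bridge this: that proposition concerns the Lie-bracket torsion of a connection on the tangent bundle, whereas $\tau^i=d\varphi^i-\varphi^j\wedge\omega^i_j$ lives on the pull-back bundle, and the covariance of conditions (1)--(2) under frame changes (in particular of \eqref{Rund metric compatibiity}, through $\boldsymbol{\varphi}^{n+i}$) is exactly what must be proved, not cited --- it is the content of the Corollary following the theorem and of the paper's general-frame computation. This is not a dispensable generality: in Section \ref{LieGroup} the theorem is applied to left-invariant frames on Lie groups, which are not holomorphic, and there the nonzero mixed coefficients $\Gamma^i_{\;j\bar k}=-\lambda^i_{\;j\bar k}$ carry the structure constants; a proof that treats only the holomorphic frame and defers the transfer misses the case the paper actually uses.
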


\begin{proof}
	We work in a local complex coordinate chart $(z^i, v^i)$ on $T^{1,0}M$.  Let $\{e_i\}$ be an arbitrary local frame of $\pi^* T^{1,0}M$,
which can be expressed in terms of the coordinate frame $\left\{\partial_i\right\}$ as
\[
e_i = b_i^j(z,v) \partial_j,
\]
where $(b_i^j)$ is an invertible matrix of smooth complex-valued functions on $T^{1,0}M$.
Let $\{\varphi^i\}$ be the dual coframe, satisfying $\varphi^i(e_j) = \delta^i_j$.
Then there exists a matrix $(a^i_j)$ such that
\begin{equation}
\varphi^i = a^i_j(z,v) dz^j,\label{phi}
\end{equation}
where $(a^i_j)$ is the inverse of $(b_i^j)$ such that $a^i_k b^k_j = \delta^i_j$.

Since the connection 1-forms $\omega_j^i$ are required to be horizontal, we can write
\begin{equation}
\omega_j^i = \Gamma^i_{\; jk} \varphi^k + \Gamma^i_{\; j\bar{k}} \bar{\varphi}^k,\label{wij}
\end{equation}
where $\Gamma_{\;jk}^i$ and $\Gamma_{\;j\bar{k}}^i$ are to be determined.

%We now impose the three conditions of the theorem to determine the coefficients $\Gamma^i_{\ jk}$ and $\Gamma^i_{ j\bar{k}}$ uniquely.

Using
\[
d\varphi^i = d(a^i_j dz^j) = da^i_j \wedge dz^j
\]
and writing
\[
da^i_j = \frac{\partial a^i_j}{\partial z^k} dz^k + \frac{\partial a^i_j}{\partial \bar{z}^k} d\bar{z}^k
+ \frac{\partial a^i_j}{\partial v^k} dv^k + \frac{\partial a^i_j}{\partial \bar{v}^k} d\bar{v}^k,
\]
we obtain
\[
d\varphi^i = \frac{\partial a^i_j}{\partial z^k} dz^k \wedge dz^j + \frac{\partial a^i_j}{\partial \bar{z}^k} d\bar{z}^k \wedge dz^j
+ \frac{\partial a^i_j}{\partial v^k} dv^k \wedge dz^j + \frac{\partial a^i_j}{\partial \bar{v}^k} d\bar{v}^k \wedge dz^j.
\]

%The first structure equation \eqref{Rund_sturcture_equation} is
%\[
%d\varphi^i = \varphi^j \wedge \omega_j^i + \tau^i,
%\]
%where the torsion 2-form $\tau^i$ has vanishing $(1,1)$-part. Substituting the expressions for $\varphi^j$ and $\omega_j^i$ gives
On the other hand, using \eqref{phi} and \eqref{wij}, we obtain
\[
\varphi^j \wedge \omega_j^i = a^j_k \Gamma^i_{\;jl} a^l_m dz^k \wedge dz^m + a^j_k \Gamma^i_{\;j\bar{l}} \bar{a}^l_m dz^k \wedge d\bar{z}^m.
\]
By the first structure equation \eqref{Rund_sturcture_equation}, we obtain
\begin{equation}
\frac{\partial a^i_j}{\partial \bar{z}^k} d\bar{z}^k \wedge dz^j = a^j_k \Gamma^i_{\;j\bar{l}} \bar{a}^l_m dz^k \wedge d\bar{z}^m,\quad\frac{\partial a^i_j}{\partial v^k}= \frac{\partial a^i_j}{\partial \bar{v}^k}=0,\label{aaa}
\end{equation}
which implies $a_j^i$ are independent of fiber coordinates $v$ and $\bar{v}$, and
\begin{equation}
\Gamma^i_{\,j\bar{l}} = - b^m_j \bar{b}^k_l \frac{\partial a^i_m}{\partial \bar{z}^k}.\label{BBB}
\end{equation}
Thus the connection $1$-forms reduce to
\begin{equation}\label{CRTheorem equation 2}
	\omega^i_j = \Gamma^i_{\;jk} \varphi^k - b^m_j \bar{b}^k_l \frac{\partial a^i_m}{\partial \bar{z}^k} \bar{\varphi}^l.
\end{equation}

The metric compatibility condition \eqref{Rund metric compatibiity} reads
\[
dg_{i\bar{\jmath}} = g_{l\bar{\jmath}} \omega^l_i + g_{i\bar{l}} \omega^{\bar{l}}_{\bar{\jmath}} + C_{i\bar{\jmath}l} \boldsymbol{\varphi}^{n+l} + C_{i\bar{\jmath}\bar{l}} \bar{\boldsymbol{\varphi}}^{n+l},
\]
where $g_{i\bar{\jmath}} = \mathcal{G}(e_i, \bar{e}_j)$ and $C_{i\bar{\jmath}l} = \mathcal{C}_v^+(e_i, \bar{e}_j, e_l)$. Expanding both sides of the above equation, we get
\begin{align*}
	&\frac{\partial g_{i\bar{\jmath}}}{\partial z^k} dz^k + \frac{\partial g_{i\bar{\jmath}}}{\partial \bar{z}^k} d\bar{z}^k +
\frac{\partial g_{i\bar{\jmath}}}{\partial v^k} dv^k + \frac{\partial g_{i\bar{\jmath}}}{\partial \bar{v}^k} d\bar{v}^k\\
&=g_{l\bar{\jmath}}\left(\Gamma^l_{\;ik}a^k_m dz^m-b^m_i\frac{\partial a^l_m}{\partial\bar{z}^k} d\bar{z}^k\right)
+g_{i\bar{l}}\left(\overline{\Gamma^l_{\;jk}}\bar{a}^k_m d\bar{z}^m-\bar{b}^m_j \frac{\partial \bar{a}^l_m}{\partial z^k}dz^k\right)\\
&\quad +C_{i\bar{\jmath}l}\left(dv^l + v^s (\Gamma^l_{\;sk}a^k_m dz^m-b^m_s\frac{\partial a^l_m}{\partial\bar{z}^k} d\bar{z}^k)\right)\\
&\quad +C_{i \bar{\jmath} \bar{l}}\left(d\bar{v}^l + \bar{v}^s(\overline{\Gamma^l_{\;sk}}\bar{a}^k_m d\bar{z}^m-\bar{b}^m_s \frac{\partial \bar{a}^l_m}{\partial z^k} dz^k)\right).
\end{align*}

Comparing the coefficients of $dz^k$ on both sides of the above equation yields
\begin{align}\label{G_{jk}+N}
\frac{\partial g_{i\bar{\jmath}}}{\partial z^k} = g_{l\bar{\jmath}} \Gamma^l_{\;is} a^s_k - g_{i\bar{l}} \bar{b}^m_j  \frac{\partial \bar{a}^l_m}{\partial z^k}
+ C_{i\bar{\jmath}l} v^t \Gamma^l_{\;tm} a^m_k - C_{i\bar{\jmath}\bar{l}} \bar{v}^t \bar{b}^m_t  \frac{\partial \bar{a}^l_m}{\partial z^k}.
\end{align}

Denote $N^j_{\;k} := \Gamma^j_{\;ik} v^i$. Contracting the above equation with $v^i$ gives
\[
\frac{\partial g_{i\bar{\jmath}}}{\partial z^k} v^i = g_{l\bar{\jmath}} N^l_{\;s} a^s_k - g_{i\bar{l}} \bar{b}^m_j \frac{\partial \bar{a}^l_m}{\partial z^k} v^i
- C_{i\bar{\jmath}\bar{l}} \bar{v}^t \bar{b}^m_t \frac{\partial \bar{a}^l_m}{\partial z^k} v^i.
\]
Solving for $N^l_{\;s}$, we obtain
\[
N^q_{\;s} = g^{\bar{\jmath}q} b^k_s \left( \frac{\partial g_{i\bar{\jmath}}}{\partial z^k} v^i + g_{i\bar{l}} \bar{b}^m_j \frac{\partial \bar{a}^l_m}{\partial z^k} v^i
+ C_{i\bar{\jmath}\bar{l}} \bar{v}^t \bar{b}^m_t \frac{\partial \bar{a}^l_m}{\partial z^k} v^i \right).
\]

Substituting the expression for $N^l_{\;s}$  into the equation \eqref{G_{jk}+N} and solving for $\Gamma^l_{\;is}$, we obtain
\begin{equation}\label{CRTheorem equation 6}
	\Gamma^q_{\;is} = g^{\bar{\jmath}q} b^k_s \left( \frac{\partial g_{i\bar{\jmath}}}{\partial z^k} + g_{i\bar{l}} \bar{b}^m_j \frac{\partial \bar{a}^l_m}{\partial z^k}
- C_{i\bar{\jmath}l} N^l_{\;m} a^m_k + C_{i\bar{\jmath}\bar{l}} \bar{v}^j \bar{b}^m_j \frac{\partial \bar{a}^l_m}{\partial z^k} \right).
\end{equation}

 Note that \eqref{BBB} and \eqref{CRTheorem equation 6} show the existence of connection $1$-forms $\omega_j^i$ satisfying the first structure equation \eqref{Rund_sturcture_equation} and the almost metric-compatibility \eqref{Rund metric compatibiity}.  The uniqueness follows from the deterministic nature of the steps leading to (\ref{CRTheorem equation 2}) and (\ref{CRTheorem equation 6}).
\end{proof}

\begin{remark}\label{holomorphicframe}
If   $\{e_i\}$ is a holomorphic local frame for $\pi^\ast T^{1,0}M$ and $\{\varphi^i\}$ is a holomorphic dual local frame for $\pi^\ast T^{\ast 1,0}M$, then $\frac{\partial \bar{a}^l_m}{\partial z^k}=0$. Hence by \eqref{BBB}, we have $\Gamma_{\;j\bar{l}}^i=0$, and by \eqref{CRTheorem equation 6}, we have
\begin{equation}
\Gamma^l_{\;ik} = g^{\bar{\jmath}l} \left(\frac{\partial g_{i\bar{\jmath}}}{\partial z^k} -C_{i\bar{\jmath}s} N^s_{\;k}\right),\label{ccc}
\end{equation}
so that the connection $1$-forms $\omega_j^i=\Gamma_{\;jk}^i\varphi^j$ are horizontal $1$-forms of type $(1,0)$. In particular, under the local frame $\{\frac{\partial}{\partial z^i} \}$ for $\pi^\ast T^{1,0}M$ and it dual frame $\{dz^i\}$ for $\pi^\ast T^{\ast 1,0}M$, it follows that the $(2,0)$-torsion
$$
\tau^i=-\Gamma^{i}_{\;jk} dz^j \wedge dz^k = 0
$$
iff
$$
\Gamma^i_{\;jk} = \Gamma^i_{\;kj},
$$
namely $F$ is a \textbf{K\"ahler-Finsler metric} \cite{AbateAndPatrizio}.
Note that if $\{e_i\}$ is only a smooth local frame for $\pi^\ast T^{1,0}M$,  then possibly we have $\Gamma_{\;j\bar{l}}^i\neq 0$ and $\omega_j^i$ are not necessary horizontal $1$-forms of type $(1,0)$.
\end{remark}

To compute the curvature of the complex Chern-Rund connection, we first prove that the frame
 $\bigl\{\varphi^i,\;\boldsymbol{\varphi}^{\,n+i}\bigr\}$ is well-defined on $\tilde{M}$.
%Building upon Proposition \ref{TR independent frame}, we recall that the curvature of a connection is a tensorial object.
%This guarantees that its type, as a differential form, is independent of the choice of local frame.
%This property is crucial as we now compute the curvature of the Chern-Rund connection.
\begin{corollary}
Under the same hypotheses as Theorem \ref{Chern Rund Theorem}, the $2n$ complex $1$-forms
\[
\bigl\{\varphi^i,\;\boldsymbol{\varphi}^{\,n+i}\bigr\}_{i=1}^n,
\qquad
\boldsymbol{\varphi}^{\,n+i}:=dv^i+v^{\,j}\omega^i_{j}
\]
are linearly independent at every point of $\tilde{M}$ and  well-defined global $1$ forms on $\tilde{M}$.
\end{corollary}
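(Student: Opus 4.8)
The plan is to verify the two assertions by working in the natural coordinate frame $\{\partial_i\}$ of $\pi^\ast T^{1,0}M$, for which the connection forms $\omega^i_j$ are those of Theorem \ref{Chern Rund Theorem} and $\boldsymbol{\varphi}^{\,n+i} = dv^i + v^j\omega^i_j$. The linear independence will come from a simple block-triangular observation, while the global well-definedness will be deduced by recognizing $\{\boldsymbol{\varphi}^{\,n+i}\}$ as the coordinate components of a single globally defined tensor, namely the covariant derivative of the canonical section $\iota$.

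For linear independence, I would expand every form in the coframe $\{dz^i, d\bar z^i, dv^i, d\bar v^i\}$ of $T^{\ast\mathbb{C}}\tilde{M}$. By \eqref{phi} we have $\varphi^i = a^i_j\, dz^j$ with $(a^i_j)$ invertible, so the $\varphi^i$ involve only the $dz^j$; by \eqref{CRTheorem equation 2} each $\boldsymbol{\varphi}^{\,n+i}$ is the unique form in the family containing $dv^i$, and it does so with unit coefficient. Hence in any vanishing combination $\lambda_i\varphi^i + \mu_i\boldsymbol{\varphi}^{\,n+i} = 0$, comparing the $dv^i$-coefficients forces $\mu_i = 0$, after which $\lambda_i a^i_j\, dz^j = 0$ together with the invertibility of $(a^i_j)$ gives $\lambda_i = 0$. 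This holds at every point of $\tilde{M}$.

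For global well-definedness, I would use that $\iota = v^i\partial_i$ is the canonical section of $\pi^\ast T^{1,0}M$, which is coordinate-independent, and that $\nabla$ is the unique complex Chern-Rund connection of Theorem \ref{Chern Rund Theorem}; hence $\nabla\iota$ is a globally defined $\pi^\ast T^{1,0}M$-valued $1$-form on $\tilde{M}$. A direct computation in the coordinate frame gives
\begin{equation*}
\nabla\iota = dv^j\otimes\partial_j + v^i\,\omega^j_i\otimes\partial_j = \boldsymbol{\varphi}^{\,n+j}\otimes\partial_j,
\end{equation*}
so the $\boldsymbol{\varphi}^{\,n+j}$ are exactly the components of $\nabla\iota$ relative to $\{\partial_j\}$. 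Since the $\{\partial_j\}$ transform by the holomorphic transition functions $\partial z^i/\partial\tilde z^j$ of $T^{1,0}M$, the $\boldsymbol{\varphi}^{\,n+j}$ must transform contravariantly by the same functions and therefore patch to global $1$-forms; the $\varphi^i$ are the pulled-back dual coframe and transform by the same transition functions, so they too are globally well-defined.

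The hard part will be precisely the well-definedness of $\boldsymbol{\varphi}^{\,n+i}$, because $dv^i$ alone is \emph{not} a globally defined $1$-form on $\tilde{M}$: under $v^i = (\partial z^i/\partial\tilde z^j)\tilde v^j$ it acquires an inhomogeneous anomaly $(\partial^2 z^i/\partial\tilde z^j\partial\tilde z^k)\tilde v^j\, d\tilde z^k$. The content of the statement is that the connection correction $v^i\omega^j_i$ exactly cancels this anomaly. The conceptual device above, identifying $\boldsymbol{\varphi}^{\,n+i}$ with the frame components of $\nabla\iota$, makes the cancellation automatic and reduces everything to the already-established global nature of $\nabla$ and $\iota$; a direct check through the transformation law of the $\Gamma^i_{\;jk}$ is possible but computationally heavier, and I would avoid it.
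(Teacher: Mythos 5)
Your proof is correct, and it takes a somewhat different route from the paper's. For linear independence you give an explicit block-triangular argument (the $\varphi^i$ are horizontal, and each $\boldsymbol{\varphi}^{\,n+i}$ is the unique member of the family containing $dv^i$, with unit coefficient), whereas the paper simply declares this obvious; your version is the more complete one. For global well-definedness the paper computes directly: under a frame change $\tilde e_i = A_i^{\,j}e_j$ it invokes the transformation law $\tilde\omega^i_{j} = B^{\,i}_{\,k}A^{\,l}_{\,j}\omega^k_{l} + B^{\,i}_{\,k}dA^{\,k}_{\,j}$ and checks, via $dB^{\,i}_{\,j} = -B^{\,i}_{\,l}\,dA^{\,l}_{\,q}\,B^{\,q}_{\,j}$, that the inhomogeneous terms cancel and $\tilde{\boldsymbol{\varphi}}^{\,n+i} = B^{\,i}_{\,j}\boldsymbol{\varphi}^{\,n+j}$. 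You instead identify $\boldsymbol{\varphi}^{\,n+i}$ as the frame components of $\nabla\iota$, the covariant derivative of the canonical section, which makes the tensorial transformation automatic and explains conceptually why the cancellation occurs; this is cleaner than the index computation. Two points deserve attention, though. First, your argument presupposes that the frame-by-frame connection forms of Theorem \ref{Chern Rund Theorem} glue into a single globally defined connection $\nabla$ on $\pi^\ast T^{1,0}M$; that gluing statement is exactly equivalent to the transformation law the paper invokes, and it is not literally contained in Theorem \ref{Chern Rund Theorem}, whose existence and uniqueness are stated per frame. It does follow from the uniqueness clause (transport one frame's forms to another frame by the connection transformation law and verify that conditions (1) and (2) are preserved), but you should say this explicitly, since otherwise your appeal to ``the globally defined $\nabla$'' assumes a fact of the same nature as the one being proved. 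Second, the corollary allows arbitrary smooth frames $\{e_i\}$, so you should run the tensoriality argument for arbitrary frame changes $\tilde e_i = A_i^{\,j}e_j$ (which your $\nabla\iota$ identification handles verbatim, since $\nabla\iota = \boldsymbol{\varphi}^{\,n+i}\otimes e_i$ in any frame) rather than only for holomorphic coordinate-induced frames with Jacobian transition functions, as written in your last step. With these two clarifications, your proof is complete and arguably tidier than the paper's.
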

\begin{proof}
	Linear independence is obvious. To see that the coframe is globally defined, let's consider a change of local frame $\tilde e_i = A_i^{\,j} e_j$ for $\pi^*T^{1,0}M$.
	Then $\tilde v^{\,i} = B^{\,i}_{\,j} v^{j}$ and $\tilde\varphi^{\,i} = B^{\,i}_{\,j} \varphi^{\,j}$. Since the connection forms transform as $\tilde\omega^i_{j} = B^{\,i}_{\,k}A^{\,l}_{\,j}\,\omega^k_{l} + B^{\,i}_{\,k}dA^{\,k}_{\,j}$, it follows that
	\[
\begin{aligned}
\tilde{\boldsymbol{\varphi}}^{\,n+i}
&= d\tilde v^{\,i} + \tilde v^{\,j}\tilde\omega^i_{j} = (dB^{\,i}_{\,j}) v^{\,j}
+ B^{\,j}_{\,k} v^{\,k}
(B^{\,i}_{\,l}A^{\,q}_{\,j}\,\omega^l_{q} + B^{\,i}_{\,l}dA^{\,l}_{\,j}).
\end{aligned}
\]
Using $dB^{\,i}_{\,j} = -B^{\,i}_{\,l}\,dA^{\,l}_{\,q}\,B^{\,q}_{\,j}$, it follows that
\[
\tilde{\boldsymbol{\varphi}}^{\,n+i} = B^{\,i}_{\,j}\bigl(dv^{\,j}+v^{\,q}\omega^j_{q}\bigr)
= B^{\,i}_{\,j}\,\boldsymbol{\varphi}^{\,n+j}.
\]
\end{proof}
%\begin{definition}\cite{Aikou-b}
%	A complex Finsler metric $F$ on a complex manifold $M$ is a called a \textbf{complex Berwald metric}  if in any local coordinate system $(z^i,v^i)$ in $\tilde{M}$,
%	the Christoffel symbols $\Gamma^i_{\;jk}=\Gamma^i_{\;jk}(z)$ given by \eqref{wij} are functions of $z\in M$ only.
%\end{definition}
\begin{definition}\label{Berwald Proposition}
Let $F:T^{1,0}M\rightarrow [0,+\infty)$ be a complex Finsler metric on a complex manifold $M$,  $\{e_i\}$  an arbitrary smooth local frame for $\pi^\ast T^{1,0}M$ with its dual frame $\{\varphi^i\}$ for $\pi^\ast T^{\ast 1,0}M$. Let $\Gamma_{\;jk}^i$ be  given by \eqref{CRTheorem equation 6}, which are  the horizontal complex Chern-Rund connection coefficients of $F$ with respect to $\{e_i\}$ and  $\{\varphi^i\}$. We call $F$ a complex Berwald metric if
	$$\frac{\partial}{\partial v^l}\Gamma^i_{\;jk}=\frac{\partial}{\partial \bar v^l}\Gamma^i_{\;jk}=0\quad \forall i,j,k,l=1,\cdots,n.$$
\end{definition}

\begin{proposition}
Definition \ref{Berwald Proposition} is independent of the choice of the smooth local frame $\{e_i\}$ for $\pi^\ast T^{1,0}M$ and its dual frame $\{\varphi^i\}$ for $\pi^\ast T^{\ast 1,0}M$. In particular, if $\{e_i\}$ and $\{\varphi^i\}$ are holomorphic local frames for $\pi^\ast T^{1,0}M$ and $\pi^\ast T^{\ast 1,0}M$, respectively, then $\Gamma_{\;jk}^i$ coincide with the horizontal Chern-Finsler connection coefficients \eqref{ccc}.
\end{proposition}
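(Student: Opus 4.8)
The plan is to recast Definition \ref{Berwald Proposition} intrinsically and then track how the horizontal coefficients $\Gamma^i_{\;jk}$ behave under a change of frame. The condition $\partial\Gamma^i_{\;jk}/\partial v^l = \partial\Gamma^i_{\;jk}/\partial\bar v^l = 0$ simply says that the functions $\Gamma^i_{\;jk}$ are constant along the fibers of $\pi:\tilde M\to M$, a frame-free phrasing independent of which linear fiber coordinates are used. The decisive structural input is \eqref{aaa}: for any frame the coframe matrix $a^i_j$ in $\varphi^i = a^i_j\,dz^j$ is independent of $v$ and $\bar v$. Hence for two frames related by $\tilde e_i = A^{\,j}_i e_j$, with coframes $\varphi^i = a^i_j\,dz^j$ and $\tilde\varphi^i = \tilde a^i_j\,dz^j$, the coframe transition $B$ in $\tilde\varphi^i = B^i_{\,j}\varphi^j$ satisfies $\tilde a^i_{\,k} = B^i_{\,j}a^j_{\,k}$, so $B$ (a product of two fiber-constant matrices) and its inverse $A$ are themselves fiber-constant, i.e. functions of the base coordinates $(z,\bar z)$ alone.

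First I would record the transformation law for $\Gamma^i_{\;jk}$. Recall from \eqref{wij} that $\Gamma^i_{\;jk}$ is the $\varphi^k$-coefficient of the $(1,0)$-part of $\omega^i_j$. Starting from the change-of-frame formula $\tilde\omega^i_j = B^i_{\,k}A^{\,l}_{\,j}\omega^k_l + B^i_{\,k}\,dA^{\,k}_{\,j}$ of the preceding corollary, and using that $A$ is fiber-constant so that $dA^{\,k}_{\,j}$ has no $dv$- or $d\bar v$-component, I take $(1,0)$-parts and rewrite $dz^m = b^m_{\,n}\varphi^n$ to obtain
\begin{equation*}
\tilde\Gamma^i_{\;jq} = B^i_{\,k}A^{\,l}_{\,j}A^{\,n}_{\,q}\,\Gamma^k_{\;ln} + B^i_{\,k}\,\frac{\partial A^{\,k}_{\,j}}{\partial z^m}\,b^m_{\,n}A^{\,n}_{\,q}.
\end{equation*}
Every factor appearing besides $\Gamma^k_{\;ln}$ --- namely $A$, $B$, $\partial A/\partial z$ and $b = a^{-1}$ --- is fiber-constant, so the second term is fiber-constant and the first is $\Gamma^k_{\;ln}$ multiplied by fiber-constant matrices. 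Consequently $\Gamma^k_{\;ln}$ is constant along the fibers if and only if $\tilde\Gamma^i_{\;jq}$ is, which is precisely the frame-independence of Definition \ref{Berwald Proposition}. If one prefers to differentiate in the frame-adapted fiber coordinates, the same conclusion follows after the fiber-constant linear change $\frac{\partial}{\partial\tilde v^r} = A^{\,s}_{\,r}\frac{\partial}{\partial v^s}$.

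Finally, for the holomorphic case I would appeal to Remark \ref{holomorphicframe}: when $\{e_i\}$ and $\{\varphi^i\}$ are holomorphic, $a^i_j$ is holomorphic in $z$, so $\partial a^i_j/\partial\bar z^k = 0$ and hence $\partial\bar a^l_m/\partial z^k = 0$; by \eqref{BBB} this gives $\Gamma^i_{\;j\bar l} = 0$, and in \eqref{CRTheorem equation 6} every term carrying a factor $\partial\bar a^l_m/\partial z^k$ drops out, leaving exactly the Chern-Finsler coefficients \eqref{ccc}. I expect the main obstacle to be the verification that the transition matrices are fiber-constant: this is the hinge on which everything turns, since it is exactly what makes the inhomogeneous term in the transformation law harmless. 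A genuinely fiber-dependent frame change would introduce $dv$-components into $dA^{\,k}_{\,j}$, spoiling both the horizontality of $\tilde\omega^i_j$ and the frame-independence of the Berwald condition; so establishing this fiber-constancy from \eqref{aaa} (equivalently, from the frames being pulled back from the base) is the crux.
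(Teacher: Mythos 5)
Your proposal is correct and follows essentially the same route as the paper's own proof: both hinge on \eqref{aaa} to conclude that the frame-transition matrices $A$ and $B$ are fiber-constant (being products of the base-dependent matrices $a$, $b$, $\tilde a$, $\tilde b$ of the two frames), both derive the same transformation law for the horizontal coefficients (the paper's \eqref{tt} is your displayed formula up to index relabeling), and both handle the holomorphic case by invoking Remark \ref{holomorphicframe} together with \eqref{BBB} and \eqref{CRTheorem equation 6}. What you single out as the crux --- fiber-constancy of the transition matrices, which requires the second frame to also satisfy the first structure equation --- is exactly the point the paper makes at the start of its proof, so there is no gap.
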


\begin{proof}
	Let $\{e_i\}$ be an arbitrary local frame for $\pi^*T^{1,0}M$ with dual local frame $\{\varphi^i\}$. By \eqref{aaa},  there exists non-degenerated $n\times n$ matrix $(b_i^j)$ and $(a_j^i)$ locally defined on $M$ such that
	\[
	e_i = b_i^j(z)\partial_j,\qquad
	\varphi^i = a^i_j(z)\,dz^j,
	\]
	where $(a^i_j)$ is the inverse matrix of  $(b_i^{\,j})$.

	Let $\{\tilde{e}_i\}$ be another local frame for $\pi^\ast T^{1,0}M$ with its  dual $\{\tilde{\varphi}^i\}$ such that the first structure equation of the complex Chern-Rund connection is satisfied. Then
there exists non-degenerated $n\times n$ matrix $(\tilde{b}_i^j)$ and $(\tilde{a}_j^i)$ locally defined on $M$ such that
	\[
	\tilde{e}_i = \tilde{b}_i^j(z)\partial_j,\qquad
	\tilde{\varphi}^i = \tilde{a}^i_j(z)\,dz^j,
	\]
	where $(\tilde{a}^i_j)$ is the inverse matrix of  $(\tilde{b}_i^{\,j})$.

Thus there exists a non-degenerated matrices $A=(A_i^j)$ and $B=(B_i^j)$ locally defined on $M$ such that
$\tilde e_i = A_i^{\,j}e_j$ and $\tilde{\varphi}^i=B_j^i\varphi^j$. Then it is easy to check that $A_i^j=a_k^j\tilde{b}_i^k$ and $B_j^i=b_j^l\tilde{a}_l^i$.

Let $v=v^ie_i=\tilde{v}^i\tilde{e}_i=\tilde{v}^iA_i^je_j$,  then $\tilde v^i = B^{\,i}_{\,j}\,v^j$, hence
		$$
	\frac{\partial}{\partial \tilde v^l}
		= A^{\,j}_{\,l}\,\frac{\partial}{\partial v^j}.
	$$

Denote the connection  $1$-forms of the complex Chern-Rund connection with respect to $\{\varphi^i\}$ and $\{\tilde{\varphi}^i\}$ by
	$$\omega^i_{j} = \Gamma^i_{\,jk}\,\varphi^k
	               + \Gamma^i_{\,j\bar k}\,\bar\varphi^k
	\quad\mbox{and}\quad
	\tilde\omega^i_{j} = \tilde\Gamma^i_{\,jk}\,\tilde\varphi^k + \tilde\Gamma^i_{\,j\bar k}\,\bar{\tilde\varphi}^k,
	$$
respectively. It follows that the connection $1$-forms  obey the following transformation law:
	
	$$\tilde\omega^i_{j}
	= B^{\,i}_{\,k}\,A^{\,l}_{\,j}\,\omega^k_{l}
	+ B^{\,i}_{\,k}\,dA^{\,k}_{\,j}.
	$$
	Substituting the expressions for $\omega^i_{j}$ and $\tilde\omega^i_{j}$  into above equation and using
	$\tilde\varphi^i = B^{\,i}_{\,k}\varphi^k$, we compare coefficients of $\varphi^k$ to obtain
	\begin{equation}
\tilde\Gamma^i_{\,jk}
	=A_k^s( B^{\,i}_{\,t}\,A^{\,l}_{\,j}\,\Gamma^t_{\,ls}
	+ B^{\,i}_{\,t}\,b_s^{l}\,\partial_l A^{\,t}_{\,j}).\label{tt}
\end{equation}
	Since $A_j^l$, $B_k^i$ and $b_s^{l}$ are independent of $v$ and $\bar v$, the right-hand side of \eqref{tt} is independent of $v$ and $\bar v$
	iff $\Gamma^k_{\,ls}$ are independent of $v$ and $\bar v$. Consequently,
$$
\frac{\partial}{\partial v^t}\Gamma_{\,ls}^k=\frac{\partial}{\partial \bar{v}^t}\Gamma_{\,ls}^k=0\Leftrightarrow \frac{\partial}{\partial v^t}\tilde\Gamma^i_{\,jk}=\frac{\partial}{\partial \bar{v}^t}\tilde\Gamma^i_{\,jk}=0.
$$
Thus Definition \ref{Berwald Proposition} is independent of the choice of local smooth frames $\{e_i\}$ for $T^{1,0}M$ and its dual $\{\varphi^i\}$ for $\pi^\ast T^{\ast 1,0}M$.

If in particular $\{e_i\}$ and $\{\varphi^i\}$ are holomorphic local frames, then by Remark \ref{holomorphicframe} $\Gamma_{\;jk}^i$ reduce to \eqref{ccc}.
\end{proof}
\begin{remark}
It follows from \eqref{tt} that $\Gamma_{\,ls}^t=\Gamma_{\,sl}^t\not\Rightarrow \tilde{\Gamma}_{\,jk}^i=\tilde{\Gamma}_{\,kj}^i$.
\end{remark}

The curvature 2-forms $\Omega^i_j$ of the complex Chern-Rund connection on $\pi^* T^{1,0}M$ are defined by the Cartan structure equation
\begin{equation}\label{curvature definition}
	\Omega^i_j := d\omega^i_j - \omega^k_j \wedge \omega^i_k.
\end{equation}
Differentiating (\ref{Rund_sturcture_equation}),  we get
\begin{equation}
	\begin{split}
		0=d(d\varphi^i)&=d\varphi^j\wedge \omega^i_j-\varphi^j\wedge d\omega^i_j+d\tau^i\\
				&=(\varphi^k\wedge \omega^j_k+\tau^j ) \wedge \omega^i_j -\varphi^j\wedge(\Omega^i_j+\omega^k_j\wedge \omega^i_k)+d\tau^i\\
				&=-\varphi^j\wedge \Omega^i_j +\tau^j\wedge \omega^i_j+d\tau^i.
	\end{split}
\end{equation}
Again from (\ref{Rund_sturcture_equation}),  if we write $\tau^i=T^i_{\ jk}\varphi^j\wedge \varphi^k$, then
\begin{equation}\label{First Bianchi identity}
	\begin{split}
		0&=-\varphi^j\wedge \Omega^i_j +\tau^j\wedge \omega^i_j+d\tau^i\\
			&=-\varphi^j\wedge \Omega^i_j +T^j_{\ kl}\varphi^k\wedge \varphi^l\wedge \omega^i_j+d(T^i_{\ jk}\varphi^j\wedge \varphi^k)\\
			&=-\varphi^j\wedge \Omega^i_j +T^j_{\ kl}\varphi^k\wedge \varphi^l\wedge \omega^i_j+dT^i_{\ jk}\wedge\varphi^j\wedge \varphi^k,
	\end{split}
\end{equation}
where we use the fact that
\begin{equation*}
	T^i_{\ jk}d(\varphi^j\wedge \varphi^k)=T^i_{\ jk}d\varphi^j\wedge \varphi^k-T^i_{\ jk}\varphi^j\wedge d\varphi^k=0.
\end{equation*}
Because $\Omega^i_j$ are $2$-forms on the manifold $\tilde{M}$, they can be expanded in terms of the wedge products of the coframe elements
$\{\varphi^j,\ \bar\varphi^j,\ \boldsymbol{\varphi}^{n+j},\ \boldsymbol{\bar\varphi}^{n+j}\}$.
More precisely, we write
\[
\begin{aligned}
\Omega^i_j =\;
& {}^1R{}^i_{\;jkl}\; \varphi^k\wedge\varphi^l
+ {}^2R{}^i_{\;jk\bar l}\; \varphi^k\wedge\bar\varphi^l
+ {}^3R{}^i_{\;j\bar k\bar l}\; \bar\varphi^k\wedge\bar\varphi^l
+ {}^4R{}^i_{\;jkl}\; \varphi^k\wedge\boldsymbol{\varphi}^{n+l}\\
&+ {}^5R{}^i_{\;jk\bar l}\; \varphi^k\wedge\boldsymbol{\bar\varphi}^{n+l}
+ {}^6R{}^i_{\;j\bar k\bar l}\; \bar\varphi^k\wedge\boldsymbol{\bar\varphi}^{n+l}
+ {}^7R{}^i_{\;jk\bar l}\; \boldsymbol{\varphi}^{n+k}\wedge\bar\varphi^{\,l} \\
&+ {}^8R{}^i_{\;jkl}\; \boldsymbol{\varphi}^{n+k}\wedge\boldsymbol{\varphi}^{n+l}
+ {}^9R{}^i_{\;jk\bar l}\; \boldsymbol{\varphi}^{n+k}\wedge\boldsymbol{\bar\varphi}^{n+l}
+ {}^{10}R{}^i_{\;j\bar k\bar l}\; \boldsymbol{\bar\varphi}^{n+k}\wedge\boldsymbol{\bar\varphi}^{n+l},
\end{aligned}
\]
where
\begin{equation}\label{index k and l}
	\begin{split}
		&{}^1R^i_{\ jkl}=-{}^1R^i_{\ jlk}, \quad {}^3R^i_{\ j\bar k\bar l}=-{}^3R^i_{\ j\bar l\bar k},\\
	&{}^8R^i_{\ jkl}=-{}^8R^i_{\ jlk}, \quad {}^{10}R^i_{\ j\bar k\bar l}=-{}^{10}R^i_{\ j\bar l\bar k}.
	\end{split}
\end{equation}
Then \eqref{index k and l} and \eqref{First Bianchi identity} implies
\begin{equation}\label{R6+R8}
	\begin{split}
	&{}^6R^i_{\ j\bar k\bar l}\varphi^j\wedge\bar\varphi^k\wedge\boldsymbol{\varphi}^{n+l}=0,\qquad
	{}^7R^i_{\ jk\bar l}\varphi^j\wedge \bar \varphi ^k\wedge\boldsymbol{\bar \varphi}^{n+l}=0, \qquad \\
	&{}^9R^i_{\ jk\bar l}\varphi^j\wedge\boldsymbol{\varphi}^{n+k}\wedge\boldsymbol{\bar \varphi}^{n+l}=0,\\
	\end{split}
\end{equation}
\begin{equation}\label{R3+R7}
	\begin{split}
	 &{}^3R^i_{\ j\bar k\bar l}\varphi^j\wedge\bar\varphi^k\wedge\bar\varphi^l=0, \qquad {}^8R^i_{\ jkl}\varphi^j\wedge\boldsymbol{\varphi}^{n+k}\wedge\boldsymbol{\varphi}^{n+l}=0, \qquad\\
	&{}^{10}R^i_{\ j\bar k\bar l}\varphi^j\wedge\boldsymbol{\bar \varphi}^{n+k}\wedge\boldsymbol{\bar \varphi}^{n+l}=0.
	\end{split}
\end{equation}
It follows from (\ref{R6+R8}) that
$${}^6R^i_{\ j\bar k\bar l}=0,\quad {}^7R^i_{\ jk\bar l}=0,\quad{}^9R^i_{\ jk\bar l}=0.$$
We rewrite ${}^{10}R^i_{\ j\bar k\bar l}\varphi^j\wedge\boldsymbol{\bar \varphi}^{n+k}\wedge\boldsymbol{\bar \varphi}^{n+l}=0$ as follows:
\begin{equation}
	\sum\limits_{k<l}({}^{10}R^i_{\ j\bar k\bar l}-{}^{10}R^i_{\ j\bar l\bar k})\varphi^j\wedge\boldsymbol{\bar \varphi}^{n+k}\wedge\boldsymbol{\bar \varphi}^{n+l}=0,
\end{equation}
which together with (\ref{index k and l}) implies  ${}^{10}R^i_{\ j\bar k\bar l}=0$.
Similarly,  we obtain ${}^3R^i_{\ j\bar k\bar l}=0$ and ${}^8R^i_{\ jkl}=0$.
Thus,  the expression of $\Omega^i_j$ is reduced to
\begin{equation}
	\Omega^i_j={}^1R^i_{\ jkl}\varphi^k\wedge\varphi^l+{}^2R^i_{\ jk\bar l}\varphi^k\wedge\bar\varphi^l+{}^4R^i_{\ jkl}\varphi^k\wedge\boldsymbol{\varphi}^{n+l}
	+{}^5R^i_{\ jk\bar l}\varphi^k\wedge\boldsymbol{\bar \varphi}^{n+l}
\end{equation}
where  ${}^1R^i_{\ jkl}$ satisfy
\begin{align}
	&-{}^1R^i_{\ jkl}\varphi^j \wedge \varphi^k\wedge\varphi^l+T^s_{\  kl}\Gamma^i_{\;sj}\varphi^k\wedge\varphi^l\wedge \varphi^j
	+e_l(T^i_{\ jk})\varphi^l\wedge\varphi^j\wedge\varphi^k\nonumber\\
	&-\frac{\partial}{\partial   v^s}(T^i_{\ jk})  N^s_{\;l}\varphi^l\wedge\varphi^j\wedge\varphi^k
	-\frac{\partial}{\partial   \bar{v}^t}(T^i_{\ jk})  \bar{v}^s\Gamma^{\bar{t}}_{\;\bar{s}l}\varphi^l\wedge\varphi^j\wedge\varphi^k=0.
\end{align}

\begin{remark}
	When $F$ comes from a Hermitian metric,  the complex Cartan tensors $\mathcal{C}_v^+$ and $\mathcal{C}_v^-$ are equal to zero. The condition (\ref{Rund metric compatibiity}) reduces to
	\begin{equation}
		dg_{i\bar{\jmath}}=g_{l\bar{\jmath}}\omega^l_i+g_{i\bar l}\omega^{\bar l}_{\bar{\jmath}}.\label{dij}
	\end{equation}
	Differentiating \eqref{dij},  we get the the compatibility of curvature
	\begin{equation}\label{compatibility of curvature}
		g_{k\bar{\jmath}}\Omega^k_i+g_{i\bar k}\Omega^{\bar k}_{\bar{\jmath}}=0.
	\end{equation}
		When the frame $\{e_i\}$ is an unitary frame,  both $\omega=(\omega^i_j)$ and $\Omega=(\Omega^i_j)$ are skew-Hermitian. According to  Remark \ref{holomorphicframe},  when the frame $\{e_i\}$
		is also holomorphic,  then $\omega$ is of type $(1, 0)$ and $\tau$ is of type $(2, 0)$. So by the definition of curvature (\ref{curvature definition}),  $\Omega$ has no component of type $(0, 2)$.
		Moreover, the skew-Hermitian property of  $\Omega$ implies that $\Omega$ has no component of type of $(2, 0)$. Thus $\Omega$ is of type $(1, 1)$.
	\end{remark}

Next we derive the holomorphic sectional and bisection curvature formula of a complex Finsler manifold $(M,F)$.

Let \(v \in T_{\pi(v)}^{1,0}M\) be a non-zero vector and \(w \in T_{\pi(v)}^{1,0}M\) be a non-zero  vector.
Denote \(v = v^i \varepsilon_i\vert_{\pi(v)}\) and \(w = w^i \varepsilon_i\vert_{\pi(v)}\). The natural lifts of $v$ and $w$ to $\pi^\ast T^{1,0}M$ at \(v\) are denoted by
\[
v^* := \pi^*(v)\vert_{ v} = v^i e_i\vert_{v}, \qquad
w^* := \pi^*(w)\vert_{ v} = w^i e_i\vert_{ v}.
\]
Let \(X, Y, W,Z\) be local sections of  \(\pi^* T^{1,0}M\).
In terms of the frame \(\{e_i\}\) and its dual coframe \(\{\varphi^i\}\), the curvature operator and the corresponding curvature tensor are defined, respectively, by
\begin{equation}
	R(X, \overline{Y})W := W^l X^k \overline{Y}^j \cdot {}^2R^i_{\ l  \bar{\jmath} k} e_i,
\end{equation}
\begin{equation}
	R(W, \overline{Z}, X, \overline{Y}) := \mathcal{G}(R(X, \overline{Y})W, \overline{Z}),
\end{equation}
where ${}^2R^i_{\ l \bar{\jmath} k}$ are the coefficients of the curvature 2-forms $\Omega^l_i$ with respect to the coframe $\{\varphi^j, \boldsymbol{\varphi}^{n+j}\}$ defined previously.
\begin{definition}
Let $(M, F)$ be a complex Finsler manifold. Consider a point $\pi(v) \in M$ and two non-zero directions $v, w \in T^{1,0}_{\pi(v)} M \setminus \{0\}$.
The \textbf{holomorphic bisectional curvature} at the point $v \in \tilde{M}$ in the directions of $v^\ast$ and $w^\ast$ is defined as
\begin{equation}
B(v^\ast, w^\ast) := \frac{R\bigl(v^*, \bar{v}^*, w^*, \bar{w}^*\bigr)}
{\mathcal{G}(v^*, v^*) \mathcal{G}(w^*, w^*)}.
\end{equation}

The \textbf{holomorphic sectional curvature} at $v\in\tilde{M}$ in the direction of $v^\ast$ is defined as the bisectional curvature when $w^\ast = v^\ast$, namely:
\begin{equation}\label{holomorphic curvature}
	K(v) := B(v^\ast, v^\ast) = \frac{R\bigl(v^*, \bar{v}^*, v^*, \bar{v}^*\bigr)}
{F^4(v)}.
\end{equation}
\end{definition}

Let's define
\begin{equation}
\Omega^i:=d(\boldsymbol{\varphi}^{n+i})-\boldsymbol{\varphi}^{n+j}\wedge \omega^i_j, \quad \text{where} \quad \boldsymbol{\varphi}^{n+i}=d  v^i+  v^k\omega^i_k.
\end{equation}
Then a direct computation yields
\begin{align}
\Omega^i&=d(\boldsymbol{\varphi}^{n+i})-\boldsymbol{\varphi}^{n+j}\wedge \omega^i_j \nonumber      \\ \nonumber
		&=d(d  v^i+  v^k\omega^i_k)-\boldsymbol{\varphi}^{n+j}\wedge \omega^i_j\\ \nonumber
		&=d  v^j\wedge \omega^i_j+  v^kd\omega^i_k-\boldsymbol{\varphi}^{n+j}\wedge \omega^i_j\\ \nonumber
		&=(\boldsymbol{\varphi}^{n+j}-  v^k\omega^j_k)\wedge \omega^i_j+  v^k d\omega^i_k-\boldsymbol{\varphi}^{n+j}\wedge \omega^i_j\\ \nonumber
		&=  v^k (d\omega^i_k-\omega^j_k\wedge \omega^i_j)\\
		&=  v^k\Omega^i_k \label{Omega^i_j_eta^j}.
\end{align}
Similarly, for the conjugation $\bar{\Omega}^i$, we  have \(\bar\Omega^i=\bar  v^k\Omega^{\bar i}_{\bar k}\).
Therefore, we can  write
\begin{equation}\label{Omega^i__expression}
\Omega^i=S^i_{\ kl}\varphi^k\wedge\varphi^l+R^i_{\ k\bar l}\varphi^k\wedge\bar\varphi^l+P^i_{\ kl}\varphi^k\wedge\boldsymbol{\varphi}^{n+l}
+Q^i_{\ k\bar l}\varphi^k\wedge\boldsymbol{\bar \varphi}^{n+l}
\end{equation}
where
\begin{align}
&S^i_{\ kl}:={}^1R^i_{\ jkl}  v^j, \quad R^i_{\ k\bar l}:={}^2R^i_{\ jk\bar l}  v^j, \\
&P^i_{\ kl}:={}^4R^i_{\ jkl}  v^j, \quad Q^i_{\ k\bar l}:={}^5R^i_{\ jk\bar l}  v^j.
\end{align}

Note that,  the curvature quantities can also be computed via the curvature forms \(\Omega^i\) of the complex Chern-Rund connection on the pull-back bundle  $\pi^\ast T^{1,0}M$.
 This relation allows us to express the holomorphic bisectional and sectional curvatures in terms of the components of \(\Omega^i\).

%Based on the fact that the type of torsion has nothing to do with the frame and remark \ref{holomorphicframe},  we have the following definition.
\begin{definition}\cite{AbateAndPatrizio}\label{K?hler And weakly K?hler}
	A complex Finsler metric $F$ on $M$ is called a K\"ahler-Finsler metric if
	\begin{equation}\label{T^i_jk}
		T^j_{\, ik}=0;
	\end{equation}
% called K\"ahler-Finsler metric if
% \begin{equation}\label{T^i_jkv^j}
%	T^j_{\ ik}  v^i=0;
% \end{equation}
  called a weakly K\"ahler-Finser metric	if
 \begin{equation}\label{g_T^i_jkv^j}
	g_{j\bar l}T^j_{\, ik}  v^i\bar  v^l=0.
 \end{equation}
\end{definition}
Whenever $T_{\, ik}^j=\Gamma_{\;ik}^j-\Gamma_{\;ki}^j$ with $\Gamma_{\;ik}^j$ given by \eqref{ccc}, Chen and Shen in \cite{ChenAndShen} proved that $T^j_{\; ik}  v^i=0$ actually implies $T_{\;ik}^j=0$.
If $F$ comes from a Hermitian metric,   then the definition of both K\"ahler-Finsler metric and weakly K\"ahler-Finsler metric reduces to the usual  definition of K\"ahler metric in Hermitian geometry.

\section{Left-invariant metrics on Lie groups}\label{LieGroup}

Let $G$ be a real Lie group with Lie algebra $\mathfrak{g}$. Suppose that $G$ is also a complex manifold with a left-invariant complex structure $J$. In this section, we investigate left-invariant complex Finsler metrics on $G$.
We derive the complex Chern-Rund connection and its  curvature of left-invariant complex Finsler metrics on $G$. In particular, if $G$ is a complex Lie group, we obtain some rigidity results of left-invariant complex Finsler metrics on $G$.

\subsection{Lie groups which are also complex manifolds}
In the following, we identify $\mathfrak{g}$ with the space of left-invariant vector fields on $G$ via
 \[
\mathcal{P} (X)_g := \frac{d}{dt}\Big\vert_{t=0} g \exp(tX), \quad X \in \mathfrak{g}, \; g \in G.
 \]
 In particular, at the identity we have $\mathcal{P}(X)(e) = \frac{d}{dt}\big\vert_{t=0} \exp(tX)$.
 Under this identification, the Lie bracket on $\mathfrak{g}$ coincides with the Lie bracket of vector fields on $G$, i.e.,
 \[
 \mathcal{P}([X, Y]) = [\mathcal{P}(X), \mathcal{P}(Y)], \quad \forall X, Y \in \mathfrak{g}.
 \]

Suppose $ J $ is a left-invariant almost complex structure on $ G $. By left-invariance, $ J $ corresponds uniquely to a real linear
endomorphism $ I: \mathfrak{g} \to \mathfrak{g} $ such that
\begin{equation}\label{J_eI}
J_e \circ \mathcal{P} = \mathcal{P} \circ I,	
\end{equation}
and $ I^2 = -\mathrm{Id}_\mathfrak{g} $ which is called Koszul operator \cite{koszul1959exposes}.

The integrability condition for $ J $ to be a complex structure on $G$ is required by the vanishing of the Nijenhuis tensor:
$$
N_J(X,Y) = [X,Y] - [JX,JY] + J[JX,Y] + J[X,JY] = 0, \quad \forall X,Y \in \mathfrak{X}(G).
$$
By left-invariance, this above condition is equivalent to the following algebraic condition on $ I $ \cite{FoundationOfDG2}:
$$
N_I(X,Y) := [X,Y] - [IX,IY] + I[IX,Y] + I[X,IY] = 0, \quad \forall X,Y \in \mathfrak{g}.
$$

For a given endomorphism $ I: \mathfrak{g} \to \mathfrak{g} $ with $ I^2 = -\mathrm{Id}_\mathfrak{g} $, we may complexify $ \mathfrak{g} $ to obtain
$$
\mathfrak{g}^{\mathbb{C}} := \mathfrak{g} \otimes_{\mathbb{R}} \mathbb{C} = \mathfrak{g}^{1,0} \oplus \mathfrak{g}^{0,1},
$$
where
$$
\mathfrak{g}^{1,0} = \{ X - \sqrt{-1}IX \mid X \in \mathfrak{g} \}, \quad
\mathfrak{g}^{0,1} = \{ X + \sqrt{-1}IX \mid X \in \mathfrak{g} \}
$$
are the $ +\sqrt{-1} $ and $ -\sqrt{-1} $ eigenspaces of the $ \mathbb{C} $-linear extension of $ I $ to $ \mathfrak{g}^{\mathbb{C}} $, respectively.

In the complexified Lie algebra \(\mathfrak{g}^\mathbb{C}\), we define the conjugation map $\rho $
%\[
%\sigma_{\mathfrak{g}} : \mathfrak{g}^\mathbb{C} \to \mathfrak{g}^\mathbb{C}, \quad \sigma_{\mathfrak{g}}(X \otimes z) = X \otimes \bar{z} \quad \text{for } X \in \mathfrak{g},\ z \in \mathbb{C}.
%\]
%This map satisfies:
%1. **Anti-linearity**: \(\sigma(\lambda v) = \bar{\lambda} \, \sigma(v)\) for all \(\lambda \in \mathbb{C},\ v \in \mathfrak{g}^\mathbb{C}\).
%2. **Involution**: \(\sigma^2 = \mathrm{id}_{\mathfrak{g}^\mathbb{C}}\).
%3. **Lie algebra automorphism**: \(\sigma\big([v,w]\big) = [\sigma(v), \sigma(w)]\) for all \(v,w \in \mathfrak{g}^\mathbb{C}\).
\[
\rho(X - \sqrt{-1} I X) = X + \sqrt{-1} I X, \quad \rho(X + \sqrt{-1} I X) = X - \sqrt{-1} I X,\  \forall X \in \mathfrak{g}.
\]
Thus \(\rho(\mathfrak{g}^{1,0}) = \mathfrak{g}^{0,1}\) and \(\rho(\mathfrak{g}^{0,1}) = \mathfrak{g}^{1,0}\).

According to (\ref{J_eI}),  for any $X\in \mathfrak{g}$, we have
\begin{align*}
	&\mathcal{P}(X-\sqrt{-1}IX)=\mathcal{P}(X)-\sqrt{-1}J_e\mathcal{P}(X),\\
	&\mathcal{P}(\rho(X-\sqrt{-1}IX))=\mathcal{P}(X)+\sqrt{-1}J_e\mathcal{P}(X).
\end{align*}
By the Definition \ref{conjugation map} of conjugation map $\sigma_{TG}$, we have
\begin{equation}
	\mathcal{P}\circ \rho=\sigma_{TG}\circ\mathcal{P}.
\end{equation}
Also, we denote $\rho(X)$ as $\overline{X}\in \mathfrak{g}^{0,1}$ for $X\in \mathfrak{g}^{1,0}$. Thus we have
\begin{equation*}
	\mathcal{P}(\overline{X})=\overline{\mathcal{P}(X)},\quad X\in\mathfrak{g}^{\mathbb{C}}.
\end{equation*}

The integrability condition $ N_I = 0 $ is equivalent to the requirement that $ \mathfrak{g}^{1,0} $ forms a complex Lie subalgebra of $ \mathfrak{g}^{\mathbb{C}} $:
$$
[\mathfrak{g}^{1,0}, \mathfrak{g}^{1,0}] \subseteq \mathfrak{g}^{1,0}.
$$

Under this condition, $ J $ reduces to a left-invariant complex structure on $ G $, so that $ G $ is a complex manifold whose holomorphic
tangent bundle at the identity $T_e^{1,0}G$ can be naturally identified with $ \mathfrak{g}^{1,0} $.

Let $ \{ E_1, E_2, \dots, E_n \} $ be a basis of $ \mathfrak{g}^{1,0} \subset \mathfrak{g}^{\mathbb{C}} $.
Denote by $ e_i(e) := \mathcal{P}(E_i) \in T^{1,0}_eG $ the corresponding tangent vectors and $ \bar{e}_i(e) := \mathcal{P}(\overline{E}_i) \in T^{0,1}_eG$.
By left translation, we extend them to left-invariant vector fields on $ G $, still denoted by $ e_i $, defined as
$$
e_i(g) = (L_g)_{*e} \big( e_i(e) \big), \quad g \in G.
$$
Thus $ \{ e_1, \dots, e_n \} $ constitutes a global frame of the holomorphic tangent bundle $ T^{1,0}G $ when $ J $ is integrable.
Let $ \{ \varphi^1, \dots, \varphi^n \} $ be the dual left-invariant (1,0)-forms, i.e., $ \varphi^i(e_j) = \delta^i_j $ point-wise on $ G $.

Define the functions $\lambda^i_{\,jk}, \lambda^i_{\,j\bar{k}}, \lambda^{\bar{\imath}}_{\,j\bar{k}}$ on $G$ by the following relations:
\begin{equation}\label{structure_constant_global}
    [e_j, e_k] = \lambda^i_{\,jk} \, e_i, \qquad
    [e_j, \bar{e}_k] = \lambda^i_{\,j\bar{k}} \, e_i + \lambda^{\bar{\imath}}_{\,j\bar{k}} \, \bar{e}_i,
\end{equation}
and similarly for the conjugations of the above Lie brackets.

Since the vector fields $e_i$ and $\bar{e}_i$ are left-invariant, their Lie brackets are also left-invariant.
Consequently, the coefficients $\lambda^i_{\,jk}, \lambda^i_{\,j\bar{k}}, \lambda^{\bar{\imath}}_{\,j\bar{k}}$ must be constant on $G$.

At the identity $e \in G$, the isomorphism $\mathcal{P}: \mathfrak{g}^{\mathbb{C}} \to T^{\mathbb{C}}_eG$ satisfies
$$
[e_j, e_k](e) = \mathcal{P}\bigl([E_j, E_k]\bigr),
\qquad
[e_j, \bar{e}_k](e) = \mathcal{P}\bigl([E_j, \bar{E}_k]\bigr),
$$
where $[E_j, E_k] = c^i_{\,jk} E_i$ and $[E_j, \bar{E}_k] = c^i_{\,j\bar{k}} E_i + c^{\bar{\imath}}_{\,j\bar{k}} \bar{E}_i$ are the Lie brackets in $\mathfrak{g}^{\mathbb{C}}$.
Evaluating \eqref{structure_constant_global} at $e$ yields
$$
\lambda^i_{\,jk} = c^i_{\,jk}, \quad
\lambda^i_{\,j\bar{k}} = c^i_{\,j\bar{k}}, \quad
\lambda^{\bar{\imath}}_{\,j\bar{k}} = c^{\bar{\imath}}_{\,j\bar{k}}.
$$

%Thus the functions $\lambda$ are globally constant and coincide with the structure constants of the complexified Lie algebra $\mathfrak{g}^{\mathbb{C}}$
%relative to the basis $\{E_i, \bar{E}_i\}$.

%For later convenience, we shall henceforth denote these constant coefficients simply by $\lambda^i_{\,jk},$ $\lambda^i_{\,j\bar{k}},$ $\lambda^{\bar{\imath}}_{\,j\bar{k}}$,
%identifying them with the corresponding Lie algebra structure constants $c^i_{\,jk}, c^i_{\,j\bar{k}}, c^{\bar{\imath}}_{\,j\bar{k}}$.

\begin{example}\label{ch2}We provide an example of a real Lie group of dimension $4$ equipped with a left-invariant almost complex structure.
	According to a classical result of Heintze \cite{Heintze}, every connected homogeneous Riemannian manifold of non-positive sectional curvature can be represented as a
	connected solvable Lie group $ S $ equipped with a left-invariant Riemannian metric. In particular, for symmetric spaces of negative curvature, this realization
	can be explicitly obtained via the \textbf{Iwasawa decomposition} of their isometry group. More precisely, taking the complex hyperbolic plane $ \mathbb{C}H^2 $ as an example, its isometry group is $ SU(1,2) $, whose Iwasawa decomposition is given by \cite{Heintze}
$$
SU(1,2) = K \cdot A \cdot N,
$$
where $ K \cong U(2) $ is a maximal compact subgroup, $ A \cong \mathbb{R} $ is a one-dimensional Abelian subgroup, and $ N $ is the three-dimensional Heisenberg group $ H_3 $.
The solvable subgroup $ S = A\cdot N $ acts simply transitively on $ \mathbb{C}H^2 $, allowing $ \mathbb{C}H^2 $ to be identified with $ S $ as a smooth manifold.
The corresponding Lie algebra $ \mathfrak{ch}_2 = \mathrm{Lie}(S) $ is spanned by four generators $ X, Y, Z, W $, with non-zero Lie brackets:
\begin{equation}\label{commutators of ch_2}
	[X, Y] = \tfrac12 Y, \quad [X, Z] = \tfrac12 Z, \quad [X, W] = W, \quad [Z, Y] = W.
\end{equation}
This Lie algebra is a semidirect product of the Heisenberg ideal $ \langle Y, Z, W \rangle $ and the one-dimensional subalgebra $ \langle X \rangle $, clearly illustrating the
solvable model of a negatively curved homogeneous space.

Define a real linear map $I : \mathfrak{ch}_2 \to \mathfrak{ch}_2$ (with respect to the given basis $X,Y,Z,W$) following the construction in \cite{rankonesymmetricspaces} by
$$
IX = \sqrt{\tfrac{\gamma}{\beta}} W,\quad IY = -Z, \quad IZ = Y, \quad IW = -\sqrt{\tfrac{\beta}{\gamma}} X,
$$
where $\beta, \gamma > 0$ are parameters. One may verify that $I^2 = -\mathrm{id}$, so $I$ is an almost complex structure.

A direct computation of the \textbf{Nijenhuis tensor} $N(X,Y)$
for all basis vectors shows that $N_I \equiv 0$. Hence, $I$ is integrable, i.e., $J$ is a complex structure on $\mathbb{C}H^2$.

In the complexified Lie algebra $\mathfrak{ch}_2^{\mathbb{C}}$, consider the eigenspaces:
\begin{equation}\label{ch_2 structure constant}
	\begin{split}
		&\mathfrak{ch}_2^{1,0} = \mathrm{span}\left\{ e_1 = \frac{1}{\sqrt{2\gamma}}(X - \sqrt{-1}IX),\ e_2 = \frac{1}{\sqrt{2}}(Y -  \sqrt{-1}IY) \right\},\\
		&\mathfrak{ch}_2^{0,1} = \mathrm{span}\left\{\bar e_1 =\frac{1}{\sqrt{2\gamma}}(X +  \sqrt{-1}IX),\ \bar e_2= \frac{1}{\sqrt{2}}(Y + \sqrt{-1}IY) \right\}.
	\end{split}
\end{equation}

All nonzero Lie brackets of $\mathfrak{ch}_2^{\mathbb{C}}$ are as follows:
\begin{align*}
&[e_1,e_2]=\frac{1}{2\sqrt{2\gamma}}e_2,\ [e_1,\bar e_1]=\frac{1}{\sqrt{2\gamma}}\bar e_1-\frac{1}{\sqrt{2\gamma}}e_1,\\
&[e_1,\bar e_2]=\frac{1}{2\sqrt{2\gamma}}\bar e_2,\ [e_2,\bar e_2]=\frac{\sqrt{\beta}}{\sqrt{2}}\bar e_1- \frac{\sqrt{\beta}}{\sqrt{2}} e_1.
\end{align*}
Thus we verified that $[\mathfrak{ch}_2^{1,0}, \mathfrak{ch}_2^{1,0}] \subset \mathfrak{ch}_2^{1,0}$.

\end{example}
\begin{remark}
	A variety of even-dimensional real Lie groups carry integrable almost complex structures. For example, $ GL(2,\mathbb{R}) $\cite{Sasaki1}, $ SL(3,\mathbb{R}) $\cite{Sasaki2},
	$ SU(2) \times SU(2) $ \cite{Louis1}, $ U(2) $\cite{Louis1}, the product $ \mathbb{S}^1 \times \mathbb{S}^3 $ \cite{Daurtseva},
	and various specific nilpotent and solvable real Lie groups of even dimension \cite{Snow,Salamon}.
\end{remark}

Since
\begin{equation}
	\begin{split}
		&d\varphi^i(e_j, e_k)=e_j(\varphi^i(e_k))- e_k(\varphi^i(e_j))-\varphi^i([e_j, e_k])=-\lambda^i_{\ jk},\\
		&d\varphi^i(e_j, \bar{e}_k)=e_j(\varphi^i(\bar{e}_k))- \bar{e}_k(\varphi^i(e_j))-\varphi^i([e_j, \bar{e}_k])=-\lambda^i_{\ j\bar{k}},
	\end{split}
\end{equation}
we get
\begin{equation}\label{structure constant condition}
	d\varphi^i=-\frac{1}{2}\lambda^i_{\ j k}\varphi^j\wedge \varphi^k-\lambda ^i _{\ j\bar k}\varphi^j\wedge \bar \varphi^k.
\end{equation}

\subsection{Left-invariant complex Finsler metric on Lie groups}

Now, we apply the preceding framework to investigate the relationship between the coefficients of the complex Chern-Rund connection and the structure constants of $\mathfrak{g}^{\mathbb{C}}$.

Let $F:T^{1,0}G\rightarrow [0,+\infty)$ be a \textbf{left-invariant complex Finsler metric} on a real Lie group $G$ which is also a complex manifold with the complex structure $J$.
We choose a frame $\{e_i\}$ corresponding with a basis $\{E_i\}$ of $\mathfrak{g}^{1,0}$ and extend it by left translation
to a global frame of left-invariant vector fields for $T^{1,0}G$. By left-invariance, the metric satisfies
$$
F(z, v_z) = F(L_z(e), (L_z)_{*e}(v_e)) = F(e, v_e),
$$
where $e$ denotes the identity element of $G$. Therefore, for any fixed $ v_e \in T_e G $, the function
$$
z \;\longmapsto\; F(L_z(e), (L_z)_{*e}(v_e))
$$
is constant (equal to $ \vert\vert v_e \vert\vert_e $). Moreover, with respect to the frame $\bigl\{e_i,\ \frac{\partial}{\partial v^i}\bigr\}$ for $T^{1,0}G$, the total differential of $F$ is given by
$$
dF = e_i(F)\,\varphi^i +\bar{e}_i(F)\,\bar{\varphi}^i+ \frac{\partial F}{\partial v^i}\,dv^i+\frac{\partial F}{\partial\bar{v}^i}d\bar{v}^i= \frac{\partial F}{\partial v^i}\,dv^i+\frac{\partial F}{\partial\bar{v}^i}d\bar{v}^i.
$$
This implies that, with respect to the frame $\{e_i\}$ for $T^{1,0}G$, a left-invariant complex Finsler metric $F$ on $G$ corresponds
bijectively to a \textbf{complex Minkowski norm} on the tangent space at the identity, namely $T_e^{1,0}G \cong \mathfrak{g}^{1,0}$. Consequently, $F$ can be viewed as a function solely of
the coordinates $(v^1, \ldots, v^n)$, where $v = v^i e_i \in T^{1,0}G$.

Thus we have the following correspondence between the left-invariant complex Finsler metrics on $G$ and the complex Minkowski norms on $\mathfrak{g}^{1,0}$.
\begin{proposition}\label{Lie group And Lie algebra}
	Let $ G $ be a connected real Lie group with Lie algebra $ \mathfrak{g} $. Suppose $G$ is also a complex manifold. Then there is a one-to-one correspondence between left-invariant complex Finsler metrics $ \widetilde{F} $ on $ G $ and
complex Minkowski norms $ F $ on $ \mathfrak{g}^{1,0} $.
\end{proposition}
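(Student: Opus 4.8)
The plan is to exhibit the correspondence by two explicit constructions and to check that they are mutually inverse. First I would fix the global left-invariant frame $\{e_i\}$ attached to a basis $\{E_i\}$ of $\mathfrak{g}^{1,0}$, so that every $v\in T_z^{1,0}G$ is written uniquely as $v=v^ie_i(z)$; the fiber coordinates $(v^1,\dots,v^n)$ are then precisely the components of $(L_{z^{-1}})_{*z}(v)\in T_e^{1,0}G\cong\mathfrak{g}^{1,0}$. This frame is the device that lets me compare fibers over different base points.

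For the forward map, given a left-invariant complex Finsler metric $\widetilde F:T^{1,0}G\to[0,+\infty)$, I would take its restriction to the fiber over the identity, $F:=\widetilde F|_{T_e^{1,0}G}$. Since by definition the restriction of any complex Finsler metric to a single fiber is a complex Minkowski norm, $F$ is automatically a complex Minkowski norm on $\mathfrak{g}^{1,0}$. For the backward map, given a complex Minkowski norm $F$ on $\mathfrak{g}^{1,0}\cong T_e^{1,0}G$, I would set $\widetilde F(z,v):=F\bigl((L_{z^{-1}})_{*z}(v)\bigr)$. The verification requiring care is left-invariance: for $g\in G$ one has $L_{(gz)^{-1}}\circ L_g=L_{z^{-1}}$, so by the chain rule $(L_{(gz)^{-1}})_{*gz}\circ(L_g)_{*z}=(L_{z^{-1}})_{*z}$, whence $\widetilde F\bigl(gz,(L_g)_{*z}v\bigr)=F\bigl((L_{z^{-1}})_{*z}v\bigr)=\widetilde F(z,v)$. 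Continuity, smoothness off the zero section, and the homogeneity $\widetilde F(z,\lambda v)=\vert\lambda\vert\,\widetilde F(z,v)$ are all inherited fiberwise from the corresponding properties of $F$.

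Finally I would check that the two maps are inverse to each other. Building $\widetilde F$ from $F$ and then restricting to the identity recovers $F$, because $(L_{e^{-1}})_{*e}=\mathrm{Id}$. Conversely, starting from a left-invariant $\widetilde F$, the differential computation in the paragraph preceding the statement gives $e_i(\widetilde F)=\bar e_i(\widetilde F)=0$, so $\widetilde F$ is constant along the horizontal directions of the left-invariant frame and therefore depends only on the fiber components $(v^1,\dots,v^n)$; this is exactly the assertion that $\widetilde F(z,v)=F\bigl((L_{z^{-1}})_{*z}v\bigr)$ with $F$ its restriction at $e$. I do not expect any serious obstacle here: the whole argument is elementary once the left-invariant frame is fixed, and the single substantive point is the vanishing of the horizontal derivatives, which is simply the left-invariance condition re-expressed in that frame.
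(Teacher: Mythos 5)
Your proposal is correct and follows essentially the same route as the paper: both identify all fibers of $T^{1,0}G$ with $\mathfrak{g}^{1,0}$ via the left-invariant frame (equivalently, via $(L_{z^{-1}})_{*z}$), restrict to the identity fiber to obtain the Minkowski norm, and observe that left-invariance --- expressed either directly as $\widetilde F(z,v_z)=\widetilde F(e,v_e)$ or through the vanishing of the horizontal derivatives $e_i(\widetilde F)=\bar e_i(\widetilde F)=0$ in the total differential --- forces $\widetilde F$ to depend only on the fiber coordinates. Your write-up is somewhat more explicit than the paper's in constructing the inverse map and checking the two constructions are mutually inverse, but this is a presentational difference, not a different argument.
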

 Because  $F:T^{1,0}G\rightarrow [0,+\infty)$ on $G$ considered are left-invariant, all geometric quantities are constant with respect to the base coordinates on $G$. Hence the complex Chern-Rund connection 1-forms $\omega^i_j$,
 originally defined on $\tilde{G}=T^{1,0}G\setminus\{\mbox{zero section}\}$, depend only on fiber coordinates $v \in \mathfrak{g}^{1,0}\cong T_e^{1,0}G$.
 We may therefore regard the corresponding geometric quantities as functions on $\mathfrak{g}^{1,0}$ (or on $\mathfrak{g}^{1,0}\setminus\{0\}$).
 In this sense, the connection $1$-forms $\omega^i_j$ restrict to left-invariant forms on $\tilde{G}$.

To make this reduction precisely,  we choose a basis $\{E_i\}$ of $\mathfrak{g}^{1,0}$ and extend it by left translation to a global frame $\{e_i\}$ of
left-invariant $(1,0)$-vector fields on $G$, and then identify $\{e_i\}$ with its pull-back frame $\{\pi^\ast e_i\}$ for $\pi^* T^{1,0}G$: at any point $v \in T^{1,0}G$,
we have $e_i\vert_{v} = (p,v; e_i\vert_p) \in \pi^* T^{1,0}G\vert_{v}$. Because of the left-invariance,
all coefficients $g_{i\bar{\jmath}}$, $C_{i\bar{\jmath}k}$, $C_{i\bar{\jmath}\bar{k}}$, hence the connection coefficients $\Gamma^i_{\;jk}$  and $\Gamma^i_{\;j\bar{k}}$ computed
on $\tilde{G}$ are independent of the base coordinates on $G$; they depend only on fiber coordinates $v$.
Thus the complex Chern-Rund connection on the pull-back bundle $\pi^\ast T^{1,0}G$ descends to a left-invariant connection on $\mathfrak{g}^{1,0}\setminus\{0\}$, hence its connection coefficients can be computed purely on the complex Lie algebra $\mathfrak{g}^{1,0}$.

Furthermore, under the above identifications, the fundamental tensor $g_{i\bar{\jmath}}$ and the complex Cartan tensors $C_{i\bar{\jmath}k}$, $C_{i\bar{\jmath}\bar{k}}$ are
calculated explicitly as follows:
 \begin{equation}
	g_{i\bar{\jmath}}=[F^2]_{v^i\bar v^j};
 \end{equation}
 \begin{equation}
	C_{i\bar{\jmath} k}=[F^2]_{v^i\bar v^j v^k}; \ \  C_{i\bar{\jmath}\bar k}=[F^2]_{v^i\bar v^j\bar v^k}.
 \end{equation}
From condition (\ref{Rund_sturcture_equation}) and formula (\ref{structure constant condition}) , we have
\begin{align}
	&d\varphi ^i=\varphi ^j\wedge \omega^i_j+\tau^i;\\
	&d\varphi^i=-\frac{1}{2}\lambda^i_{\ j k}\varphi^j\wedge \varphi^k-\lambda ^i _{\ j\bar k}\varphi^j\wedge \bar \varphi^k.
\end{align}
Setting $\omega^i_j=\Gamma^i_{\ jk}\varphi^k+\Gamma^i_{\ j \bar k }\bar \varphi^k$ and $\tau^i=T^i_{\ jk}\varphi^j\wedge \varphi^k$, then we have
\begin{align*}
	&\varphi ^j\wedge (\Gamma^i_{\ jk}\varphi^k+\Gamma^i_{\ j \bar k }\bar \varphi^k)+T^i_{\ jk}\varphi^j\wedge \varphi^k=
	-\frac{1}{2}\lambda^i_{\ j k}\varphi^j\wedge \varphi^k-\lambda ^i _{\ j\bar k}\varphi^j\wedge \bar \varphi^k.
\end{align*}
So, we get two identities
\begin{eqnarray}
	\varphi ^j\wedge \Gamma^i_{\ jk}\varphi^k+T^i_{\ jk}\varphi^j\wedge \varphi^k&=&-\frac{1}{2}\lambda^i_{\ j k}\varphi^j\wedge \varphi^k;\label{N-a}\\
	\varphi^j \wedge \Gamma^i_{\ j \bar k }\bar\varphi^k&=&-\lambda^i_{\ j\bar k}\varphi^j\wedge \bar\varphi^k.\label{N-b}
\end{eqnarray}
Using \eqref{N-a} and the Cartan lemma, we have
\begin{align}
	\Gamma^i_{\ jk}+T^i_{\ jk}+\frac{1}{2}\lambda^i_{\ j k}=S^i_{\ jk},
\end{align}
where $S^i_{\ jk}=S^i_{\ kj}$ are functions defined on $G$.
Thus, we have the following equalities
\begin{align}
	&\Gamma^i_{\ jk}-\Gamma^i_{\ kj}=-2T^i_{\ jk}-\lambda^i_{\ jk}; \label{Gamma_ik-Gamma_ki} \\
	&\Gamma^i_{\ jk}+\Gamma^i_{\ kj}=2\Gamma^i_{\ jk}+2T^i_{\ jk}+\lambda^i_{\ jk}. \label{Gamma_ik+Gamma_ki}
\end{align}
Using \eqref{N-b},  we directly have $\Gamma^i_{\ j\bar k}=-\lambda^i_{\ j\bar k}$.
\par
Next  using \eqref{Rund metric compatibiity},  we have
\begin{align*}
	\frac{\partial  g_{i\bar{\jmath}}}{\partial   v^l}d  v^l+\frac{\partial  g_{i\bar{\jmath}}}{\partial \bar   v^t}d\bar   v^t
                =&g_{l\bar{\jmath}}(\Gamma ^l_{\ ik}\varphi^k+\Gamma ^l_{\ i \bar k} \bar \varphi^k)+g_{i\bar t}(\Gamma ^{\bar t}_{\ \bar{\jmath} \bar k}\bar \varphi^k+\Gamma ^{\bar t}_{\ \bar{\jmath} k}\varphi^k)\\
				&+C_{i\bar{\jmath} l}[d  v^l+  v^s(\Gamma ^l_{\ sk}\varphi^k+\Gamma ^l_{\ s \bar k} \bar \varphi^k)]\\
				&+C_{i\bar{\jmath}\bar t}[d\bar v^t+\bar v^q(\Gamma ^{\bar t}_{\ \bar q \bar k}\bar \varphi ^k+\Gamma ^{\bar t}_{\ \bar q k}\varphi^k)].
\end{align*}
Comparing their type on both sides of the above equation yields
\begin{align}
    0=g_{l\bar{\jmath}}\Gamma ^l_{\ ik}+g_{i\bar t}\Gamma ^{\bar t}_{\ \bar{\jmath} k}
				+C_{i\bar{\jmath} l}  v^s\Gamma ^l_{\ sk}+
				C_{i\bar{\jmath}\bar t}\bar v^q\Gamma ^{\bar t}_{\ \bar q k}; \label{ik}
\end{align}
Exchanging the indices $i$ and $k$ in the above equation yields
\begin{equation}\label{ki}
    0=g_{l\bar{\jmath}}\Gamma ^l_{\ ki}+g_{k\bar t}\Gamma ^{\bar t}_{\ \bar{\jmath} i}
				+C_{k\bar{\jmath} l}  v^s\Gamma ^l_{\ si}+
				C_{k\bar{\jmath}\bar t}\bar  v^q\Gamma ^{\bar t}_{\ \bar q i}.
\end{equation}
Substituting $\Gamma^i_{\;j \bar k}=-\lambda^i_{\ j\bar k}$ and $\Gamma^{\bar{\imath}}_{\ \bar{\jmath} k}=-\lambda^{\bar{\imath}}_{\;\bar{\jmath} k}$ into \eqref{ki} and \eqref{ik}, respectively,
then adding the resulting equalities together, we have
\begin{equation}
	\begin{split}
	0=&g_{l\bar{\jmath}}(\Gamma ^l_{\ ik}+\Gamma ^l_{\ ki})+g_{i\bar t}\Gamma ^{\bar t}_{\ \bar{\jmath} k}
		+g_{k\bar t}\Gamma ^{\bar t}_{\ \bar{\jmath} i}\\
	&	+C_{i\bar{\jmath} l}  v^s\Gamma ^l_{\ sk}+
		C_{i\bar{\jmath}\bar t}\bar v^q\Gamma ^{\bar t}_{\ \bar q k}
		+C_{k\bar{\jmath} l}  v^s\Gamma ^l_{\ si}+
		C_{k\bar{\jmath}\bar t}\bar  v^q\Gamma ^{\bar t}_{\ \bar q i}.
	\end{split}
\end{equation}
% $\Gamma^i_{\ j\bar k}$ and $\Gamma^{\bar i}_{\ \bar{\jmath} k}$, using the relation $\Gamma^i_{j \bar k}=-\lambda^i_{\ j\bar k}$,
%two equlities (\ref{ik})+(\ref{ki}) and using (\ref{Gamma_ik+Gamma_ki}),
Using  \eqref{Gamma_ik+Gamma_ki}, we obtain
\begin{equation}\label{Gamma_express}
	\begin{split}
    0=&2g_{l\bar{\jmath}}\Gamma ^l_{\ ik}+2g_{l\bar{\jmath}}T^l_{\ ik}+g_{l\bar{\jmath}}\lambda^l_{\ ik}-g_{i\bar t }\lambda^{\bar t}_{\ \bar{\jmath} k}-g_{k \bar t}\lambda^{\bar t}_{\ \bar{\jmath} i}\\
	&+C_{i \bar{\jmath} l}  v^s \Gamma^l_{\ sk}-C_{i\bar{\jmath} \bar t}\bar  v^q\lambda^{\bar t}_{\ \bar q k}+
	C_{k \bar{\jmath} l}  v^s\Gamma^l_{\ s i}-C_{k \bar{\jmath} \bar t}\bar   v^q\lambda^{\bar t}_{\ \bar q i};
	\end{split}
\end{equation}
Now  let's subtract (\ref{ki}) from  (\ref{ik}) yields
\begin{equation}
	\begin{split}
			0=&g_{l\bar{\jmath}}(\Gamma ^l_{\ ik}-\Gamma ^l_{\ ki})+g_{i\bar t}\Gamma ^{\bar t}_{\ \bar{\jmath} k}-g_{k\bar t}\Gamma ^{\bar t}_{\ \bar{\jmath} i}\\
				&+C_{i\bar{\jmath} l}  v^s\Gamma ^l_{\ sk}
				+C_{i\bar{\jmath}\bar t}\bar v^q\Gamma ^{\bar t}_{\ \bar q k}
				-C_{k\bar{\jmath} l}  v^s\Gamma ^l_{\ si}
				-C_{k\bar{\jmath}\bar t}\bar  v^q\Gamma ^{\bar t}_{\ \bar q i}.
	\end{split}
\end{equation}
Using (\ref{Gamma_ik-Gamma_ki}), we obtain
\begin{align}
	0&=g_{l \bar{\jmath}}(-2T^l_{\ ik}-\lambda^l_{\ ik})-g_{i\bar t}\lambda^{\bar t}_{\ \bar{\jmath} k}+g_{k \bar t}\lambda^{\bar t}_{\ \bar{\jmath} i}\nonumber\\
		&\ \ \ +C_{i\bar{\jmath} l}  v^s\Gamma^l_{\ sk}-C_{i\bar{\jmath} \bar t}\bar  v^q\lambda^{\bar t}_{\ \bar q k}-C_{k\bar{\jmath} l}  v^s\Gamma^l_{\ si}+
		C_{k\bar{\jmath} \bar t}\bar   v^q\lambda^{\bar t}_{\ \bar q i}.
\end{align}
Therefore we have
\begin{align}\label{torsion expression}
	2g_{s \bar{\jmath}}T^s_{\ ik}&=-g_{l\bar{\jmath}}\lambda^l_{\ ik}-g_{i\bar t}\lambda^{\bar t}_{\ \bar{\jmath} k}+g_{k \bar t}\lambda^{\bar t}_{\ \bar{\jmath} i} \nonumber \\
		&\ \ \ +C_{i\bar{\jmath} l}  v^s\Gamma^l_{\ sk}-C_{i\bar{\jmath} \bar t}\bar  v^s\lambda^{\bar t}_{\ \bar s k}-C_{k\bar{\jmath} l}  v^s\Gamma^l_{\ si}+
		C_{k\bar{\jmath} \bar t}\bar   v^q\lambda^{\bar t}_{\ \bar q i}.
\end{align}
Substituting (\ref{torsion expression}) into (\ref{Gamma_express}), one gets
\begin{equation}
	2g_{s\bar{\jmath}}\Gamma^s_{\ ik}=2g_{i \bar t}\lambda^{\bar t}_{\ \bar{\jmath} k}-2C_{i \bar{\jmath} l}  v^s\Gamma^l_{\ sk}+2C_{i\bar{\jmath}\bar t}\bar  v^q \lambda^{\bar t}_{\ \bar q k}.\label{N-c}
\end{equation}
Contracting \eqref{N-c} with $  v^i$ yields
\begin{equation}
	g_{s\bar{\jmath}}N^s_k=\lambda_{o\bar{\jmath} k}+g_{\bar{\jmath}\bar t}\lambda^{\bar t}_{\ \bar o k},
\end{equation}
where $N^i_{\;j}=  v^s\Gamma^i_{\ s j}$,\ $C_{i\bar{\jmath}\bar t}v^i=g_{\bar{\jmath}\bar t}$, \ $g_{i\bar{t}}\lambda^{\bar{t}}_{\ \bar{\jmath} k}v^i=\lambda_{o\bar{\jmath} k}$
and $\lambda^{\bar t}_{\ \bar q k}\bar v^q=\lambda^{\bar t}_{\ \bar o k}$.

To sum together, it follows from the above computations that
\begin{align}
	\Gamma^j_{\ ik}&=\lambda^{\ j}_{i\  k}-C^{\ j}_{i \ l}N^l_k+C^{\ j}_{i\ \bar t}\lambda^{\bar t}_{\ \bar ok},
	\quad \Gamma^j_{\ i\bar k}=-\lambda^j_{\ i\bar k}; \label{Gamma_ijk}   \\
	T^j_{\ ik}&=\frac{1}{2}\{\lambda^{\ j}_{k\  i}-\lambda^j_{\ ik}-\lambda^{\ j}_{i \  k}
		+C^{\ j}_{i\  l}N^l_{\;k}-C^{\ j}_{i\  \bar t}\lambda^{\bar t}_{\ \bar o k}-C^{\ j}_{k\  l}N^l_{\;i}+
		C^{\ j}_{k\  \bar t}\lambda^{\bar t}_{\ \bar o i}\} \label{Torsion_ijk},
\end{align}
where
\begin{equation}
	N^j_{\;k}=\lambda^{\ j}_{o \  k}+g^{\bar q j}g_{\bar q\bar t}\lambda^{\bar t}_{\ \bar o k}\label{N^i_k}.
\end{equation}

\begin{remark}
	If $F$ comes from a Hermitian metric, the complex Cartan tensors vanish identically ($C_{i\bar{\jmath}k} = C_{i\bar{\jmath}\bar{k}} = 0$),
	and the complex nonlinear connection coefficients reduce to $N_{\;j}^i = \Gamma_{\;kj}^i v^k$. In this case, the torsion tensor reduces to
	$$
	T^i_{\ jk} = \frac{1}{2}\{\lambda^{\ j}_{k\  i}-\lambda^j_{\ ik}-\lambda^{\ j}_{i \  k}\}.
	$$
	This is precisely the torsion of the Chern connection in Hermitian geometry \cite{YangBoHermitianmanifold,YangBoLieGroup}.
	Our construction thus naturally generalizes the classical Chern connection in Hermitian geometry to complex Finsler geometry.
\end{remark}
\begin{definition}\label{linear operator}
	For an arbitrary non zero vector $v\in\mathfrak{g}^{1,0}$, we define the Hermitian inner product $g_v$ and symmetric product $\mathcal{S}_v$ as follows:
\begin{equation}
g_v(u, w)=g_{i\bar{\jmath}}u^i\bar w^j,\quad\mathcal{S}_v(u, w)=g_{ij}u^iw^j,\quad \forall u,w\in \mathfrak{g}^{1,0}\cong \pi^\ast T_e^{1,0}G.
\end{equation}
Define a connection operator $\mathcal{N}:\mathfrak{g}^{1, 0}\rightarrow \mathfrak{g}^{1, 0}$ by
	\begin{equation}
	g_v(\mathcal{N}(w), u)=g_v(v, [u, \bar w]^{1, 0})+\bar {\mathcal{S}}_v(u, [v, \bar w]^{1, 0}), \ u, w\in\mathfrak{g}^{1, 0},
	\end{equation}
which is a complex linear operator.
\end{definition}

Setting $N^i_{\;\bar{\jmath}}:=-\lambda^i_{\ k\bar{\jmath}}v^k$, and extending $\mathcal{N}$  to $\mathfrak{g}^{\mathbb{C}}$ by setting $\mathcal{N}(w)=N^i_{\;j}w^je_i$ and $\mathcal{N}(\bar w)=N^i_{\;\bar{\jmath}}\bar w^je_i$, we obtain a complex  linear operator $\mathcal{N}:\mathfrak{g}^{\mathbb{C}}\rightarrow \mathfrak{g}^{\mathbb{C}}$, also called  the connection
	operator on $\mathfrak{g}^{\mathbb{C}}$.
	If $F$ comes from a Hermitian metric,  then the symmetric product $\mathcal{S}_v\equiv0$,  hence $\mathcal{N}$ reduces to the Chern connection of $F$.

Therefore,  according to the Definition \ref{K?hler And weakly K?hler},  we have
\begin{theorem} \label{K?hler and weakly K?hler condition}
	Let $ G $ be a connected real Lie group with Lie algebra $ \mathfrak{g} $. Suppose $G$ is also a complex manifold,
 and $ F:T^{1,0}G\rightarrow [0,+\infty) $ a left-invariant complex Finsler metric on $ G $.
	Then the coefficients  of connection and torsion of the complex Chern-Rund connection are given by (\ref{Gamma_ijk}) and (\ref{Torsion_ijk}), respectively. In particular,  $F$ is a K\"ahler Finsler metric iff
	\begin{align}
		\lambda_{k \bar q o}-\lambda_{\bar q ok}-\lambda_{o\bar q k}
		-g_{\bar q \bar t}\lambda^{\bar t}_{\ \bar o k}-C_{k\bar q l}N^l_iv^i+
		C_{k\bar q \bar t}\lambda^{\bar t}_{\ \bar o o}=0
	\end{align}
	or equivalent,
	\begin{align}
	&g_v(w, [ u,  \bar v]^{1, 0})-g_v([v, w]^{1, 0}, u)-g_v(v, [ u, \bar w]^{1, 0})\nonumber\\
	&-\bar{\mathcal{S}}_v(u, [v, \bar w]^{1, 0})-\mathcal{C}_v^+(w, u, \mathcal{N}(v))+\mathcal{C}_v^-(w, u, [v, \bar v]^{1, 0})=0 \label{K?hler condition}
	\end{align}
	for any $u,w\in \mathfrak{g}^{1,0}$;
$F$ is a weakly K\"ahler metric iff
	\begin{align}
		\lambda_{k\bar o o}-\lambda_{\bar o ok}-\lambda_{o\bar o k}-g_{k l}N^l_{\;i}v^i=0,
	\end{align}
	or equivalent,
	\begin{equation}
		g_v(w, [v, \bar v]^{1, 0})-g_v([v, w]^{1, 0}, v)-g_v(v, [v, \bar w]^{1, 0})-\mathcal{S}_v(\mathcal{N}(v), w)=0 \label{weakly K?hler condition}
	\end{equation}
	for any $w\in \mathfrak{g}^{1,0}$.
\end{theorem}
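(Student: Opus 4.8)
The plan is to derive both characterizations from the torsion coefficients \eqref{Torsion_ijk} together with Definition \ref{K?hler And weakly K?hler}. The connection and torsion coefficient formulas \eqref{Gamma_ijk}, \eqref{Torsion_ijk} and \eqref{N^i_k} are already obtained in the computation preceding the theorem, so the only remaining task is to translate the two vanishing-torsion conditions into algebraic identities on $\mathfrak{g}^{1,0}$.

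For the K\"ahler-Finsler part, I would first note that $F$ is K\"ahler-Finsler iff $T^j_{\ ik}=0$. Since $T$ is a tensor and the tautological section $\iota=v^ie_i$ is frame independent, the weaker condition $T^j_{\ ik}v^i=0$ is itself frame independent; evaluating it in a holomorphic frame (where $T^j_{\ ik}=\Gamma^j_{\ ik}-\Gamma^j_{\ ki}$ with $\Gamma^j_{\ ik}$ as in \eqref{ccc}) and invoking Chen and Shen \cite{ChenAndShen}, one sees that $T^j_{\ ik}v^i=0$ already forces $T^j_{\ ik}=0$. Hence $F$ is K\"ahler-Finsler iff $g_{s\bar q}T^s_{\ ik}v^i=0$ for all $k,\bar q$, by nondegeneracy of $(g_{i\bar{\jmath}})$. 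I would then contract \eqref{torsion expression} with $v^i$ and use the homogeneity relations $v^iC_{i\bar q l}=0$ and $v^iC_{i\bar q\bar t}=g_{\bar q\bar t}$ to collapse two of the Cartan terms, which yields exactly the coordinate identity displayed as the first form of the K\"ahler condition.

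To pass to the invariant form \eqref{K?hler condition}, I would pair the free index $k$ with $w^k$ and the conjugate index $\bar q$ with $\bar u^q$, and then recognize each contraction through Definition \ref{linear operator}: writing $\lambda^i_{\ jk}v^je_i=[v,\cdot]^{1,0}$ and $\bar v^q\lambda^{\bar t}_{\ \bar q k}\bar e_t=[\bar v,\cdot]^{0,1}$, and taking conjugates via $\overline{[\bar v,w]^{0,1}}=[v,\bar w]^{1,0}$, the three $\lambda$-terms become $g_v(w,[u,\bar v]^{1,0})$, $g_v([v,w]^{1,0},u)$ and $g_v(v,[u,\bar w]^{1,0})$, the term $g_{\bar q\bar t}\lambda^{\bar t}_{\ \bar o k}$ becomes $\bar{\mathcal{S}}_v(u,[v,\bar w]^{1,0})$, and the two remaining Cartan terms become $\mathcal{C}_v^+(w,u,\mathcal{N}(v))$ and $\mathcal{C}_v^-(w,u,[v,\bar v]^{1,0})$, using $N^l_{\ i}v^ie_l=\mathcal{N}(v)$ from \eqref{N^i_k} and Definition \ref{linear operator}. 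The weakly K\"ahler part is the same computation but lighter: starting directly from $g_{j\bar l}T^j_{\ ik}v^i\bar v^l=0$ (no Chen-Shen reduction is needed), I contract the $v^i$-contracted identity further with $\bar v^q$ and apply the homogeneity relations $\bar v^qg_{\bar q\bar t}=0$, $\bar v^qC_{k\bar q\bar t}=0$ and $\bar v^qC_{k\bar q l}=g_{kl}$; the two barred Cartan terms drop out and the surviving one becomes $g_{kl}N^l_{\ i}v^i$, so that pairing the free index $k$ with $w^k$ and reusing the bracket identifications produces \eqref{weakly K?hler condition}, with $\mathcal{S}_v(\mathcal{N}(v),w)=g_{kl}N^l_{\ i}v^iw^k$.

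The main obstacle is the final, purely bookkeeping translation in each case: identifying the index contractions with the invariant operators $g_v$, $\mathcal{S}_v$, $\bar{\mathcal{S}}_v$, $\mathcal{N}$ and $\mathcal{C}_v^{\pm}$ while correctly tracking the $(1,0)$- and $(0,1)$-projections of the brackets, the conjugation conventions, and the exact homogeneity identities for the Cartan tensors. A subsidiary point requiring care is the reduction from $T^j_{\ ik}=0$ to $T^j_{\ ik}v^i=0$ in the non-holomorphic left-invariant frame, which I handle by contracting with the frame-independent tautological section $\iota$ before applying the Chen-Shen result.
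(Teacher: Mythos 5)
Your proposal is correct and follows essentially the same route as the paper: reduce both conditions to $T^j_{\ ik}v^i=0$ (via Chen--Shen \cite{ChenAndShen} for the K\"ahler case) and $g_{j\bar l}T^j_{\ ik}v^i\bar v^l=0$, contract \eqref{torsion expression} using the homogeneity identities $v^iC_{i\bar q l}=0$, $v^iC_{i\bar q\bar t}=g_{\bar q\bar t}$, $\bar v^qC_{k\bar q\bar t}=0$, $\bar v^qC_{k\bar q l}=g_{kl}$, and then pair the free indices with $w^k,\bar u^q$ to recover the invariant forms via Definition \ref{linear operator}. Your extra care in justifying the Chen--Shen reduction in the non-holomorphic left-invariant frame (via tensoriality of the torsion and the frame-independent tautological section) is a point the paper leaves implicit, but it does not change the argument.
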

\begin{proof}
	According to the Definition \ref{K?hler And weakly K?hler}, we have $T^j_{\;ik}=0$ where $T^j_{\;ik}$ is given by \eqref{Torsion_ijk}.
	Chen and Shen  proved that $T^j_{\;ik}=0$ is equivalent to $T^j_{\;ik}v^j=0$ in \cite{ChenAndShen},
	so we have
			\begin{align*}
		\{g_{k \bar t}\lambda^{\bar t}_{\ \bar q i}&-g_{l\bar q}\lambda^l_{\ ik}-g_{i\bar t}\lambda^{\bar t}_{\ \bar q k}
		-C_{i\bar q \bar t}\bar  v^s\lambda^{\bar t}_{\ \bar s k}-C_{k\bar q l}N^l_{\;i}+
		C_{k\bar q \bar t}\bar   v^s\lambda^{\bar t}_{\ \bar s i}\}v^i=0.
	\end{align*}
	Now we contract the equation with arbitrary vectors $w^k,\bar u^q$ and use Definition \ref{linear operator}, then we obtain a bilinear form
	\begin{align*}
	&g_v(w, [ u,  \bar v]^{1, 0})-g_v([v, w]^{1, 0}, u)-g_v(v, [ u, \bar w]^{1, 0})\nonumber\\
	&-\bar{\mathcal{S}}_v(u, [v, \bar w]^{1, 0})-\mathcal{C}_v^+(w, u, \mathcal{N}(v))+\mathcal{C}_v^-(w, u, [v, \bar v]^{1, 0})=0.
	\end{align*}

	From Definition \ref{K?hler And weakly K?hler}, we have $g_{j\bar l}T^j_{\;ik}v^i\bar v^l =0$ which is equivalent to
	\begin{equation*}
		\{g_{k \bar t}\lambda^{\bar t}_{\ \bar q i}-g_{l\bar q}\lambda^l_{\ ik}-g_{i\bar t}\lambda^{\bar t}_{\ \bar q k}-C_{k\bar q l}N^l_{\;i}\}v^i\bar v^{q}=0.
	\end{equation*}
	Similarly,  contracting the above equation with arbitrary vectors $w^k$, we obtain
	\begin{equation*}
		g_v(w, [v, \bar v]^{1, 0})-g_v([v, w]^{1, 0}, v)-g_v(v, [v, \bar w]^{1, 0})-\mathcal{S}_v(\mathcal{N}(v), w)=0.
	\end{equation*}

	This completes the proof.
\end{proof}

 As explained in Proposition \ref{Lie group And Lie algebra} and the subsequent discussion,
 a left-invariant complex Finsler metric $F$ on $G$ corresponds to a complex Minkowski norm on $\mathfrak{g}^{1,0}$,
 and all associated geometric quantities (the fundamental tensor $g_{v}$, the complex Cartan tensors $\mathcal{C}^+_v$, $\mathcal{C}^-_v$,
 and the complex Chern-Rund connection $\omega^i_k$) depend only on the directional variable $v\in\mathfrak{g}^{1,0}\setminus\{0\}$.
 Consequently, the curvature tensor of $F$ descends to a tensor field on $\mathfrak{g}^{1,0}\setminus\{0\}$,
 and all curvature formulas can be expressed purely in terms of the Lie algebra structure and the quantities derived from $F$ on $\mathfrak{g}^{1,0}$.
\begin{theorem}\label{bisectional curvature and holomorphic curvature}
Let $ G $ be a connected real Lie group  with Lie algebra $ \mathfrak{g} $. Suppose $G$ is also a complex manifold,
 and $ F:T^{1,0}G\rightarrow [0,+\infty) $ a left-invariant complex Finsler metric on $ G $.
For any non-zero vectors $v, w \in \mathfrak{g}^{1,0}$.
Then the holomorphic bisectional curvature in the directions $v$ and $w$ is given by
\[
B(v, w) = \frac{g_v\bigl(R(w, \bar{w})v, v\bigr)}{g_v(v, v)\, g_v(w, w)},
\]
and the holomorphic sectional curvature in the direction $v$ is
\[
K(v) = 2\frac{g_v\bigl(R(v, \bar{v})v, v\bigr)}{F^4(v)},
\]
	 where $R(w,\bar w)v$ is given by
	 \begin{equation}
	\begin{split}
	R(w, \bar w)v=&\mathcal{D}_{[v, \bar w]^{1, 0}}(\mathcal{N}(w))-\mathcal{D}_{\bar{\mathcal{N}}(\bar w)}(\mathcal{N}(w))+\mathcal{N}([w, \bar w]^{1, 0})\\
		&-[\mathcal{N}(w), \bar w]^{1, 0}+[v, [\bar w, w]^{0, 1}]^{1, 0}.
	\end{split}
\end{equation}
%Here $\mathcal{D}$ is the flat connection on $\mathfrak{g}^{1,0}$ (see (\ref{direction derivative}))
%and $ \mathcal{N}$ is the connection operator on $\mathfrak{g}^{1,0}$ (see Definition \ref{linear operator}).
\end{theorem}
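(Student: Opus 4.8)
The plan is to handle the two curvature formulas and the expression for $R(w,\bar w)v$ separately: the first two are almost immediate from the definitions in Section~\ref{pullback bundle}, whereas the third is the computational core. For the bisectional curvature I would begin from the definitions $B(v^*,w^*)=R(v^*,\bar v^*,w^*,\bar w^*)/[\mathcal{G}(v^*,v^*)\mathcal{G}(w^*,w^*)]$ and $R(W,\bar Z,X,\bar Y)=\mathcal{G}(R(X,\bar Y)W,\bar Z)$, which give $R(v^*,\bar v^*,w^*,\bar w^*)=\mathcal{G}(R(w^*,\bar w^*)v^*,\bar v^*)$. Under the left-invariant identification $\pi^*T_e^{1,0}G\cong\mathfrak{g}^{1,0}$, the fundamental tensor $\mathcal{G}$ restricts to $g_v$ and $\mathcal{G}(v^*,v^*)=F^2(v)$ by homogeneity of $F$, so that $B(v,w)=g_v(R(w,\bar w)v,v)/[g_v(v,v)\,g_v(w,w)]$. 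Specializing to $w=v$ then yields $K(v)=2\,g_v(R(v,\bar v)v,v)/F^4(v)$, the factor $2$ being the standard normalization \eqref{holomorphic curvature} of the holomorphic sectional curvature.

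The heart of the argument is the explicit formula for $R(w,\bar w)v=R^i_{\ k\bar l}w^k\bar w^l e_i$, where, by \eqref{Omega^i__expression}, $R^i_{\ k\bar l}$ is the coefficient of $\varphi^k\wedge\bar\varphi^l$ in $\Omega^i=v^k\Omega^i_k$. I would compute $\Omega^i$ from \eqref{Omega^i_j_eta^j}, that is $\Omega^i=d\boldsymbol{\varphi}^{n+i}-\boldsymbol{\varphi}^{n+j}\wedge\omega^i_j$, inserting the left-invariant data $\omega^i_j=\Gamma^i_{\ jk}\varphi^k-\lambda^i_{\ j\bar k}\bar\varphi^k$ and $\boldsymbol{\varphi}^{n+i}=dv^i+N^i_{\ l}\varphi^l+N^i_{\ \bar l}\bar\varphi^l$ with $N^i_{\ \bar l}=-\lambda^i_{\ k\bar l}v^k$. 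Two remarks organize the calculation. First, since $\omega^i_j$ is a horizontal $1$-form, $\boldsymbol{\varphi}^{n+j}\wedge\omega^i_j$ produces only $\boldsymbol{\varphi}^{n+j}\wedge\varphi^k$ and $\boldsymbol{\varphi}^{n+j}\wedge\bar\varphi^k$ terms and contributes nothing of type $\varphi^k\wedge\bar\varphi^l$; hence $R^i_{\ k\bar l}$ comes entirely from $d\boldsymbol{\varphi}^{n+i}$. Second, after differentiating $\boldsymbol{\varphi}^{n+i}$ and substituting the Maurer--Cartan equations \eqref{structure constant condition} for $d\varphi^l$ and $d\bar\varphi^l$, I would re-express $dv^s$ and $d\bar v^s$ in the adapted coframe $\{\varphi,\bar\varphi,\boldsymbol{\varphi}^{n+\cdot},\bar{\boldsymbol{\varphi}}^{n+\cdot}\}$ and collect the $\varphi^k\wedge\bar\varphi^l$-slot. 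This leaves five scalar contributions: two built from $\partial N^i_{\ k}/\partial v^s$ and $\partial N^i_{\ k}/\partial\bar v^s$, and three linear in the structure constants $\lambda$.

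It then remains to repackage these five contributions invariantly. Using Definition~\ref{linear operator}, the identity $\mathcal{N}(\bar w)=N^i_{\ \bar l}\bar w^l e_i=-[v,\bar w]^{1,0}$, and the conjugate identification of $\overline{N^s_{\ l}}\,\bar w^l\bar e_s$ with $\bar{\mathcal{N}}(\bar w)$, the two derivative terms assemble into the directional derivatives $\mathcal{D}_{[v,\bar w]^{1,0}}(\mathcal{N}(w))$ and $\mathcal{D}_{\bar{\mathcal{N}}(\bar w)}(\mathcal{N}(w))$. The three remaining terms become $\mathcal{N}([w,\bar w]^{1,0})$, $[\mathcal{N}(w),\bar w]^{1,0}$ and $[v,[\bar w,w]^{0,1}]^{1,0}$, after invoking the reality relations between holomorphic and antiholomorphic structure constants, e.g.\ $\overline{\lambda^m_{\ l\bar k}}=-\lambda^{\bar m}_{\ k\bar l}$, obtained by conjugating \eqref{structure_constant_global}.

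The main obstacle is exactly this final bookkeeping. The delicate point is that $N^i_{\ \bar l}=-\lambda^i_{\ k\bar l}v^k$ is holomorphic and linear in $v$, whereas $N^i_{\ l}$ also depends on $\bar v$ through the inverse metric in \eqref{N^i_k}; consequently $\partial N^i_{\ l}/\partial\bar v^s$ is genuinely nonzero and is precisely what produces the second directional-derivative term $\mathcal{D}_{\bar{\mathcal{N}}(\bar w)}(\mathcal{N}(w))$. Tracking which $dv^s$ and $d\bar v^s$ pieces land in the $\varphi^k\wedge\bar\varphi^l$-slot, and matching every sign against the conjugation identities for $\lambda$, is where the computation must be carried out most carefully.
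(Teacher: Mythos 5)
Your strategy coincides with the paper's proof: reduce $B$ and $K$ to the definitions under the left-invariant identification $\pi^*T^{1,0}_eG\cong\mathfrak{g}^{1,0}$, compute $\Omega^i=d\boldsymbol{\varphi}^{n+i}-\boldsymbol{\varphi}^{n+j}\wedge\omega^i_j$ with the left-invariant data $\omega^i_j=\Gamma^i_{\;jk}\varphi^k-\lambda^i_{\;j\bar{k}}\bar{\varphi}^k$ and $N^i_{\;\bar{l}}=-\lambda^i_{\;k\bar{l}}v^k$, read off the $\varphi^k\wedge\bar{\varphi}^l$ coefficient, and rewrite it invariantly using $\mathcal{D}$, $\mathcal{N}$ and the Lie brackets. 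Your two organizing observations --- that $\boldsymbol{\varphi}^{n+j}\wedge\omega^i_j$ contributes nothing to the $\varphi\wedge\bar{\varphi}$ slot, and that the $\bar{v}$-dependence of $N^i_{\;l}$ through the inverse metric in \eqref{N^i_k} is exactly what produces $\mathcal{D}_{\bar{\mathcal{N}}(\bar{w})}(\mathcal{N}(w))$ --- are both correct and consistent with the paper's computation \eqref{OOOO}, as is your conjugation identity $\overline{\lambda^m_{\;l\bar{k}}}=-\lambda^{\bar{m}}_{\;k\bar{l}}$.

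There is, however, one step that fails as written: the identity $R(w,\bar{w})v=R^i_{\;k\bar{l}}w^k\bar{w}^l e_i$. The paper's curvature operator is $R(X,\bar{Y})W=W^lX^k\bar{Y}^j\,{}^2R^i_{\;l\bar{\jmath}k}\,e_i$, in which the barred slot index stands \emph{before} the unbarred one, while ${}^2R^i_{\;jk\bar{l}}$ is defined as the coefficient of $\varphi^k\wedge\bar{\varphi}^l$ in $\Omega^i_j$; by antisymmetry of the wedge product ${}^2R^i_{\;l\bar{\jmath}k}=-\,{}^2R^i_{\;lk\bar{\jmath}}$, so that $R(w,\bar{w})v=-R^i_{\;k\bar{\jmath}}w^k\bar{w}^j e_i$, which is precisely \eqref{R(w,bar w)v}. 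This minus sign is not absorbed elsewhere in the computation: for instance, the first term of $R^i_{\;k\bar{\jmath}}$ is $-[N^i_{\;k}]_{v^l}\lambda^l_{\;s\bar{\jmath}}v^s=[N^i_{\;k}]_{v^l}N^l_{\;\bar{\jmath}}$, which, using your own identity $\mathcal{N}(\bar{w})=-[v,\bar{w}]^{1,0}$, contracts to $\mathcal{D}_{\mathcal{N}(\bar{w})}(\mathcal{N}(w))=-\mathcal{D}_{[v,\bar{w}]^{1,0}}(\mathcal{N}(w))$; only after the overall minus sign does this become the term $+\mathcal{D}_{[v,\bar{w}]^{1,0}}(\mathcal{N}(w))$ appearing in the theorem. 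Carrying your identity through literally therefore yields the \emph{negative} of the stated expression for $R(w,\bar{w})v$, hence the negative of the claimed $B$ and $K$. A smaller point: you attribute the factor $2$ in $K(v)$ to the ``standard normalization \eqref{holomorphic curvature}'', but that definition contains no factor $2$; this factor is asserted rather than derived in the paper as well, so neither your argument nor the paper's actually produces it from the stated definitions.
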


\begin{proof}
According to Definition \ref{linear operator}, we have
\[
\boldsymbol{\varphi}^{n+i}=dv^i+v^j\omega^i_j=dv^i+N^i_{\;k}\varphi^k+N^i_{\;\bar{\jmath}}\bar{\varphi}^j.
\]

Now using the formula $\Omega^i_j=d\omega^i_j-\omega^k_j\wedge\omega^i_k$, we compute the curvature of the complex Chern-Rund connection. From (\ref{Omega^i__expression}),
\begin{align}
\Omega^i &= d(\boldsymbol{\varphi}^{n+i})-\boldsymbol{\varphi}^{n+s}\wedge\omega^i_s\nonumber \\
&= dN^i_{\;s}\wedge\varphi^s+N^i_{\;s}d\varphi^s+dN^i_{\;\bar{t}}\wedge\bar{\varphi}^t+N^i_{\;\bar{t}}d\bar{\varphi}^t
      -\boldsymbol{\varphi}^{n+s}\wedge(\Gamma^i_{\;sj}\varphi^j+\Gamma^i_{\;s\bar{\jmath}}\bar{\varphi}^j)\nonumber\\
&= \Bigl\{[N^i_{\;k}]_{v^l}N^l_{\;\bar{\jmath}}+[N^i_{\;k}]_{\bar{v}^t}N^{\bar{t}}_{\;\bar{\jmath}}+N^i_{\;l}\Gamma^l_{\;k\bar{\jmath}}
       -[N^i_{\;\bar{\jmath}}]_{v^l}N^l_{\;k}-[N^i_{\;\bar{\jmath}}]_{\bar{v}^t}N^{\bar{t}}_{\;k}-N^i_{\;\bar{t}}\Gamma^{\bar{t}}_{\;\bar{\jmath}k}\Bigr\}\varphi^k\wedge\bar{\varphi}^j\nonumber\\
&\quad +\Bigl\{N^i_{\;l}T^l_{\;jk}-[N^i_{\;k}]_{v^l}N^l_{\;j}-[N^i_{\;k}]_{\bar{v}^t}N^{\bar{t}}_{\;j}-N^i_{\;l}\Gamma^l_{\;kj}\Bigr\}\varphi^j\wedge\varphi^k\nonumber\\
&\quad +\Bigl\{N^i_{\;\bar{t}}T^{\bar{t}}_{\;\bar{\jmath}\bar{k}}-[N^i_{\;\bar{k}}]_{v^l}N^l_{\;\bar{\jmath}}-[N^i_{\;\bar{k}}]_{\bar{v}^t}N^{\bar{t}}_{\;\bar{\jmath}}
        -N^i_{\;\bar{t}}\Gamma^{\bar{t}}_{\;\bar{k}\bar{\jmath}}\Bigr\}\bar{\varphi}^j\wedge\bar{\varphi}^k\nonumber\\
&\quad +\Bigl\{[N^i_{\;k}]_{v^l}-\Gamma^i_{\;lk}\Bigr\}\boldsymbol{\varphi}^{n+l}\wedge\varphi^k
        +\Bigl\{[N^i_{\;\bar{k}}]_{v^l}-\Gamma^i_{\;l\bar{k}}\Bigr\}\boldsymbol{\varphi}^{n+l}\wedge\bar{\varphi}^k\nonumber\\
&\quad +[N^i_{\;k}]_{\bar{v}^l}\boldsymbol{\bar{\varphi}}^{n+l}\wedge\varphi^k
        +[N^i_{\;\bar{k}}]_{\bar{v}^l}\boldsymbol{\bar{\varphi}}^{n+l}\wedge\bar{\varphi}^k,\label{OOOO}
\end{align}
where $[\cdot]_{v^l},[\cdot]_{\bar{v}^t}$ denote the derivatives of $[\cdot]$ with respect to $v^l$ and $\bar{v}^t$, etc.

Since $N^i_{\;\bar{\jmath}}=\Gamma^i_{\;s\bar{\jmath}}v^s=-\lambda^i_{\;s\bar{\jmath}}v^s$, we obtain
\[
[N^i_{\;\bar{\jmath}}]_{v^s}=\Gamma^i_{\;s\bar{\jmath}}=-\lambda^i_{\;s\bar{\jmath}},\qquad [N^i_{\;\bar{\jmath}}]_{\bar{v}^t}=0.
\]

Comparing the expressions for $\Omega^i$ given by (\ref{OOOO}) with (\ref{Omega^i__expression}) yields
\begin{align}
R^i_{\;k\bar{\jmath}}=-[N^i_{\;k}]_{v^l}\lambda^l_{\;s\bar{\jmath}}v^s+[N^i_{\;k}]_{\bar{v}^t}N^{\bar{t}}_{\;\bar{\jmath}}
  -N^i_{\;l}\lambda^l_{\;k\bar{\jmath}}+\lambda^i_{\;l\bar{\jmath}}N^l_{\;k}
  -\lambda^i_{\;s\bar{t}}v^s\lambda^{\bar{t}}_{\;\bar{\jmath}k}. \label{2R^i_{pq}}
\end{align}

We regard $\mathfrak{g}^{1,0}\setminus\{0\}$ as a complex manifold and identify
its tangent space $T^{1,0}_v(\mathfrak{g}^{1,0}\setminus\{0\})$  at the point $v\in\mathfrak{g}^{1,0}$ with $\mathfrak{g}^{1,0}\setminus\{0\}$ itself.
We introduce a \textbf{flat connection} $\mathcal{D}$ on $\mathfrak{g}^{1,0}\setminus\{0\}$ defined by the usual \textbf{directional derivative}: for any $w\in\mathfrak{g}^{1,0}$,
\begin{equation}\label{direction derivative}
\mathcal{D}_w v=w(v)=w^i\,\mathbf{e}_i(v^j)\,\mathbf{e}_j=w,\qquad\text{and}\qquad\mathcal{D}_{\bar{w}}v=0,
\end{equation}
where $\{\mathbf{e}_i=\frac{\partial}{\partial v^i},i=1,\dots,n\}$ is a global coordinate frame of $\mathfrak{g}^{1,0}$ and $\mathbf{e}_i(v^j)=\delta^j_i$.

With this notation we can rewrite the curvature operator as
\begin{align}\label{R(w,bar w)v}
&R(w,\bar{w})v\nonumber\\
&=v^lw^k\bar{w}^j\cdot{}^2R^i_{\;l\bar{\jmath}k}\;\mathbf{e}_i=-w^k\bar{w}^j\cdot R^i_{\;k\bar{\jmath}}\,\mathbf{e}_i \nonumber\\
&=w^k\bar{w}^j\Bigl\{[N^i_{\;k}]_{v^l}\lambda^l_{\;s\bar{\jmath}}v^s-[N^i_{\;k}]_{\bar{v}^t}N^{\bar{t}}_{\;\bar{\jmath}}
     +N^i_{\;l}\lambda^l_{\;k\bar{\jmath}}-\lambda^i_{\;l\bar{\jmath}}N^l_{\;k}
     +\lambda^i_{\;s\bar{t}}v^s\lambda^{\bar{t}}_{\;\bar{\jmath}k}\Bigr\}\,\mathbf{e}_i\nonumber\\
&=\mathcal{D}_{[v,\bar{w}]^{1,0}}\bigl(\mathcal{N}(w)\bigr)
  -\mathcal{D}_{\bar{\mathcal{N}}(\bar{w})}\bigl(\mathcal{N}(w)\bigr)
  +\mathcal{N}\bigl([w,\bar{w}]^{1,0}\bigr)\nonumber\\
&\quad -[\mathcal{N}(w),\bar{w}]^{1,0}
  +[v,[\bar{w},w]^{0,1}]^{1,0}.
\end{align}

Replacing $v$ by $w$ in the above formula and using $\mathbf{e}_i(N^j_{\;s}v^s)=\mathbf{e}_i(N^j_{\;s})v^s+N^j_{\;i}$, we obtain
\begin{equation}\label{R(v,bar v)v}
R(v,\bar{v})v=\mathcal{D}_{[v,\bar{v}]^{1,0}}\bigl(\mathcal{N}(v)\bigr)
            -\mathcal{D}_{\bar{\mathcal{N}}(\bar{v})}\bigl(\mathcal{N}(v)\bigr)
            -[\mathcal{N}(v),\bar{v}]^{1,0}
            +[v,[\bar{v},v]^{0,1}]^{1,0}.
\end{equation}

Consequently, the \textbf{holomorphic bisectional curvature} in the directions $v,w\in\mathfrak{g}^{1,0}$ is
\begin{equation}\label{B(uv)}
B(v,w)=\frac{g_v\!\bigl(R(w,\bar{w})v,v\bigr)}{g_v(v,v)\,g_v(w,w)},
\end{equation}
where $R(w,\bar{w})v$ is given by (\ref{R(w,bar w)v}); and the \textbf{holomorphic sectional curvature} in the direction $v\in\mathfrak{g}^{1,0}$ is
\begin{equation}\label{K(v)}
K(v)=2\frac{g_v\!\bigl(R(v,\bar{v})v,v\bigr)}{F^4(v)},
\end{equation}
with $R(v,\bar{v})v$ given by (\ref{R(v,bar v)v}).
\end{proof}

\subsection{The case of complex Lie group}
Note that a complex Lie group is naturally a real Lie group. In this section, we shall investigate left-invariant complex Finsler metrics on complex Lie groups.
\begin{definition}
	A complex Lie group $G$ is  a complex  manifold such that the group multiplication operation $(x, y)\to xy: G\times G\to G$ and
	 the inverse operation $x\to x^{-1}: G\to G$ are both complex  analytic.
\end{definition}
As a complex manifold, there  exists a left-invariant complex structure $J$ on $G$. Let $\mathfrak{g}$ be the real Lie algebra of $G$. Let $I$ be the endomorphism in $\mathfrak{g}$ corresponding with $J$.
According to the \textbf{Example 1.1 of Chapter IX} in \cite{FoundationOfDG2}, we have
$$
\mathrm{ad}_x \circ I = I \circ \mathrm{ad}_x \qquad \forall x \in \mathfrak{g}.
$$
where $ \mathrm{ad}_x(y) = [x, y] $.

Using complex structure $J_e$ on $T_eG \cong \mathfrak{g}$,  we can decompose $\mathfrak{g}^{\mathbb{C}}:=\mathfrak{g}\otimes_{\mathbb{R}} \mathbb{C}=\mathfrak{g}^{1, 0}\oplus \mathfrak{g}^{0, 1}$ which
 are eigenspaces of eigenvalue $\sqrt{-1}$ and $-\sqrt{-1}$ respectively.

\begin{proposition}\label{[g^{1, 0}, g^{0, 1}]=0}
	For a complex Lie group $G$ whose real Lie algebra is $\mathfrak{g}$,  we have
$$[\mathfrak{g}^{1, 0}, \mathfrak{g}^{0, 1}]=0.$$
\end{proposition}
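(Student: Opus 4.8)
The plan is to exploit the commutation relation $\mathrm{ad}_x \circ I = I \circ \mathrm{ad}_x$ recalled just above, which for a complex Lie group expresses the fact that the adjoint action commutes with the complex structure endomorphism $I$. Spelled out, this relation reads $[x, Iy] = I[x, y]$ for all $x, y \in \mathfrak{g}$; using the antisymmetry of the bracket one also obtains $[Ix, y] = -[y, Ix] = -I[y, x] = I[x, y]$, so that $I$ may be transported across either slot of the bracket.

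First I would extend both $I$ and the Lie bracket $\mathbb{C}$-bilinearly to the complexification $\mathfrak{g}^{\mathbb{C}}$. Since the two identities above are $\mathbb{R}$-bilinear in $(x, y)$, they persist under complex-bilinear extension and therefore hold for all $x, y \in \mathfrak{g}^{\mathbb{C}}$. On $\mathfrak{g}^{\mathbb{C}}$ the operator $I$ is diagonalized by construction: it acts as multiplication by $\sqrt{-1}$ on $\mathfrak{g}^{1,0}$ and as multiplication by $-\sqrt{-1}$ on $\mathfrak{g}^{0,1}$, these being precisely its $+\sqrt{-1}$ and $-\sqrt{-1}$ eigenspaces.

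The key step is then to evaluate $I[u, \bar w]$ for arbitrary $u \in \mathfrak{g}^{1,0}$ and $\bar w \in \mathfrak{g}^{0,1}$ in two different ways. Moving $I$ through the first slot gives $I[u, \bar w] = [Iu, \bar w] = \sqrt{-1}\,[u, \bar w]$, whereas moving it through the second slot gives $I[u, \bar w] = [u, I\bar w] = -\sqrt{-1}\,[u, \bar w]$. Comparing the two expressions yields $\sqrt{-1}\,[u, \bar w] = -\sqrt{-1}\,[u, \bar w]$, hence $2\sqrt{-1}\,[u, \bar w] = 0$ and therefore $[u, \bar w] = 0$. As $u \in \mathfrak{g}^{1,0}$ and $\bar w \in \mathfrak{g}^{0,1}$ were arbitrary, we conclude $[\mathfrak{g}^{1,0}, \mathfrak{g}^{0,1}] = 0$.

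I do not expect a serious obstacle in this argument; the only point demanding a little care is the passage to the complexification, namely verifying that the commutation relation, originally valid for real elements of $\mathfrak{g}$, genuinely extends $\mathbb{C}$-bilinearly so that it may legitimately be applied to the eigenvectors $u$ and $\bar w$ lying in $\mathfrak{g}^{1,0}$ and $\mathfrak{g}^{0,1}$, respectively.
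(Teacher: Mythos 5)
Your proof is correct and follows essentially the same route as the paper: both exploit the relation $\mathrm{ad}_x \circ I = I \circ \mathrm{ad}_x$ (extended to $\mathfrak{g}^{\mathbb{C}}$) in each slot of the bracket, apply it to eigenvectors of opposite eigenvalues, and deduce that $v=[u,\bar w]$ satisfies $\sqrt{-1}\,v = -\sqrt{-1}\,v$ (the paper writes this as $Iv = \sqrt{-1}\,v$ and $Iv = -\sqrt{-1}\,v$, then uses invertibility of $I$), forcing $v=0$. Your explicit remark about extending the commutation relation $\mathbb{C}$-bilinearly to the complexification is a point the paper passes over silently, so no gap there.
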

\begin{proof}
	Taking $x\in\mathfrak{g}^{1, 0}$ and $y\in\mathfrak{g}^{0, 1}$. By the given commutativity $ \text{ad}_x \circ I = I \circ \text{ad}_x $,  apply both sides to $ y $, we get
\begin{equation*}
	\text{ad}_x(Iy) = I(\text{ad}_x y).
\end{equation*}
Since $ y \in \mathfrak{g}^{0, 1} $,  $ Iy = -\sqrt{-1}y $. Substitute this into the left-hand side yields
\begin{equation*}
	\text{ad}_x(-\sqrt{-1}y) = -\sqrt{-1} [x,  y].
\end{equation*}
The right-hand side is $ I([x,  y]) $. Let $ v = [x,  y] $,  so we have
\begin{equation}\label{-iz=Iz}
	-\sqrt{-1} v = Iv.
\end{equation}
Now apply the commutativity to $ \text{ad}_y \circ I = I \circ \text{ad}_y $ with $ x \in \mathfrak{g}^{1, 0} $ yields
\begin{equation*}
	\text{ad}_y(Ix) = I(\text{ad}_y x).
\end{equation*}
Since $ x \in \mathfrak{g}^{1, 0} $,  $ Ix = \sqrt{-1}x $. The left-hand side of the above equality reduces to
\begin{equation*}
	\text{ad}_y(\sqrt{-1}x) = \sqrt{-1} [y,  x] = -\sqrt{-1} [x,  y] = -\sqrt{-1} v.
\end{equation*}

The right-hand side is $ I([y,  x]) = -Iv $. Thus
\begin{equation}\label{-iz=-Iz}
	-\sqrt{-1} v = -Iv \quad \Rightarrow \quad \sqrt{-1} v = Iv.
\end{equation}
From (\ref{-iz=Iz}) and (\ref{-iz=-Iz}),  we have
\begin{equation*}
	-\sqrt{-1} v = Iv \quad \text{and} \quad \sqrt{-1} v = Iv.
\end{equation*}
Adding these two equations yields
\begin{equation*}
	0 = 2 Iv \quad \Rightarrow \quad Iv = 0.
\end{equation*}
Since $ I^2 = -Id $,  $ I $ is invertible (no zero eigenvalues),  so $ v = 0 $. Therefore
\begin{equation*}
	[x,  y] = 0 \quad \forall x \in \mathfrak{g}^{1, 0},  \,  y \in \mathfrak{g}^{0, 1}.
\end{equation*}
Those complete the proof.
\end{proof}
It follows from Proposition \ref{[g^{1, 0}, g^{0, 1}]=0} that the structural constants $\lambda^i_{\;j\bar k}$ and $\lambda^{\bar i}_{\;j\bar k}$ in (\ref{structure_constant_global}) vanish identically,
so that the non-zero structural constants of $\mathfrak{g}^{\mathbb{C}}$ are $\lambda^i_{\;jk}$ and their conjugations.
%Therefore,  it follows immediately from (\ref{Gamma_ijk}) and (\ref{Torsion_ijk}) that
\begin{theorem}\label{CBM}
	Suppose $G$ is a complex Lie group and $F:T^{1,0}G\rightarrow[0,+\infty)$ is a left-invariant complex Finsler metric on $G$. Then
	$F$ must be a complex Berwald metric with vanishing holomorphic bisectional curvature.
\end{theorem}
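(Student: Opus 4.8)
The plan is to specialize the explicit Lie-algebraic formulas for the connection and curvature, obtained in the preceding theorems, to the present setting. The starting point is Proposition \ref{[g^{1, 0}, g^{0, 1}]=0}, which gives $[\mathfrak{g}^{1,0},\mathfrak{g}^{0,1}]=0$ for a complex Lie group; consequently every mixed structure constant in \eqref{structure_constant_global} vanishes, i.e. $\lambda^i_{\,j\bar k}=\lambda^{\bar\imath}_{\,j\bar k}=0$ and likewise $\lambda^{\bar t}_{\,\bar m k}=0$, so that only $\lambda^i_{\,jk}$ and their conjugates survive. The whole argument then consists in feeding this vanishing into the formulas \eqref{N^i_k}, \eqref{Gamma_ijk} and the curvature expression \eqref{R(w,bar w)v}.

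First I would establish the Berwald property. Since $N^i_{\,\bar\jmath}=-\lambda^i_{\,k\bar\jmath}v^k$, the vanishing of the mixed constants gives $N^i_{\,\bar\jmath}=0$; moreover both terms on the right-hand side of \eqref{N^i_k} are assembled from mixed structure constants, so $N^j_{\,k}=0$ as well, i.e. the connection operator $\mathcal N$ vanishes identically. Substituting $N^l_{\,k}=0$ together with the vanishing mixed constants into \eqref{Gamma_ijk} then yields $\Gamma^j_{\,ik}=0$ and $\Gamma^j_{\,i\bar k}=-\lambda^j_{\,i\bar k}=0$. In particular the horizontal Chern-Rund coefficients are (identically zero, hence) independent of the fibre variables $v,\bar v$, so Definition \ref{Berwald Proposition} shows that $F$ is a complex Berwald metric.

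Next I would compute the holomorphic bisectional curvature via Theorem \ref{bisectional curvature and holomorphic curvature}. Every one of the five terms in the expression for $R(w,\bar w)v$ in \eqref{R(w,bar w)v} is annihilated by the structure of a complex Lie group: the brackets $[v,\bar w]^{1,0}$, $[w,\bar w]^{1,0}$ and $[\bar w,w]^{0,1}$ all vanish because $[\mathfrak{g}^{1,0},\mathfrak{g}^{0,1}]=0$, while the remaining pieces contain $\mathcal N(w)$ or $\bar{\mathcal N}(\bar w)$, which are zero since $\mathcal N\equiv 0$. Hence $R(w,\bar w)v=0$ for all nonzero $v,w\in\mathfrak{g}^{1,0}$, and therefore $B(v,w)=0$. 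Equivalently, one may observe that $\omega^i_j=\Gamma^i_{\,jk}\varphi^k+\Gamma^i_{\,j\bar k}\bar\varphi^k=0$, so that the curvature forms $\Omega^i_j=d\omega^i_j-\omega^k_j\wedge\omega^i_k$ vanish outright, making the vanishing of every curvature component transparent.

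The one point that requires genuine care is the reading of the compact symbol $\lambda^{\,j}_{i\,k}$ appearing as the leading term of \eqref{Gamma_ijk}. Retracing its derivation from \eqref{N-c} shows that it stands for $g^{\bar m j}g_{i\bar t}\lambda^{\bar t}_{\,\bar m k}$, so it is built entirely from the mixed structure constants and does vanish here; it must not be confused with the holomorphic constant $\lambda^j_{\,ik}$, which would survive. This is the only place where the bookkeeping could go wrong, and it is precisely the point that forces $\Gamma^j_{\,ik}=0$ rather than $\Gamma^j_{\,ik}=\lambda^j_{\,ik}$. Once this is settled the proof is a direct specialization with no further obstacle; as a sanity check, the conclusion $\omega^i_j\equiv 0$ recovers the classical fact that a complex Lie group carrying a left-invariant Hermitian metric is Chern-flat.
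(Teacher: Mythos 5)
Your proposal is correct and follows essentially the same route as the paper: invoke Proposition \ref{[g^{1, 0}, g^{0, 1}]=0} to kill all mixed structure constants, deduce $N^j_{\;k}=0$ and $\Gamma^j_{\;ik}=0$ from \eqref{N^i_k} and \eqref{Gamma_ijk} (including the crucial reading of $\lambda^{\ j}_{i\ k}$ as $g^{\bar m j}g_{i\bar t}\lambda^{\bar t}_{\,\bar m k}$, which is exactly how the paper computes it), conclude Berwald via Definition \ref{Berwald Proposition}, and observe that every term of $R(w,\bar w)v$ in Theorem \ref{bisectional curvature and holomorphic curvature} vanishes. Your extra remark that $\omega^i_j\equiv 0$ makes the curvature vanishing transparent is a nice supplement to the paper's terser statement, but it is the same argument.
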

\begin{proof}
By Proposition \ref{[g^{1, 0}, g^{0, 1}]=0}, we have $\lambda^i_{\;j\bar k}=\lambda^{\bar i}_{\;j\bar k}=0$. Using \eqref{N^i_k}, we have
	\begin{align*}
		&N^j_{\;k}=\lambda_{o\;\;k}^{\;j}+g^{\bar{q}j}g_{\bar{q}\bar{l}}\lambda^{\bar{l}}_{\;\bar{o}k}
		=g^{\bar{l}j}g_{s\bar{i}}\lambda^{\bar{i}}_{\;\bar{l}k}v^s+g^{\bar{q}j}g_{\bar{q}\bar{l}}\lambda^{\bar{l}}_{\;\bar{s}k}\bar{v}^s
		=0.
	\end{align*}
	Using \eqref{Gamma_ijk}, we obtain
	\begin{align*}
		&\Gamma^j_{\;ik}=\lambda^{\ j}_{i\  k}-C^{\ j}_{i \ l}N^l_{\;k}+C^{\ j}_{i\ \bar t}\lambda^{\bar t}_{\ \bar ok}
		=g_{i\bar{s}}g^{\bar{l}j}\lambda^{\bar{s}}_{\;\bar{l}k}+C^{\ j}_{i\ \bar t}\lambda^{\bar t}_{\ \bar lk}\bar{v}^l=0.
	\end{align*}
	Thus, by Definition \ref{Berwald Proposition}, $F$ is a complex Berwald metric on a complex Lie group $G$.

	The holomorphic bisectional curvature being zero is directly obtained from Theorem \ref{bisectional curvature and holomorphic curvature}.
\end{proof}

% 	\begin{equation}
% 		\Gamma^j_{\  ik}=0, \quad \Gamma^j_{\ i\bar k}=0 \quad \text{and}  \quad T^j_{\;ik}=-\frac{1}{2}\lambda^j_{\ ik},\ 1\leqslant i,j,k\leqslant n.
% 	\end{equation}
% Furthermore, $F$ is a left-invariant K\"ahler-Finsler metric iff $T^i_{\ jk}=0$ iff
% \begin{equation*}
% 	 \lambda^j_{\ ik}=0,\ \ 1\leqslant i,j,k\leqslant n.
% \end{equation*}
%

\begin{corollary}
	Suppose $G$ is a complex Lie group and $F:T^{1,0}G\rightarrow[0,+\infty)$ is a left-invariant complex Finsler metric on $G$.
	Then $F$ is a K\"ahler-Berwald metric iff $G$ is an Abelian  complex Lie group.
\end{corollary}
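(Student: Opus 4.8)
The plan is to reduce the K\"ahler-Berwald property to the vanishing of the Chern-Rund torsion, and then to identify this torsion with (half of) the structure constants of $\mathfrak{g}^{1,0}$. Since Theorem \ref{CBM} already guarantees that every left-invariant complex Finsler metric $F$ on a complex Lie group $G$ is a complex Berwald metric, $F$ is K\"ahler-Berwald exactly when it is a K\"ahler-Finsler metric, i.e. when $T^i_{\;jk}\equiv 0$. Hence everything hinges on computing the torsion purely on $\mathfrak{g}^{1,0}$.

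The first main step is to show that on a complex Lie group the torsion takes the particularly simple form $T^i_{\;jk}=-\tfrac12\lambda^i_{\;jk}$. I would obtain this from the algebraic identity \eqref{Gamma_ik-Gamma_ki},
$$
\Gamma^i_{\;jk}-\Gamma^i_{\;kj}=-2T^i_{\;jk}-\lambda^i_{\;jk},
$$
which follows purely from the antisymmetry of $T^i_{\;jk}$ and $\lambda^i_{\;jk}$ in $j,k$ together with the symmetry of the auxiliary quantity $S^i_{\;jk}$. By Theorem \ref{CBM} the connection coefficients vanish, $\Gamma^i_{\;jk}=0$, so the left-hand side is zero and we are left with $T^i_{\;jk}=-\tfrac12\lambda^i_{\;jk}$. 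Equivalently, one may substitute $\lambda^i_{\;j\bar k}=\lambda^{\bar\imath}_{\;j\bar k}=0$ from Proposition \ref{[g^{1, 0}, g^{0, 1}]=0} together with $N^l_{\;k}=0$ directly into the torsion formula \eqref{Torsion_ijk} and observe that every remaining summand collapses to the single term $-\tfrac12\lambda^i_{\;jk}$.

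With this identity in hand both implications are immediate. If $G$ is Abelian then $\lambda^i_{\;jk}=0$, so $T^i_{\;jk}=0$ and $F$ is K\"ahler-Finsler, hence K\"ahler-Berwald. Conversely, if $F$ is K\"ahler-Berwald then $T^i_{\;jk}=0$, which forces $\lambda^i_{\;jk}=0$; since the mixed structure constants already vanish by Proposition \ref{[g^{1, 0}, g^{0, 1}]=0}, the bracket on $\mathfrak{g}^{\mathbb{C}}$ is identically zero, so $\mathfrak{g}$ is Abelian and (for connected $G$) the group $G$ is an Abelian complex Lie group.

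There is no serious analytic obstacle here, since Theorem \ref{CBM} and Proposition \ref{[g^{1, 0}, g^{0, 1}]=0} have already done the heavy lifting; the argument is essentially bookkeeping with the structure constants. The only point demanding care is that the vanishing of $\lambda^i_{\;jk}$ alone annihilates only the $(1,0)$-part of the Lie bracket, so one must explicitly recall Proposition \ref{[g^{1, 0}, g^{0, 1}]=0} to dispose of the mixed constants before concluding that $\mathfrak{g}^{\mathbb{C}}$, and therefore $\mathfrak{g}$, is Abelian.
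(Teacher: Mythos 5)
Your proposal is correct and follows essentially the same route as the paper: both reduce the K\"ahler--Berwald property (given Theorem \ref{CBM}) to the vanishing of the torsion, establish $T^i_{\;jk}=-\tfrac12\lambda^i_{\;jk}$ on a complex Lie group, and conclude that this vanishes iff $\mathfrak{g}$ is Abelian. Your alternative derivation of the torsion identity via \eqref{Gamma_ik-Gamma_ki} with $\Gamma^i_{\;jk}=0$, and your explicit appeal to Proposition \ref{[g^{1, 0}, g^{0, 1}]=0} and connectedness at the end, are just slightly more detailed versions of the same argument.
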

\begin{proof}
By \eqref{Torsion_ijk}, we have  $T_{\;jk}^i=-\frac{1}{2}\lambda_{\; jk}^i$. On the other hand, by Theorem \ref{CBM} and Definition \ref{K?hler And weakly K?hler}, $F$ is a left-invariant K\"ahler-Berwald metric iff $T_{\;jk}^i=0$, namely $\lambda_{\; jk}^i=0$. This completes the proof.
\end{proof}

\begin{remark}
	
Thus, a complex Lie group admitting a left-invariant K\"ahler-Berwald metric must be biholomorphically isomorphic, as a complex Lie group, to one of the following:

(1) The non-compact case: $ \mathbb{C}^n $ itself, with the standard additive group structure.

(2) The compact case: complex tori $ \mathbb{C}^n / \Gamma $, where $ \Gamma \subset \mathbb{C}^n $ is a lattice.

As complex Lie groups, both $\mathbb{C}^n$ and $\mathbb{C}^n / \Gamma$ admit complex Minkowski metrics such that they are  K\"ahler-Berwald metrics with vanishing holomorphic sectional curvatures.
The above theorem also shows the existence of a  K\"ahler-Berwald metric on a complex Lie group  imposes a very rigid algebraic condition: the  complex Lie group must be Abelian.
\end{remark}

%\bibliography{sn-bibliography}% common bib file
%% if required, the content of .bbl file can be included here once bbl is generated
%\input sn-article.bbl
%% Default %%
%\input sn-sample-bib.tex%

\end{document}